\newtheorem{proposition}{Proposition}[section]
\newtheorem{thm}[proposition]{Theorem}
\newtheorem{corollary}[proposition]{Corollary}
\newtheorem{lem}[proposition]{Lemma}
\theoremstyle{definition}
\newtheorem{definition}[proposition]{Definition}
\newtheorem*{rep@theorem}{\rep@title}
\newcommand{\newreptheorem}[2]{%
\newenvironment{rep#1}[1]{%
\def\rep@title{#2 \ref{##1}}%
\begin{rep@theorem}}%
{\end{rep@theorem}}}
\newcommand{\B}{{B}}
\newcommand{\el}{\mathcal{L}}
\newcommand{\LO}{\mathbb{L}_0}
\newcommand{\Z}{\mathbb{Z}}
\newcommand{\C}{\mathcal{C}}
\newcommand{\x}{\times}
\newcommand{\link}{L=K_1~\cup~\dots~\cup~K_m}
\newcommand{\linkprime}{L'=K_1'~\cup~\dots~\cup~K_m'}
\newcommand{\D}{\mathbb{D}}
\newcommand{\I}{[0,1]}
\newcommand{\bd}{\partial}
\newcommand{\F}{\mathcal{F}}
\newcommand{\N}{\mathbb{N}}
\newcommand{\Hom}{\text{Hom}}
\newcommand{\lk}{\operatorname{lk}}
\newcommand{\Arf}{\operatorname{Arf}}
\begin{document}

\title{Classification of links up to 0-solvability}

\author{Taylor E. Martin}
\address{Department of Mathematics\\
Sam Houston State University\
Huntsville, TX 77340 }
\email{taylor.martin@shsu.edu}
\thanks{}
\date{}

\begin{abstract}
The $n$-solvable filtration of the $m$-component smooth (string) link concordance group, as defined by Cochran, Orr, and Teichner, is a tool for studying smooth knot and link concordance that yields important results in low-dimensional topology. The focus of this paper is to characterize the set of 0-solvable links. We introduce a new equivalence relation on links called 0-solve equivalence and establish both an algebraic and a geometric classification of $\mathbb{L}_0^m$, the set of links up to 0-solve equivalence. We show that $\LO^m$ has a group structure isomorphic to the quotient $\F_{-0.5}^m/\F_0^m$ of concordance classes of string links and classify this group, showing that $$\LO^m \cong \F_{-0.5}^m/\F_0^m \cong \Z_2^m \oplus \Z^{m \choose 3} \oplus \Z_2^{m \choose 2}.$$ Finally, using results of Conant, Schneiderman, and Teichner, we show that 0-solvable links are precisely the links that bound class 2 gropes and support order 2 Whitney towers in the 4-ball.

\end{abstract}

\maketitle

\section{Introduction}\label{S:Intro}

Many open problems in low-dimensional topology center around the study of 4-manifolds. In the smooth category, even compact simply-connected 4-manifolds are unclassified. In the 1950's, Fox and Milnor introduced the notion of \emph{link concordance} a 4-dimensional equivalence relation; studying link concordance can contribute greatly to the understanding of 4-manifolds. In 1966, Fox and Milnor showed that concordance classes of knots form an abelian group called the \emph{knot concordance group}, $\C$. The knot concordance group has been well-studied since its introduction, but its structure is complicated and remains largely unknown. Here, we study the \emph{(string) link concordance group}, $\C^m$, where $m$ is the number of link components; when $m=1$, this is the knot concordance group. 

Cochran, Orr, and Teichner introduced the notion of \emph{n-solvability} of ordered, oriented links in 2003 \cite{COT}. This definition can be extended to string links; a string link is $n$-solvable if and only if its closure is an $n$-solvable link. Harvey gives a precise definition in \cite{H}. Cochran, Orr, and Tehichner's \emph{n-solvable filtration} $\F_n^m, n \in \frac{1}{2}\N$, is an infinite sequence of nested subgroups of $\C^m$ that can be thought of as an algebraic approximation to a link being slice \cite{COT}.

While the $n$-solvable filtration of $\C^m$ has been studied since its inception, many of the existing results discuss quotients of the filtration. Harvey shows that the quotients ${\F_n^m}/{\F_{n+1}^m}$ contain an infinitely generated subgroup \cite{H}. Cochran and Harvey showed that the quotients ${\F_n^m}/{\F_{n.5}^m}$ contain an infinitely generated subgroup \cite{CH}. Cochran, Harvey, and Leidy \cite{CHL} and Cha \cite{Cha} have made notable contributions in this area. In their seminal work on the $n$-solvable filtration, Cochran, Orr, and Teichner classified 0-solvable knots by showing that a knot is 0-solvable if and only if it has Arf invariant zero. Kauffman showed a knot has Arf invariant zero if and only if it is band-pass equivalent to the unknot \cite{Kauff}. As a corollary of the main result of this paper, we provide a parallel characterization of when a link is 0-solvable.

Let $\link$ be an ordered, oriented, $m$-component link such that the pairwise linking numbers $\lk{(K_i,K_j)}$ vanish. We call this set of links $\el^m$. We will establish an equivalence relation $\sim_0$ on $\el^m$ called 0-solve equivalence and classify the set $\LO^m := \el^m / \sim_0$ of links up to 0-solve equivalence both algebraically, using Milnor's invariants $\bar{\mu}(ijk)$ and $\bar{\mu}(iijj)$ as well as the $\Z_2$-valued Arf invariant of individual components, and geometrically, using the band-pass move, pictured in Figure \ref{fig:BP}. We establish a relationship among these conditions.

\begin{reptheorem}{thm:Main}
For two ordered, oriented $m$-component links $\link$ and $\linkprime$ with vanishing pairwise linking numbers, the following conditions are equivalent:
\begin{enumerate}
\item $L$ and $L'$ are 0-solve equivalent,
\item $L$ and $L'$ are band-pass equivalent,
\item $\Arf(K_i) = \Arf(K_i')$ \\
$\bar{\mu}_L(ijk) = \bar{\mu}_{L'}(ijk)$\\
$\bar{\mu}_L(iijj) \equiv \bar{\mu}_{L'}(iijj)$ $(\bmod 2)$ for all $i,j,k \in \{1, \dots, m\}$.
\end{enumerate}
\end{reptheorem}

As a corollary, we characterize when a link is 0-solvable.

\begin{repcorollary}{cor:0solvable}
For an ordered, oriented $m$-component link $\link$, with vanishing pairwise linking numbers, the following conditions are equivalent:
\begin{enumerate}
\item $L$ is 0-solvable,
\item $L$ is 0-solve equivalent to the $m$-component unlink,
\item $L$ is band-pass equivalent to the $m$-component unlink,
\item $\Arf(K_i) = 0$ \\
$\bar{\mu}_L(ijk) = 0 $\\
$\bar{\mu}_L(iijj) \equiv 0$ $(\bmod 2)$.\\
\end{enumerate}
\end{repcorollary}

We then classify the set $\LO^m$ of links up to 0-solve equivalence by giving an algorithm for choosing representatives of each 0-solve equivalence class of links and show that under band sum, $\LO^m$ has a group structure. 

\begin{repcorollary}{for:classify}
The set $\LO^m = \el^m / \sim_0$ forms an abelian group under band sum such that, for each $m$, the natural map $\frac{\F_{-0.5}^m}{\F_0^m} \rightarrow \LO^m$ induces an isomorphism, and

$$ \LO^m \cong \frac{\F_{-0.5}^m}{\F_0^m} \cong \Z_2^m \oplus \Z^{m \choose 3} \oplus \Z_2^{m \choose 2}$$
\end{repcorollary}

The work of Conant, Schneiderman, and Teichner \cite{Survey} broadens the scope of Theorem \ref{thm:Main} to include applications to the study of gropes and Whitney towers, which are geometric objects that are used in the study of 4-manifolds and are defined in section 5. 

\begin{repcorollary}{cor:CST}
For an ordered, oriented, $m$-component link $L$, the following are equivalent.
\begin{enumerate}
\item $L$ is 0-solvable.
\item $L$ bounds disjoint, properly embedded gropes of class 2 in $B^4$.
\item $L$ bounds properly immersed disks admitting an order 2 Whitney tower in $B^4$.
\end{enumerate}
\end{repcorollary}

\section{Preliminaries}

An $m$-component \emph{link} is an embedding $f: \bigsqcup_{i=1}^m S^1 \rightarrow S^3$ of $m$ disjoint circles into the $3$-sphere. We consider links up to smooth isotopy and denote an ordered link by $\link$, where each link component is oriented. Two ordered, oriented $m$-component links, $\link$ and $\linkprime$, are \emph{concordant} if they cobound $m$ disjoint, smoothly embedded annuli in $S^3 \x \I$ as in figure \ref{fig:conc}. Links which are concordant to the unlink are called \emph{slice} links.

\begin{figure}[h]
\centering
\includegraphics[scale=.3]{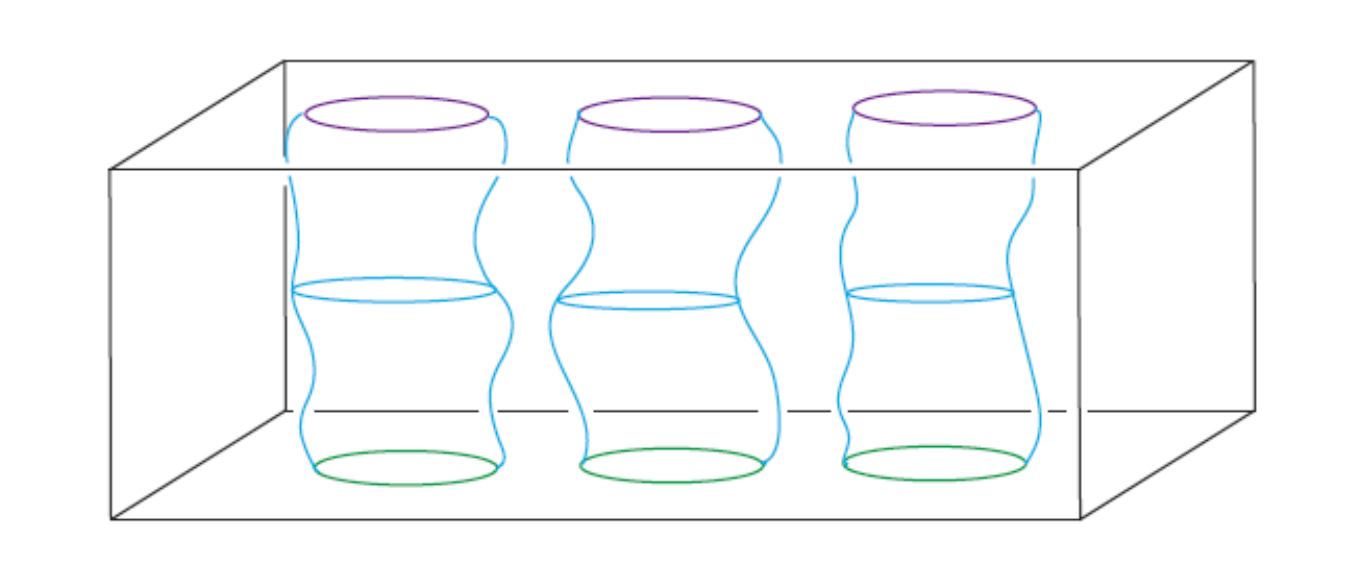}
\put(-235,40){$S^3 \x [0,1]$}
\put(-10,10){$L \subset S^3 \x \{0\}$}
\put(-10,70){$L' \subset S^3 \x \{1\}$}
\caption{Link Concordance}
\label{fig:conc}
\end{figure}

The concordance classes of knots form an abelian group under the operation of connected sum, but connected sum is not well-defined for links. However, this problem can be avoided by considering concordance classes of $m$-component \emph{string links}, which do form a group under the operation of stacking. An $m$-component (pure) $n$-\emph{string link} $D$, as defined by LeDimet in \cite{Dimet}, can be viewed as a generalization of an $m$-strand pure braid where we allow the strands to knot. Given any $m$-component string link $D$, we form the \emph{closure} of $D$, denoted $\hat{D}$, by gluing the standard $m$-component trivial string link $\{p_i\}_{i=1}^m \x I$ to $D$ along its boundary as pictured in figure \ref{fig:SLclosure}. A link in $S^3$ can be made into a string link by cutting along a carefully chosen ball \cite{HL1}.

\begin{figure}[h]
\centering
\includegraphics[scale=.2]{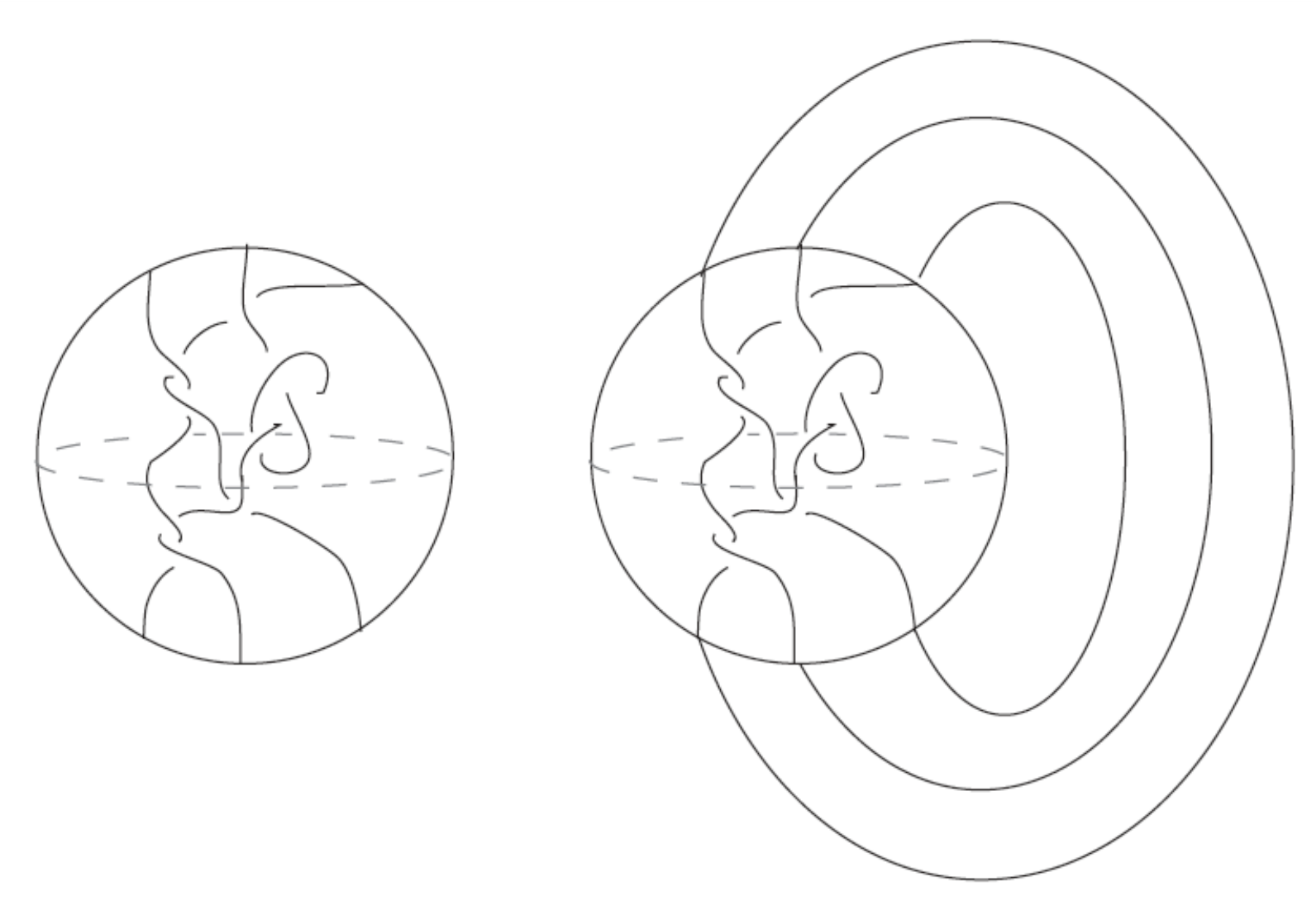}
\put(-165,50){$D$}
\put(5,50){$\hat{D}$}
\caption{A $3$-component string link $D$ and its closure $\hat{D}$}
\label{fig:SLclosure}
\end{figure} 

String links $D$ and $D'$ are \emph{concordant} if there exists a proper smooth submanifold $C$ of $\D^{3} \x \I$ such that $C$ is homeomorphic to $m$ disjoint copies of $\D^1 \x \I$, where $C \cap (\D^3 \x \{0\}) = D$, $C \cap (\D^3 \x \{1\}) = D'$, and $C \cap (S^2 \x \I) = U^0 \x \I$. Using this definition, we see that there is a natural extension of $C$ that gives a link concordance between the closures $\hat{D}$ and $\hat{D'}$. See \cite{Otto} for detailed definitions of string link concordance.

The set of concordance classes of $m$-component string links under \emph{stacking} form a group $\C^m$, called the \emph{string link concordance group} \cite{Dimet}. In the case of $m=1$, this is the knot concordance group. For $m \ge 2$, $\C^m$ is not abelian \cite{Dimet}. One tool to study the structure of this group, defined by Cochran, Orr, and Teichner \cite{COT}, is the \emph{n-solvable filtration}, $\{\F_n^m\}$: $$\{0\} \subset \dots \subset \F_{n+1}^m \subset \F_{n.5}^m \subset \F_n^m \subset \dots \subset \F_1^m \subset \F_{0.5}^m \subset \F_0^m \subset \C^m.$$ For $k \in \frac{1}{2}\N$, $\F_k^m$ is the collection of $k$-\emph{solvable} $m$-component links; the $n$-solvable filtration can be thought to ``approximate" sliceness as an $m$-component slice link is $n$-solvable for any $n$. We will focus on the first subgroup of this filtration, $\F_0^m$, the set of $0$-solvable $m$-component string links, and its counterpart for links, $\el_0^m$, the set of $m$-component, 0-solvable links. We give the definition of $n$-solvability as presented in \cite{Otto}.

\begin{definition}
An $m$-component link $\link$ is \emph{$n$-solvable} if the manifold $M_L$ obtained from performing 0-framed surgery on $L$ in $S^3$ bounds a compact, smooth $4$-manifold $W$ under the following conditions:

\begin{enumerate}
\item $H_1(M_L) \cong \Z^m$, and the map induced by inclusion, $i_*: H_1(M_L) \rightarrow H_1(W)$ is an isomorphism on the first homology.
\item $H_2(W)$ has a basis consisting of compact, connected, embedded, oriented surfaces $\{X_k,Y_k\}_{k=1}^r$ with trivial normal bundles, such that $X_k$ intersects $Y_k$ transversely, exactly once, with positive sign, and otherwise, the surfaces are disjoint. 
\item For each of the basis elements of $H_2(W)$, $\pi_1(X_k) \subset \pi_1(W)^{(n)}$ and $\pi_1(Y_k) \subset \pi_1(W)^{(n)}$, where $\pi_1(W)^{(n)}$ refers to the $n^{th}$ term of the derived series of the fundamental group.
\end{enumerate}
The 4-manifold $W$ is called an \emph{$n$-solution} for $L$. A link $L$ is $n.5$-solvable if it is $n$-solvable and for each $i$, $\pi_1(X_k) \subset \pi_1(W)^{(n+1)}$. A string link is $n$-solvable if and only if its closure is a $n$-solvable link. 
\end{definition}

We will use the notation $Q_W$ to denote the intersection form on the second homology of a 4-manifold $W$, and will will use $S \cdot T$ to indicate the signed count of intersections of embedded, oriented surfaces $S$ and $T$, possibly with boundary, in a $4$-manifold.

For a link $L$ to be 0-solvable, $H_1(M_L) \cong \Z^m$. This is true if and only if the pairwise linking numbers between components of $L$ vanish. Therefore, we will only consider links $\link$ such that $\lk(K_i, K_j)=0$ for $1 \le i < j \le m$. For convenience, we will denote this set of links by $\el^m$, and we will denote the collection of concordance classes of $m$-component string links with vanishing pairwise linking numbers as $\F_{-0.5}^m \subset \C^m$. 

In the 1950's, Milnor defined a classical family of link invariants called $\bar{\mu}$-invariants, denoted $\bar{\mu}_L(I)$, where $I=i_1i_2\dots i_k$ is a word of length $k$ and $i_j \in \{1, \dots, m \}$ refers to the $j^{th}$ link component of $L$ \cite{Mil1}, \cite{Mil2}. The integer $k$ is called the \emph{length} of the Milnor invariant. Milnor's invariants have some indeterminacy resulting from the choice of meridians of the link and so are only well-defined modulo the greatest common divisor of shorter length Milnor's invariants. Habegger and Lin show that this indeterminacy corresponds exactly to the choice of ways of representing a link as the closure of a string link \cite{HL1}. Milnor's invariants are concordance invariants \cite{Casson}. We will use $\bar{\mu}_L(iijj)$ and $\bar{\mu}_L(ijk)$, both of which are always well defined on the set $\el^m$ and can be thought of as higher order cup products. We give their definitions geometrically. Note that $i,j,$ and $k$ must be distinct.

The Milnor's invariants $\bar{\mu}_L(iijj)$ for a link $\link$ in $\el^m$ are also known as Sato-Levine invariants. We can compute $\bar{\mu}_L(iijj)$ by considering oriented Seifert surfaces $\Sigma_i$ and $\Sigma_j$ for $K_i$ and $K_j$ in the link exterior $S^3 - N(L)$. We may choose these surfaces in such a way that $\Sigma_i \cap \Sigma_j = \gamma \cong S^1$ \cite{Tim}. Then, we push the curve $\gamma$ off of one of the surfaces $\Sigma_i$ in the positive normal direction to obtain a new curve $\gamma^+$. The Sato-Levine invariant is $\lk(\gamma, \gamma^+)$; see figure \ref{fig:iijj} and is well-defined when the pairwise linking numbers of $L$ all vanish \cite{Tim}.

\begin{figure}[ht!]
\centering
\includegraphics[scale=.4]{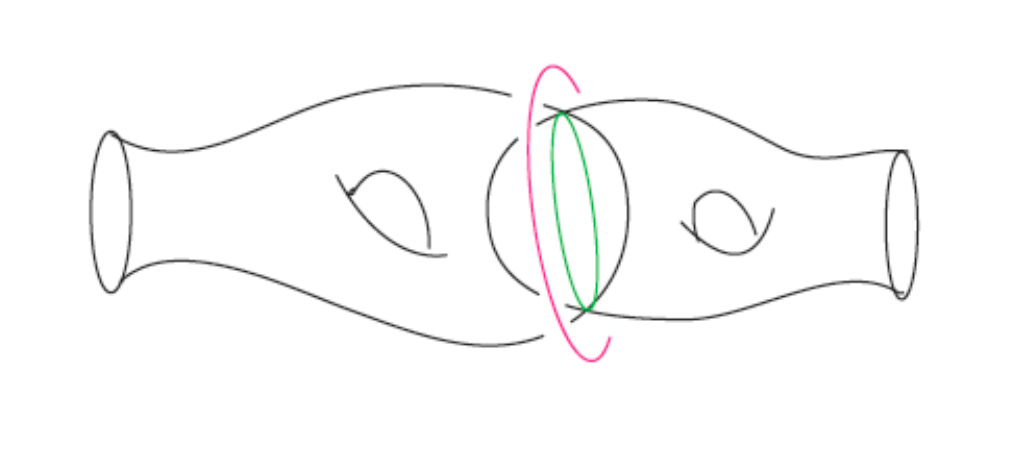}
\put(-160,70){$\Sigma_i$}
\put(-40,70){$\Sigma_j$}
\put(-100,85){$\gamma^+$}
\put(-80,80){$\gamma$}
\caption{Computing $\bar{\mu}_L(iijj)$}
\label{fig:iijj}
\end{figure}

The Milnor's invariants $\bar{\mu}_L(ijk)$ for a link $\link$ also have a geometric definition \cite{Tim}. Let $\Sigma_i, \Sigma_j$, and $\Sigma_k$ be oriented Seifert surfaces for $K_i, K_j$, and $K_k$ in $S^3 - N(L)$. The intersection $\Sigma_i \cap \Sigma_j \cap \Sigma_k$ is a collection of points which are given an orientation induced by the outward normal on each Seifert surface. The count of these points up to sign gives us $\bar{\mu}_L(ijk)$ \cite{Tim}. Figure \ref{fig:ijk} depicts this.

\begin{figure}[ht!]
\centering
\includegraphics[scale=.4]{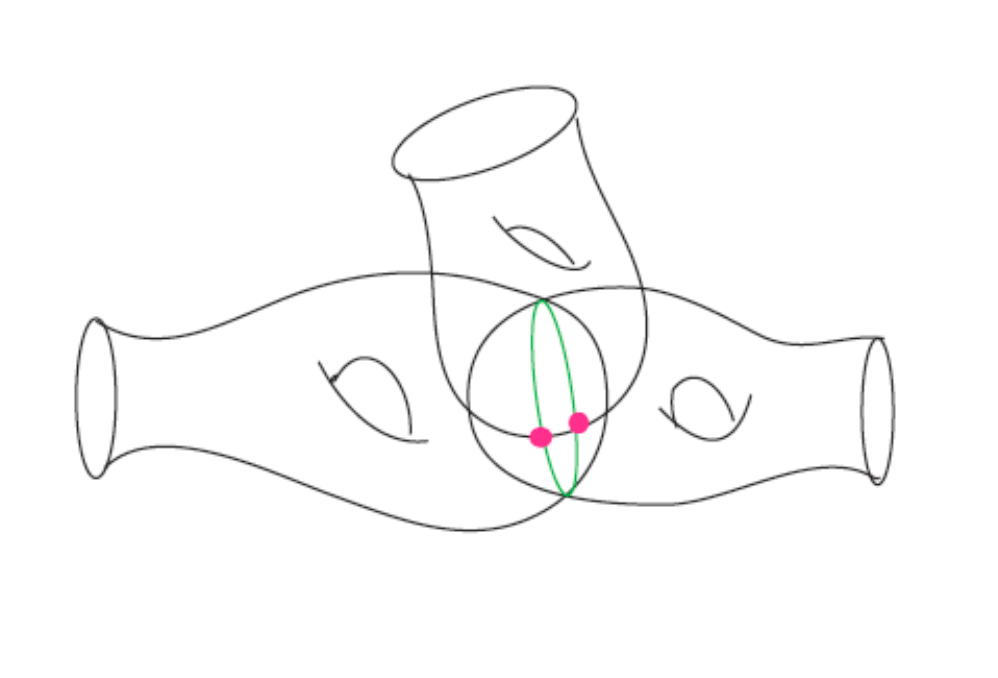}
\put(-160,30){$\Sigma_i$}
\put(-40,25){$\Sigma_j$}
\put(-70,90){$\Sigma_k$}
\caption{Computing $\bar{\mu}_L(ijk)$}
\label{fig:ijk}
\end{figure}

\begin{definition}
A \emph{band-pass move} on a link $\link$ is the local move pictured in figure \ref{fig:BP}. We require that both strands of each band belong to the same link component. Links $L$ and $L'$ are \emph{band-pass equivalent} if $L$ can be transformed into $L'$ through a finite sequence of band-pass moves and isotopy.

\begin{figure}[ht!]
\centering
\includegraphics[scale=.3]{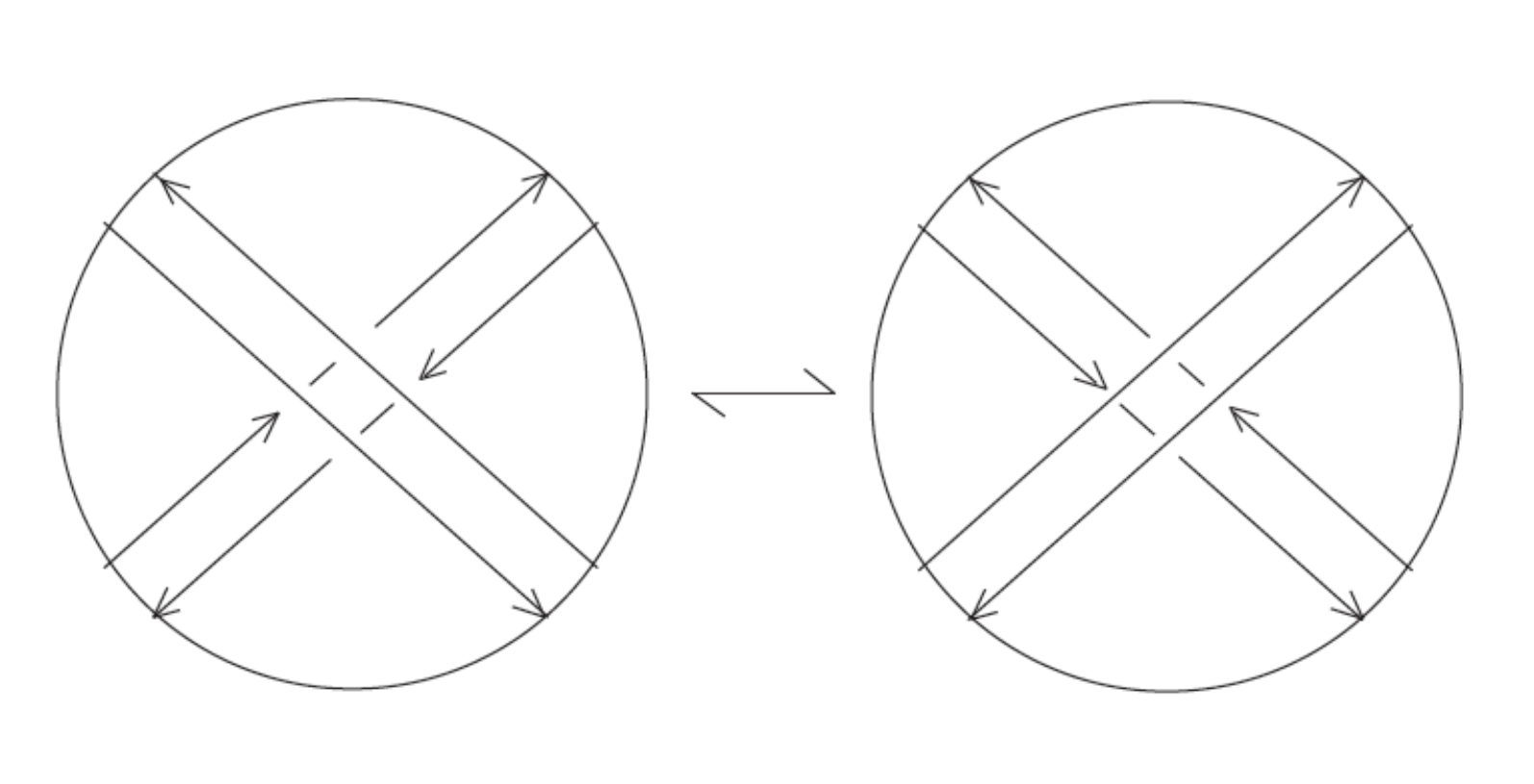}
\put(-220,70){$K_i$}
\put(-150,70){$K_j$}
\put(-98,70){$K_i'$}
\put(-27,70){$K_j'$}
\caption{A band-pass move}
\label{fig:BP}
\end{figure}
\end{definition}

\begin{definition}
A \emph{clasp-pass move} on a link $L$ is the local move in figure \ref{fig:CP}. Links $L$ and $L'$ are \emph{clasp-pass equivalent} if $L$ can be transformed into $L'$ through a finite sequence of clasp-pass moves and isotopy. Note that a clasp-pass move is a band-pass move. 
\begin{figure}[ht!]
\centering
\includegraphics[scale=.35]{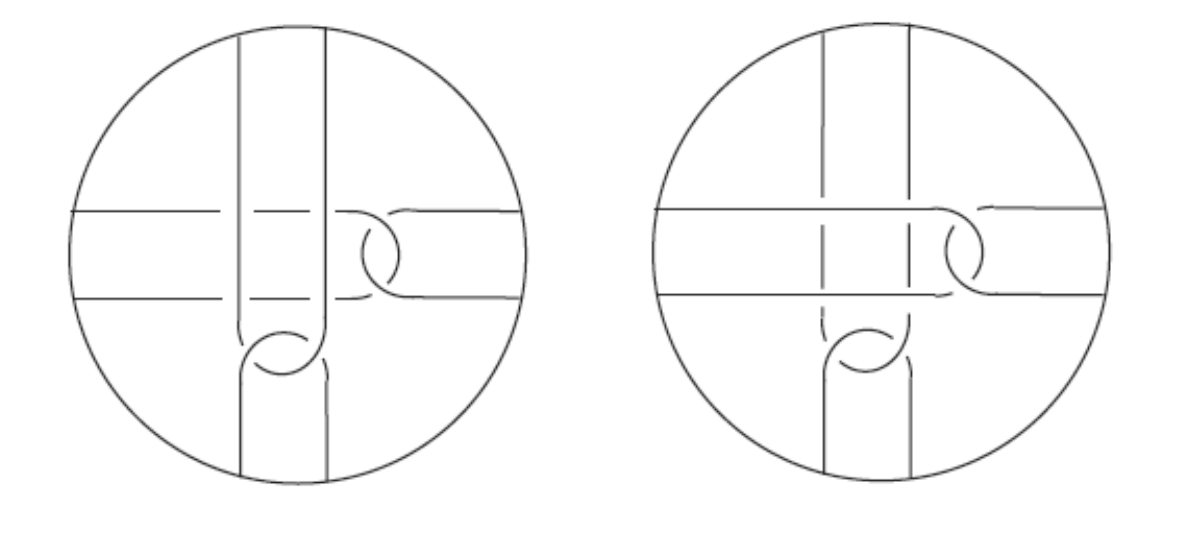}
\caption{A clasp-pass move}
\label{fig:CP}
\end{figure}
\end{definition}

\section{0-Solve Equivalence}

For an orientable 4-manifold $W^4$, a \emph{spin structure} on $W$ is a choice of trivialization of the tangent bundle $T_W$ over the 1-skeleton of $W$ that can be extended over the 2-skeleton. A manifold endowed with a spin structure is called a \emph{spin manifold}; $W$ is spin if and only if the second Stiefel-Whitney class $w_2(W) = 0$. For $W$ a smooth, closed 4-manifold such that $H_1(W)$ has no 2-torsion, $W$ is spin if and only if the intersection form $Q_W$ on $W$ is even (see \cite{Wild} section 4.3).

\begin{definition}
Suppose that $W$ is a 4-manifold such that $\bd W = M_L \bigsqcup -M_{L'}$, where $L$ and $L'$ are links. We define the \emph{closure} of $W$, which we denote $\hat{W}$, to be the closed 4-manifold that is obtained from $W$ by first attaching a 0-framed 2-handle to each meridinal curve in both $M_L$ and $M_{L'}$, as pictured in figure \ref{fig:lk}, and then attaching 4-handles to the $S^3$ and $-S^3$ boundary components. This forms the closed 4-manifold which we will call $\hat{W}$, as pictured in figure \ref{fig:closure}.

\begin{figure}[ht!]
\centering
\includegraphics[scale=.4]{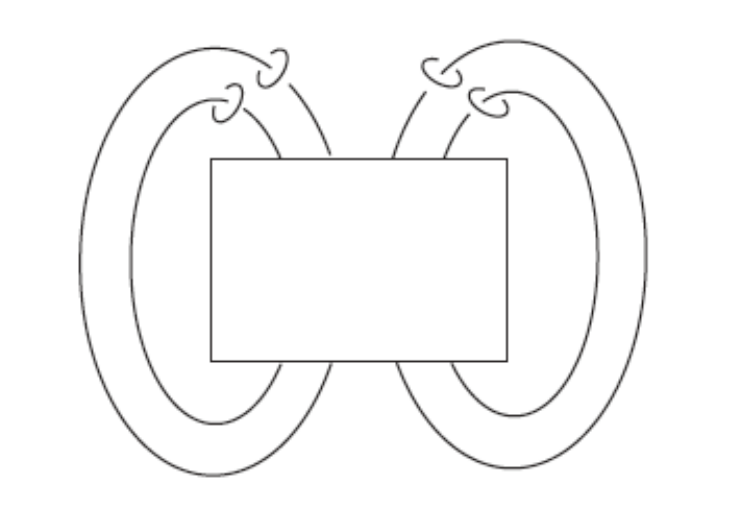}
\put(-77,50){L}
\put(-45,75){\tiny0}
\put(-65,95){\tiny0}
\put(-90,95){\tiny 0}
\put(-107,75){\tiny 0}
\put(-79,17){$\dots$}
\put(10,50){$= S^3$}
\caption{Closing off $W$ by attaching 2-handles along meridinal curves}
\label{fig:lk}
\end{figure}
 
\begin{figure}[ht!]
\centering
\includegraphics[scale=.4]{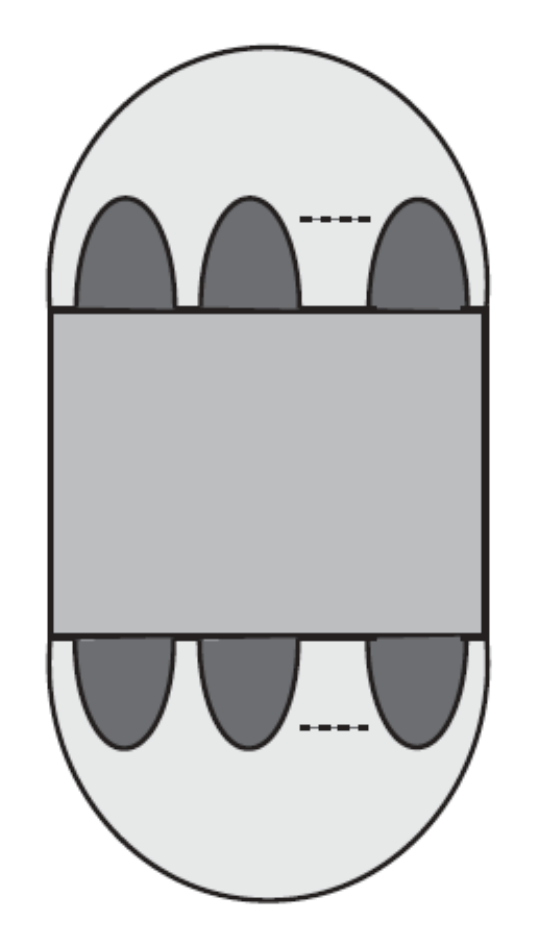}
\put(-60,90){$W$}
\put(-5,120){$M_L$}
\put(-8,60){$-M_{L'}$}
\caption{The closure of a 0-solve equivalence}
\label{fig:closure}
\end{figure}

\end{definition}

\begin{definition} \label{def:0se}
Let $L$ and $L'$ be ordered, oriented $m$-component links $\link$ and $\linkprime$ with vanishing pairwise linking numbers. We say $L$ is \emph{0-solve equivalent} to $L'$ if there exists a 4-manifold $W$ with $\bd W = M_L \bigsqcup -M_{L'}$ such that the following conditions hold:
\begin{enumerate}
\item The maps $i_*: H_1(M_L) \rightarrow H_1(W)$ and $j_*:H_1(-M_{L'}) \rightarrow H_1(W)$ induced by inclusion are isomorphisms such that $i_*(\mu_k) = j_*(\mu_k')$, where $\mu_k$ and $\mu_k'$ denote the meridians of the $k^{th}$ link components of $L$ and $L'$ respectively.
\item $\Z^{2r} \cong H_2(W, \bd W_-)$, where $\bd W_- = -M_{L'}$ and $\bd W_+ = M_L$, has a basis consisting of compact, oriented, embedded, connected pairs of surfaces $\{X_k,Y_k\}_{k=1}^r$ with trivial normal bundles such that the surfaces are disjoint, with the exception that, for each $k$, $X_k $ and $Y_k$ intersect transversely at exactly one point, oriented with positive direction. 
\item $\hat{W}$ is a spin 4-manifold. 
\end{enumerate}

This manifold $W$ is called a \emph{0-solve equivalence} between $L$ and $L'$. 
\end{definition}

Note that Cha gives a definition of $n$-solvable equivalence that is not equivalent to Definition \ref{def:0se} as it omits the spin condition. See \cite{Cha2} Remark 2.10.

Consider the long exact sequence of the pair $(W, \bd W_-)$ given below: 
$$\dots \rightarrow H_3(W, \bd W_-) \rightarrow H_2(\bd W_-) \stackrel{j_*}{\rightarrow} H_2(W) \twoheadrightarrow H_2(W,\bd W_-) \stackrel{0}{\rightarrow} H_1(\bd W_-) 
\stackrel{\cong}{\longrightarrow} H_1(W) \rightarrow \dots$$

Note that the map induced by inclusion, $j_*:H_1(\bd W_-) \rightarrow H_1(W)$ is an isomorphism and that $H_3(W, \bd W_-) \cong H^1(W, \bd W_+) \cong \Hom(H_1(W, \bd W_+), \Z) = 0$. Thus, $j_*: H_2(\bd W_-) \rightarrow H_2(W)$ is injective. We refer to $j_*(H_2(\bd W_-))$ simply by $H_2(\bd W_-)$. The short exact sequence obtained from the above long exact sequence splits, as $H_2(W, \bd W_-)$ is free. Thus, $H_2(W) = H_2(W, \bd W_-) \oplus j_*(H_2(\bd W_-))$ and for a 0-solve equivalence $W$, 

\begin{equation} \label{eq:1}
\frac{H_2(W)}{H_2(\bd W_-)} \cong H_2(W, \bd W_-). 
\end{equation}

\begin{proposition}\label{equivrel}
0-solve equivalence in an equivalence relation on $\el^m$.
\end{proposition}

\begin{proof}
\begin{enumerate}
\item To show reflexitivity, for an ordered, oriented link $L$ in $\el^m$, we consider the 4-manifold $W = M_L \x \I$, which we will show to be a 0-solve equivalence from $L$ to itself. The first two conditions of 0-solve equivalence are trivial. The generators of $H_2(\hat{W})$ are $\{\hat{\Sigma}_i, F_i\}_{i=1}^m$, where $\Sigma_i \subset S^3 - N(L) \subset M_L$ is a Seifert surface for the link component $K_i $, $\hat{\Sigma}_i$ is the closed surface $\Sigma_i \cup_{\ell_i} \D^2$ where $\ell_i$ is the 0-framing on $K_i$, and $F_i$ is the surface consisting of the cores of the $i^{th}$ added 2-handles attached to $\mu_i \x \I \subset M_L \x \I$. Each generator $\hat{\Sigma}_i$ and $F_i$ has self-intersection zero. Thus, the intersection form $Q_{\hat{W}}$ is even, and so $\hat{W}$ is spin.

\item To show symmetry, let $W$ be a 0-solve equivalence from link $L$ to $L'$ in $\el^m$. We reverse orientation on $W$ to obtain $W'$, where $\bd W'_- = -M_L$ and $\bd W'_+=M_{L'}$. We wish to show that $W'$ is a 0-solve equivalence from $L' $ to $L$. The first condition on the inclusion maps inducing isomorphisms on first homology is inherited from $W$, as $W$ and $W'$ differ only by orientation.

Then, since $H_1(W, \bd W_+)=0$,
\begin{align*}
H_2(W, \bd W_-) &\cong H^2(W, \bd W_+) \\
&  \cong \operatorname{Hom}(H_2(W, \bd W_+),\Z) \oplus \operatorname{Ext}(H_1(W, \bd W_+), \Z)\\
&\cong  H_2(W, \bd W_+) \\
\end{align*}

%
%





Also, $H_2(\bd W_+) \cong \Z^m$ is a free abelian group with basis $\{[\hat{\Sigma_i}]\}$ described as follows. We choose Seifert surfaces $\{\Sigma_i\}_{i=1}^m \subset S^3-L$ for link components $K_i$ with agreeing orientation. Let $\hat{\Sigma}_i$ be a closed surface in $\bd W_+$ defined by adding a disk to the preferred longitude of $K_i$. Similarly, $H_2(\bd W_-) \cong \Z^m$ is a free abelian group on basis $\{[ \Sigma'_i]\}_{i=1}^m$, where $\Sigma'_i \subset S^3-L'$ is a Seifert surface for link component $K_i'$ with agreeing orientation. Again, $\hat{\Sigma'_i}$ is a  closed surface in $\bd W_-$ defined by adding a disk to the preferred longitude of $K'_i$. Since the classes $[\hat{\Sigma_i}]$ and $[\hat{\Sigma'_i}]$ are carried by $\bd W$, the intersection form vanishes on their span in $H_1(W)$. Further, $\hat{\Sigma_i}$ and $\hat{\Sigma'_i}$ are disjoint from  $\{X_k,Y_k\}$, the basis elements of $H_2(W, \bd W_-)$ from Definition \ref{def:0se}. Since $i_*( [\hat{\Sigma_i}])=j_*([\hat{\Sigma_i']})$, the images of $i_*:H_2(\bd W_+) \rightarrow H_2(W)$ and $j_*:H_2(\bd W_-)\rightarrow H_2(W)$ are equal. From the following diagram $$H_2(W', \bd W'_-)  \stackrel{\cong}{\longleftarrow} \frac{H_2(W)}{i_*(H_2(\bd W_+))} = \frac{H_2(W)}{j_*(H_2(\bd W_-))} \stackrel{\cong}{\longrightarrow} H_2(W, \bd W_-).$$ 
since $\{[X_k],[Y_k]\}$ forms a basis for $H_2(W, \bd W_-)$, it also forms a basis for $H_2(W', \bd W'_-)$, as desired. Lastly, we observe that $\hat{W'}$ is $\hat{W}$ with opposite orientation. As $\hat{W}$ is spin, so is $\hat{W'}$. 

\item To show transitivity, let $W$ be a 0-solve equivalence from link $L$ to link $L'$ and let $W'$ be a 0-solve equivalence from link $L'$ to $L''$, where $L, L',$ and $L''$ are ordered, oriented $m$-component links with vanishing pairwise linking numbers. Let $V = W \cup_{M_{L'}} W'$. We wish to show that $V$ is a 0-solve equivalence from $L$ to $L''$. The first condition of 0-solve equivalence follows from the fact that $W$ and $W'$ are 0-solve equivalences; $H_1(\bd W_+) \cong H_1(W) \cong H_1(\bd W_-) = H_1(\bd W'_+) \cong H_1(W') \cong H_1(\bd W'_-)$, where each isomorphism is induced by inclusion. Thus, a Mayer-Vietoris argument shows that $H_1(V) \cong H_1(W)$ and $H_1(V) \cong H_1(W')$. The Mayer-Vietoris sequence for $V$ is:
$$\rightarrow H_3(V) \rightarrow H_2(W \cap W') \stackrel{(i_*,-j_*)}{\rightarrow} H_2(W) \oplus H_2(W') \twoheadrightarrow H_2(V) \stackrel{0}{\rightarrow} H_1(W \cap W') \rightarrow.$$

This tells us that $$H_2(V) \cong \frac{H_2(W) \oplus H_2(W')}{(i_*,-j_*)(H_2(W \cap W'))}.$$ Using the splittings $H_2(W) = H_2(W, \bd W_-) \oplus j_*(H_2(\bd W_-))$ and  $H_2(W') = H_2(W', \bd W'_+) \oplus j_*(H_2(\bd W'_+))$, we observe that the induced map $(i_*,-j_*):H_2(W \cap W') \rightarrow H_2(W) \oplus H_2(W')$ acts by $(i_*,-j_*)(a) = ((0,a),(0,-a))$. From this, we deduce that $H_2(V) \cong H_2(W, \bd W_-) \oplus H_2(W')$. 

Next, we consider the long exact sequence of the pair $(V, \bd V_-)$. 
$$\rightarrow H_3(V, \bd V_-) \rightarrow H_2(\bd V_-) \rightarrow H_2(V) \twoheadrightarrow H_2(V, \bd V_-) \stackrel{0}{\rightarrow} H_1(\bd V_-) \stackrel{\cong}{\rightarrow} H_1(V) \rightarrow.$$ This tells us that $H_2(V, \bd V_-) \cong \frac{H_2(V)}{i_*(H_2(\bd V_-))}$. Coupled with the splitting $H_2(W') = H_2(W', \bd W'_-) \oplus j_*(H_2(\bd W'_-))$ and the fact that $\bd V_- = \bd W'_-$, we have that 

\begin{align*}
H_2(V, \bd V_-) \cong & \frac{H_2(W, \bd W_-) \oplus H_2(W', \bd W'_-) \oplus H_2(\bd W'_-)}{i_*(H_2(\bd V_-))}\\
& \cong \frac{H_2(W, \bd W_-) \oplus H_2(W', \bd W'_-) \oplus H_2(\bd W'_-)}{i_*(H_2(\bd W'_-))}\\
& \cong H_2(W, \bd W_-) \oplus H_2(W', \bd W'_-).\\
\end{align*}

This isomorphism tells us that the rank of $H_2(V, \bd V_-)$ is $2r+2r'$, where $2r$ and $2r'$ are the ranks of $H_2(W, \bd W_-)$ and $H_2(W', \bd W'_-)$ respectively. Using the inclusion-induced isomorphisms given in equation \ref{eq:1}, we can consider the set $\{X_k, Y_k\}_{k=1}^r \cup \{X_k', Y_k'\}_{k=1}^{r'}$ as surfaces in $(V,\bd V_-)$ which are linearly independent and primitive because they have intersection duals. Because this collection has full rank, we see that $H_2(V, \bd V_-)$ has the desired basis in Definition \ref{def:0se}.


 Finally, we must show that $\hat{V}$ is a spin manifold. We show that the basis elements of $H_2(\hat{V})$ have even self-intersection. We know that the closures $\hat{W}$ and $\hat{W}'$ are spin manifolds. The generators of $H_2(\hat{W})$ are the surfaces $\{X_k, Y_k\}$ with self-intersection zero, capped Seifert surfaces for each link component, $\{\hat{\Sigma_k}\}$, and capped surfaces $\{\hat{S_k}\}$, where $S_k$ is an embedded surface in $W$ realizing the trivial homology class $[i_*(\mu_k)- j_*(\mu_k')]$. Similarly, the generators of $H_2(\hat{W'})$ are the surfaces $\{X_k', Y_k'\}$ with self-intersection zero, capped Seifert surfaces for each link component, $\{\hat{\Sigma_k'}\}$, with self-intersection zero, and capped surfaces $\{\hat{S_k'}\}$, where $S_k'$ is an embedded surface in $W'$ realizing the trivial homology class $[i_*(\mu_k')- j_*(\mu_k'')]$. A basis for $H_2(\hat{V})$ is the set  $\{X_k,Y_k\} \cup \{X_k',Y_k'\} \cup  \{\hat{\Sigma_k'}\} \cup \{\hat{T_k}\} $ where $\hat{T_k}$ is the capped surface $T_k = S_k \cup_{\mu_k'} S_k'$. Since the caps have no self-intersection due to the framing, the self intersection $\hat{T_k} \cdot \hat{T_k} = S_k \cdot S_k + S_k' \cdot S_k'$, a signed count of triple points. Because $\hat{W}$ and $\hat{W'}$ are spin and have no 2-torsion, $S_k \cdot S_k$ and $S_k' \cdot S_k'$ are both even. Therefore, $\hat{V}$ has even intersection form and is spin.

\end{enumerate}
\end{proof}

\begin{proposition}\label{prop:0solv}
If $L \sim_0 L'$ via a 0-solve equivalence $W$, and if $V$ is a 0-solution for $L'$, then $L$ is 0-solvable, and $V' = W \bigsqcup_{M_{L'}} V$ is a 0-solution for $L$. 
\end{proposition}

\begin{proof}

\begin{figure}[ht!]
\centering
\includegraphics[scale=.3]{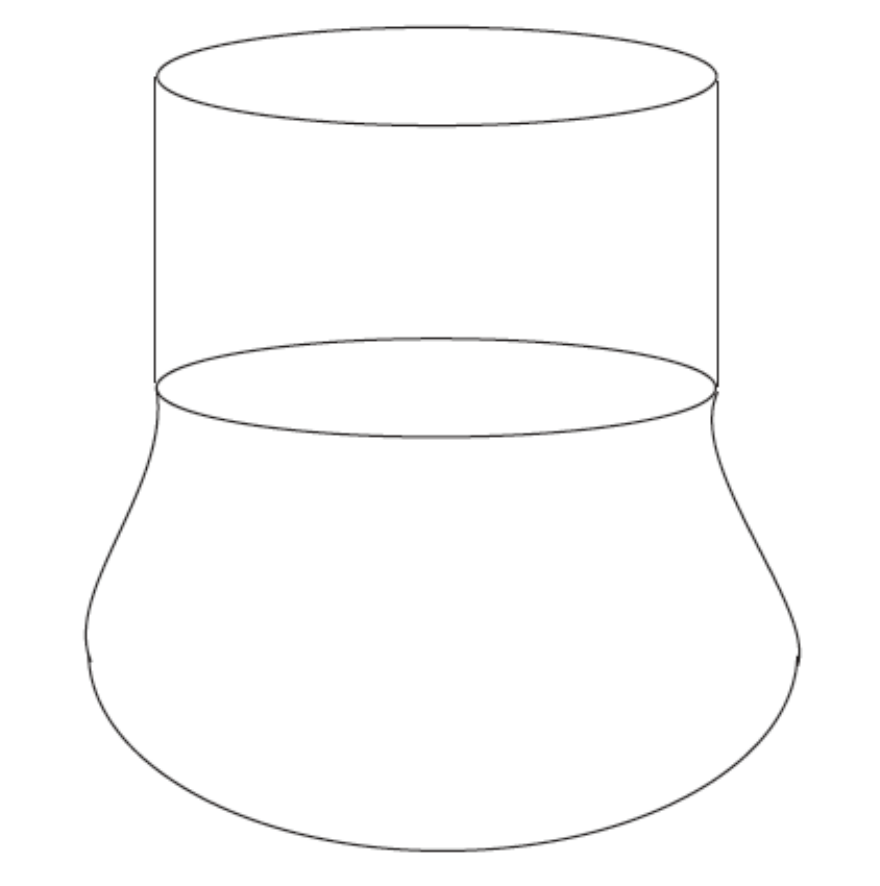}
\put(-70,40){$V$}
\put(-70,90){$W$}
\put(-130,70){$V'$}
\put(-20,110){$M_L$}
\put(-20,70){$M_{L'}$}
\caption{A 0-solution for $L$}
\end{figure}
The first condition of 0-solvability, that $i_*:H_1(M_L) \rightarrow H_1(V')$ is an isomorphism, follows from the fact that $W$ is a 0-solve equivalence and that $V$ is a 0-solution; $H_1(M_L) \cong H_1(W) \cong H_1(M_{L'}) \cong H_1(V)$ where each map is induced by inclusion. A Mayer-Vietoris computation shows that $H_1(V') \cong H_1(W)$.

We know that $H_2(V)$ has basis consisting of surfaces $\{X_k,Y_k\}_{k=1}^r $ that intersect pairwise once, transversely. We also know that $H_2(W, \bd W_-)$ has basis of surfaces $\{X_k',Y_k'\}_{k=1}^{r'}$ that intersect pairwise once transversely. We will show that $H_2(V')$ has basis $\{X_k,Y_k\} \cup \{X_k',Y_k'\}$. 
We consider the Mayer-Vietoris sequence for $V'$.
$$\rightarrow H_3(V') \rightarrow H_2(W \cap V) \stackrel{(i_*,-j_*)}{\rightarrow} H_2(W) \oplus H_2(V)  \rightarrow H_2(V') \stackrel{0}{\rightarrow} H_1(W \cap V) \rightarrow.$$ 

Similarly to in Proposition \ref{equivrel}, 
\begin{align*}
H_2(V') & \cong  \frac{H_2(W) \oplus H_2(V)}{(i_*,-j_*)(H_2(W \cap V))}\\
& \cong  \frac{H_2(W, \bd W_-) \oplus H_2(\bd W_-)\oplus H_2(V)} {(i_*,-j_*)(H_2(W \cap V))}\\
&  \cong  H_2(W, \bd W_-) \oplus H_2(V). 
\end{align*}
Note that this isomorphism shows that the rank of $H_2(V')$ is $2r+2r'$, where $2r$ and $2r'$ are the ranks of $H_2(V)$ and $H_2(W,\bd W_-)$, respectively. Using the inclusion induced maps and equation \ref{eq:1}, we consider the set $\{X_k,Y_k\}_{k=1}^r \cup \{X'_k,Y'_k\}_{k=1}^{r'}$. In $V'$, these surfaces are oriented and embedded, have trivial normal bundles, and are disjoint, with the exception that for each $k$, $X_k$ and $Y_k$ intersect each other exactly once and $X'_k$ and $Y'_k$ intersect each other exactly once. Since this collection has full rank for $H_2(V')$, we can say that it is a basis, and so $V'$ is a 0-solution for $L$.
\end{proof}

\begin{proposition}\label{prop:sublinks}
For $\link$ and $\linkprime$ two $m$-component 0-solve equivalent links with vanishing pairwise linking numbers, the corresponding $k$-component sublinks $J = K_{i_1} \cup \dots \cup K_{i_k}$ and $J' = K'_{i_1} \cup \dots \cup K'_{i_k}$, where $i_j = i_j' \in \{1, \dots , m \}$ are also 0-solve equivalent.
\end{proposition}

\begin{proof}
Note that, by induction and reordering both links simultaneously, it suffices to show that the sublinks obtained by removing the first component of $L$ and $L'$ are $0$-solve equivalent. Let $J = K_2 \cup \dots \cup K_m$ and $J' = K'_2 \cup \dots \cup K'_m$. 

Let $W$ be the $4$-manifold realizing the $0$-solve equivalence between $L$ and $L'$. We form a cobordism between $M_J$ and $M_{J'}$ by attaching two $0$-framed $2$-handles to $\partial W$ with attaching spheres $\mu_1$ and $\mu_1'$, the meridians of the link components $K_1$ and $K_1'$. This yields a new $4$-manifold, $V$. By performing handleslides, and since 0-framed surgery on the Hopf link is homeomorphic to $S^3$, we see that $\partial V = M_J \bigsqcup -M_{J'}$. We wish to show that $V$ is a $0$-solve equivalence between $J$ and $J'$. 

By definition, $i_*:H_1(M_L) \hookrightarrow H_1(W)$ and $j_*:H_1(M_{L'}) \hookrightarrow H_1(W)$ are both isomorphisms. Moreover, $H_1(M_L) \cong <\mu_1, \dots, \mu_m > \cong \mathbb Z^m$ and $H_1({M_L'}) \cong <\mu_1', \dots, \mu_m' > \cong \mathbb Z^m$. Considering the effect on homology of attaching a $2$-cell and the fact that $i_*(\mu_1) = j_*(\mu_1')$, we see that the maps induced by inclusion, $i_*: H_1(M_J) \rightarrow H_1(V)$ and $j_*: H_1(M_{J'}) \rightarrow H_1(V)$ are both isomorphisms, and it still holds that $i_*(\mu_k) = j_*(\mu_k)$ for $k \ne 1$. 

We now consider how $H_2(V)$ differs from $H_2(W)$. By attaching a 2-handle to $W$ along the curve $\mu_1$, we do not change second homology, as $<i_*(\mu_1)>$ is infinite in $H_1(W)$. Then, attaching a 2-handle along the curve $\mu_1'$ creates an infinite cyclic direct summand in $H_2(V)$ generated by $B$, the oriented surface in $V$ obtained by capping off the meridians $\mu_1$ and $\mu_1'$ in $W$ plus a surface in $W$ realizing the null-homology between $\mu_i$ and $\mu_i'$ as pictured in figure \ref{fig:B}.

\begin{figure}[h]
\centering
\includegraphics[scale=.3]{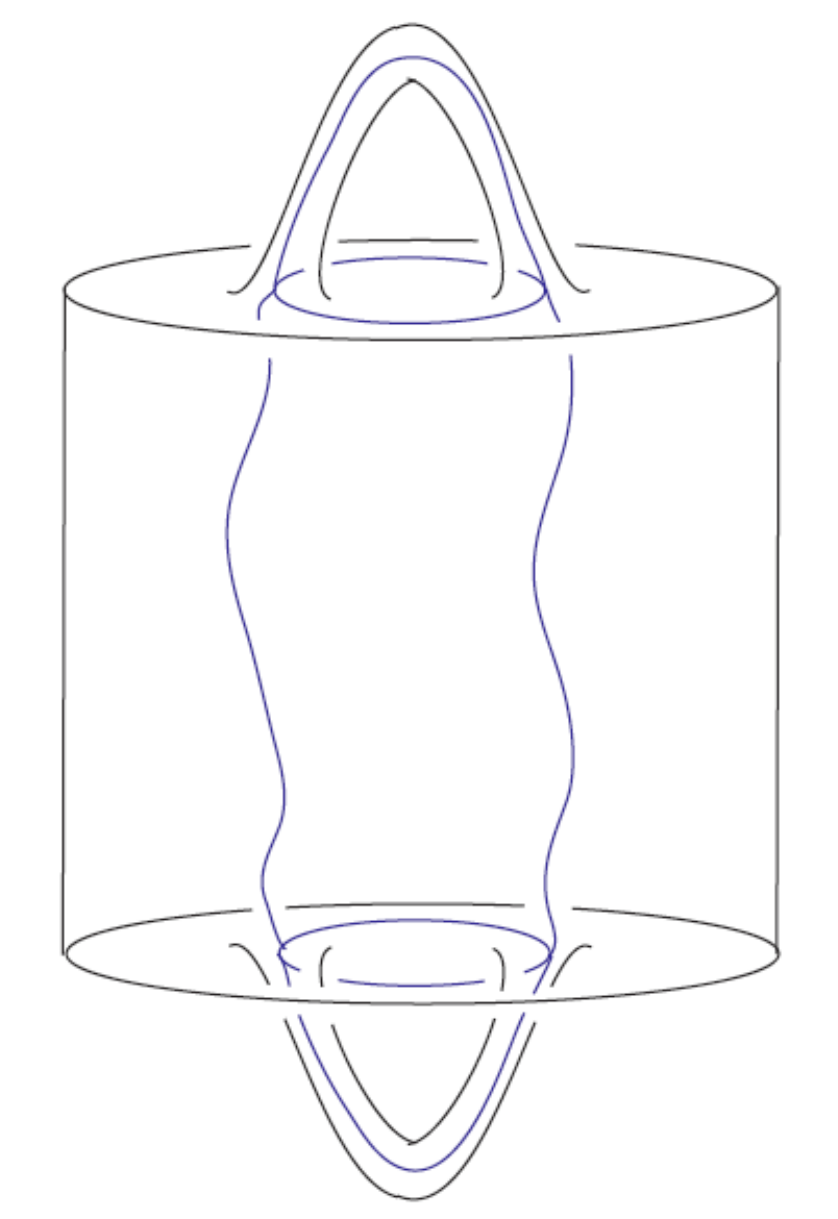}
\put(-30,110){$B$}
\put(-80,110){$\circlearrowleft$}
\caption{The surface $B \subset V$}
\label{fig:B}
\end{figure}

Next, we consider how $H_2(V, \bd V_-) \cong \frac{H_2(V)}{H_2(\bd V_-)}$ differs from $H_2(W, \bd W_-) \cong \frac{H_2(W)}{H_2(\bd W_-)}$. Let $\Sigma_1'$ be a Seifert surface for $K_1'$ in $S^3-N(L') \subset M_{L'}$, and let $S_1'$ be a closed oriented surface in $M_{L'}$ obtained by capping off $\Sigma_1'$ with the disk obtained from performing zero-framed surgery on $L'$. We notice that $S_1' \subset \bd W_-$  but $S_1'$ is not in $ \partial V_-$. From its construction, we can assume that $B$ is an annulus near $\mu_i$ and $\mu_i'$; therefore, we can assume that $B \cap M_{L'} = \mu_1'$, and so $B$ intersects $S_1'$ transversely at precisely one point. After possibly changing the orientation on $B$, we may assume that $B \cdot S_1' = +1$.  

From Poincare duality, we have an intersection form $H_2(V, \bd V_-) \x H_2(V, \bd V_+) \rightarrow \Z$ where $(S_1',B) \mapsto +1$. Therefore, the class $[S_1'] \in H_2(V, \bd V_-)$ is nontrivial, where $S_1'$ can be made disjoint from the generators of $H_2(W)$, and so $[S_1']$ is primitive in $H_2(V, \bd V_-)$. 

We have that $H_2(V, \bd V_-) \cong H_2(W, \bd W_-) \oplus \Z \oplus \Z$ where the extra homology is generated by $\{ [B], [S_1']\}$. We must show that $H_2(V, \bd V_-)$ has generators that intersect as in the definition of 0-solve equivalence. 

So far, we have established that the intersection matrix for $H_2(V, \bd V_-)$ looks like: 

\[
  \begin{blockarray}{ccc|cccc}
    & S_1' & B & L_1 & D_1 \dots & L_g & D_g  \\
    \begin{block}{c(cc|cccc@{\hspace*{5pt}})}
    S'_1 & 0 & 1& 0 & 0 \dots & 0&0 \\
    B & 1 & \star & \gamma_1 & \gamma_2 \dots & \gamma_{2g-1} & \gamma_{2g} \\
    \cline{1-7}
    L_1 & 0 & \gamma_1 &  \BAmulticolumn{4}{c|}{\multirow{4}{*}{$A$}}\\
   D_1 & 0 & \gamma_2  & &&&\\
    \vdots & \vdots & \vdots & &&&\\
  L_g & 0 &\gamma_{2g-1} & &&&\\
   D_g & 0 & \gamma_{2g} & & & & \\
    \end{block}
  \end{blockarray}
\]

Here, $A$ is the intersection matrix for $H_2(W, \bd W_-)$, $\{\gamma_i\}$ are integers representing the intersection of $B$ with the generators of $H_2(W, \bd W_-)$, and $\star$ is an integer representing the self-intersection $B \cdot B$. We first seek to show that, for some choice of $B$, we have $\star = 0$. 

Considering the way we constructed $V$, we observe that any self-intersection of $B$ must occur in the interior of $W$. Moreover, $B$ is a surface in $\hat{W}$, the closure of the 0-solve equivalence $W$. Because $\hat{W}$ is spin, we know that $B \cdot B \equiv 0 $ $(\bmod 2)$. We introduce a change of basis to find a surface $B_k$ that has trivial self-intersection. Suppose that $B \cdot B  = 2k$ where $k<0$. Then, let $S_1'^+$ be a push-off of the surface $S_1'$. Let $\alpha_1$ be an arc in $V$ from $B$ to $S_1'^+$ that does not intersect the other basis elements $\{X_i,Y_i\}$ of $H_2(V, \bd V_-)$. We define a new surface $B_1$ to be the result of performing ambient surgery along $\alpha_1$, a process referred to as ``tubing" as pictured in figure \ref{fig:Tubing}. In homology, $[B_1] = [B + S_1']$. Note that we can choose the arc so that the orientation on our new surface matches up with the original orientations on $B$ and $S_1'$. This new surface $B_1$ also intersects $S_1'$ exactly once transversely, and has self-intersection $B_1 \cdot B_1 = B \cdot B + S_1' \cdot S_1' + 2(B \cdot S_1') = B \cdot B +2$. Since $B \cdot B = 2k$, by repeating this process $k$ times, we will have a surface $B_k$ that intersects $S_1'$ exactly once and has self intersection zero. Note that, if $B \cdot B = 2k$ where $k >0$, we would let $-S_1'$ be the surface $S_1'$ with opposite orientation. We would then let $\alpha_1$ be an arc from $B$ to $-S_1'^+$ and we would then define $B_1$ as above. Under these conditions, the homology class $[B_1] = [B - S_1']$ and $B_1 \cdot B_1 = B \cdot B + 2(B \cdot -S_1') = B \cdot B - 2$. Therefore, we have that $[B_k] = [B \pm kS_1']$, and we use this change of basis for $H_2(V, \bd V_-)$, replacing $B$ with $B_k$. 

\begin{figure}[ht!]
\centering
\includegraphics[scale=.35]{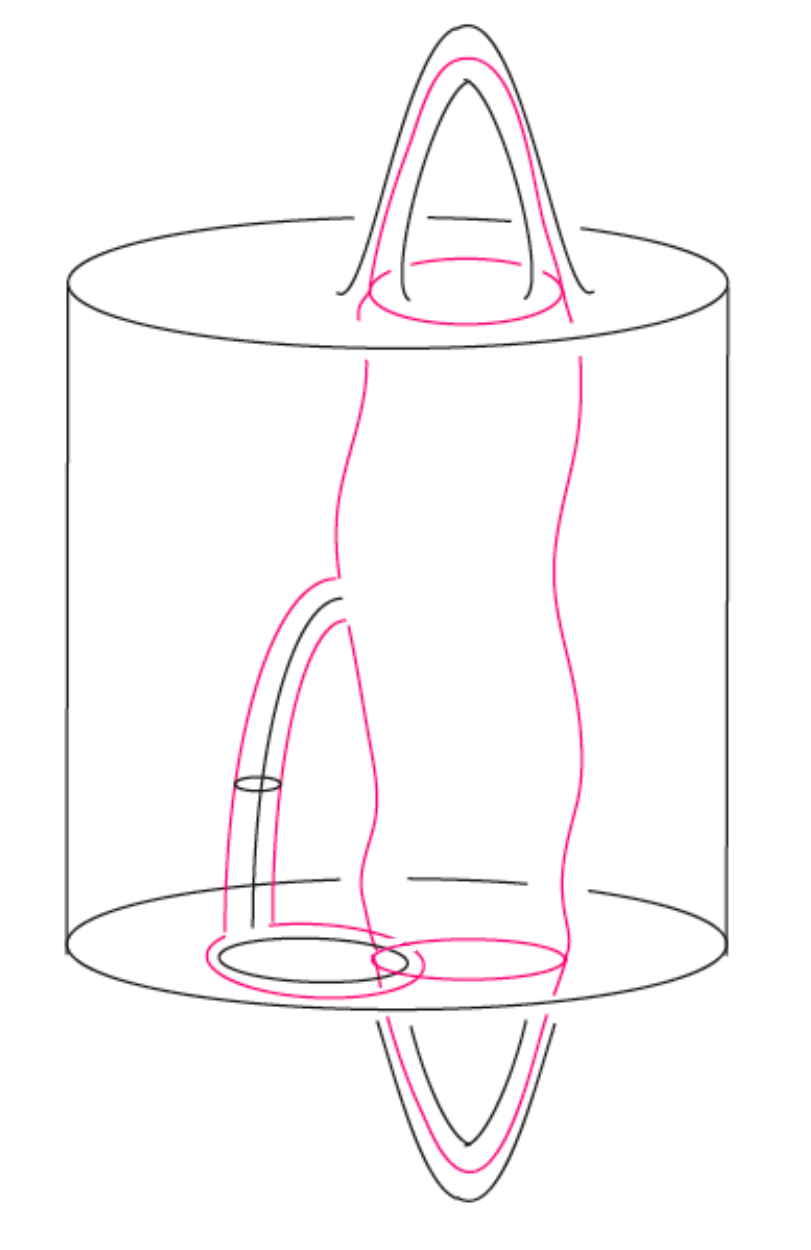}
\put(-100,130){$B_1$}
\caption{The new generator, $B_1$}
\label{fig:Tubing}
\end{figure}

Next, we consider the intersection of $B_k$ with the original generators $\{X_i,Y_i\}_{i=1}^r$ of $H_2(W, \bd W_-)$. Suppose that $B_k$ intersects some basis element $\Lambda \in \{X_i,Y_i\}$ nontrivially. Since $\Lambda$ can be chosen to be disjoint from $S_1'$, we employ the same methods as above. We take an arc $\alpha_1$ from $\Lambda$ to $\pm S_1'$ that doesn't intersect $B_k$ or the other generators and we consider the surface $\Lambda_1$, the result of performing ambient surgery along $\alpha_1$. Then, $\Lambda_1 \cdot B_k = \Lambda \cdot B_k \pm 1$. 

By repeating this process enough times, we come up with a basis $\{S_1', B_k, \Lambda_1, \dots, \Lambda_{2r}\}$ of $H_2(V, \bd V_-)$ with the intersection matrix 

\begin{center}
$
\left(
\begin{array}{ccccccc}
0 & 1 & 0 &0& \dots & 0&0 \\
1&0&0&0&\dots &0&0 \\
0&0&0&1& \dots &0&0 \\
0&0&1&0& \dots & 0&0\\
\vdots &\vdots &\vdots &\vdots &\dots&\vdots &\vdots \\
0&0&0&0& \dots &0&1 \\
0&0&0&0& \dots &1&0 \\
\end{array}
\right)
$
\end{center}
\end{proof}

These surfaces generate $H_2(V, \bd V_-)$ and they have the appropriate algebraic intersections. We can modify these surfaces by tubing along a Whitney arc to get a set of generators with the desired geometric intersection.

Finally, we note that $\hat{V}$, the closure of $V$, and $\hat{W}$, the closure of $W$ are the same manifold by construction. Since $\hat{W}$ is spin, then $\hat{V}$ is spin, and so $V$ is a 0-solve equivalence between $J$ and $J'$.

\section{Classification of Links up to 0-Solve Equiavlence}

We now give a classification of links up to 0-solve equivalence. As a corollary, we give necessary and sufficient conditions for a link to be 0-solvable. These results are consequences of the following theorem, which relates the condition of 0-solve equivalence to band-pass equivalence and to the Arf and Milnor's invariants. 

\begin{thm}\label{thm:Main}
For two ordered, oriented $m$-component links $\link$ and $\linkprime$ with vanishing pairwise linking numbers, the following conditions are equivalent:
\begin{enumerate}
\item $L$ and $L'$ are 0-solve equivalent,
\item $L$ and $L'$ are band-pass equivalent,
\item $\Arf(K_i) = \Arf(K_i')$ \\
$\bar{\mu}_L(ijk) = \bar{\mu}_{L'}(ijk)$\\
$\bar{\mu}_L(iijj) \equiv \bar{\mu}_{L'}(iijj) $ $(\bmod 2)$.
\end{enumerate}
\end{thm}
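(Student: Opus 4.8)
The plan is to prove Theorem~\ref{thm:Main} as a cycle of implications $(2)\Rightarrow(3)\Rightarrow(1)\Rightarrow(2)$, since $(2)\Rightarrow(3)$ and $(3)\Rightarrow(1)$ are the constructive directions and $(1)\Rightarrow(2)$ is the one demanding the most new machinery. For $(2)\Rightarrow(3)$, I would check that each of the three invariants is unchanged by a single band-pass move: the Arf invariant is a classical pass-move invariant, and $\bar\mu_L(ijk)$ and $\bar\mu_L(iijj)\ (\bmod 2)$ should be seen to be band-pass invariants by a local computation using the geometric (triple-Seifert-surface and Sato--Levine) definitions recalled in the Preliminaries — the bands in a band-pass move lie in a single component, so they can be pushed off the relevant Seifert surfaces, and the move changes intersections only in pairs that cancel mod the indeterminacy. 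Since all three quantities are also isotopy invariants, they are constant on band-pass equivalence classes, giving $(2)\Rightarrow(3)$.

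For $(3)\Rightarrow(2)$ (which, composed with $(2)\Rightarrow(1)$ handled below, yields $(3)\Rightarrow(1)$, but it is cleaner to prove $(3)\Rightarrow(2)$ directly) the plan is a normal-form argument: one produces an explicit list of ``model'' links — boundary-connected sums of Whitehead-type doubles realizing the $\Z_2^{m\choose2}$ from the mod-2 Sato--Levine invariants, Borromean-type links realizing the $\Z^{m\choose3}$ from the triple linking numbers, and local knots with prescribed Arf invariant realizing the $\Z_2^m$ — and shows (i) every link in $\el^m$ is band-pass equivalent to one of these models, by using band-pass (and in particular clasp-pass) moves to simplify a diagram while tracking the effect on the three invariants, and (ii) the model is determined by the invariants. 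Step (i) is where I would lean on the relationship between clasp-pass moves and the $C_2$/finite-type filtration: clasp-pass equivalence of knots is controlled by the Arf invariant, and the corresponding statement for links with vanishing linking numbers reduces the link, up to band-pass moves, to a combination of the listed generators with coefficients read off by $\bar\mu(ijk)$, $\bar\mu(iijj)\bmod2$, and $\mathrm{Arf}(K_i)$.

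For $(2)\Rightarrow(1)$: a single band-pass move can be realized by a cobordism. Concretely, I would build $W$ by attaching to $M_L\times I$ a pair of handles (or a single $1$-handle/$2$-handle pair) implementing the band-pass move on the surgery curve description, arrange that the trace has $H_2$ relative to the negative end generated by a hyperbolic pair $\{L_1,D_1\}$ of embedded surfaces meeting once, check conditions (1) and (2) of $0$-solve equivalence directly, and verify that the closure $\hat W$ has even intersection form — this last point is exactly where the constraint ``both strands of each band belong to the same component'' and the mod-2 reduction in condition (3) enter, forcing the relevant self-intersections to be even. Composing such elementary cobordisms using Proposition~\ref{equivrel} (transitivity) then gives $(2)\Rightarrow(1)$.

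The main obstacle is $(1)\Rightarrow(2)$: extracting a geometric band-pass equivalence from the mere existence of a $0$-solve equivalence $4$-manifold $W$. I would attack it by first using $(2)\Rightarrow(3)$ to reduce to showing that a $0$-solve equivalence forces equality of the three invariants — i.e. prove $(1)\Rightarrow(3)$ — and then invoke $(3)\Rightarrow(2)$ to finish. Thus the real content is $(1)\Rightarrow(3)$: one must show that $\mathrm{Arf}(K_i)$, $\bar\mu(ijk)$, and $\bar\mu(iijj)\bmod 2$ are invariants of $0$-solve equivalence. For Arf and $\bar\mu(iijj)\bmod2$ this should follow from the spin condition on $\hat W$ together with Proposition~\ref{prop:sublinks} (reducing to the $1$- and $2$-component cases), realizing each invariant as a $\Z_2$-valued characteristic-number / self-intersection count that is a spin-cobordism invariant. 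For $\bar\mu(ijk)$ the argument is more delicate: by Proposition~\ref{prop:sublinks} reduce to $m=3$, then interpret the integer $\bar\mu(123)$ via triple cup products / Massey products in $M_L$ (or via the intersection of the three capped Seifert surfaces), and show this pairing is preserved by the cobordism $W$ using that $i_*,j_*$ are isomorphisms on $H_1$ matching meridians and that $H_2(W,\partial W_-)$ is hyperbolic. The technical heart is verifying that this triple-linking pairing is a cobordism invariant of the restricted type of cobordism allowed, and this is the step I expect to occupy the bulk of the proof.
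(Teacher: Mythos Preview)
Your proposal is correct and matches the paper's approach closely: the paper runs the cycle $(2)\Rightarrow(1)\Rightarrow(3)\Rightarrow(2)$, constructing the $0$-solve equivalence from a single band-pass by attaching two $0$-framed $2$-handles to $M_L\times I$ (your $(2)\Rightarrow(1)$), proving $(1)\Rightarrow(3)$ exactly as you sketch --- Arf via Proposition~\ref{prop:sublinks} and the knot case, $\bar\mu(ijk)$ via a Thom--Pontryagin map to $T^3$ extended over $W$, and $\bar\mu(iijj)\bmod 2$ via a map to $T^2$ together with the spin condition on $\hat W$ --- and obtaining $(3)\Rightarrow(2)$ from the Taniyama--Yasuhara Borromean-chord normal form, which is precisely your model-link argument. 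Your separate direct verification of $(2)\Rightarrow(3)$ is redundant given the other implications, and the paper omits it.
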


As an immediate corollary to this theorem, if $L'$ is the $m$-component unlink, we have the following result.

\begin{corollary}\label{cor:0solvable}
For an ordered, oriented $m$-component link $\link$ with vanishing pairwise linking numbers, the following conditions are equivalent:
\begin{enumerate}
\item $L$ is 0-solvable,
\item $L$ is 0-solve equivalent to the $m$-component unlink,
\item $L$ is band-pass equivalent to the $m$-component unlink,
\item $\Arf(K_i) = 0$ \\
$\bar{\mu}_L(ijk) = 0 $\\
$\bar{\mu}_L(iijj) \equiv 0 $ $(\bmod 2)$ .\\
\end{enumerate}
\end{corollary}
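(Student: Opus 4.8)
The plan is to obtain Corollary~\ref{cor:0solvable} as the special case $L'=U$ of Theorem~\ref{thm:Main}, where $U$ denotes the $m$-component unlink, together with two elementary supplements. First, $U$ has $\mathrm{Arf}(U_i)=0$ and $\bar\mu_U(ijk)=\bar\mu_U(iijj)=0$ for all indices: the unlink bounds disjoint embedded $2$-disks in $S^3$, so in the surface-intersection descriptions of $\bar\mu(ijk)$ and $\bar\mu(iijj)$ recalled in Section~2 all the relevant intersection curves and points are empty, and each component is the unknot. Hence condition~(3) of Theorem~\ref{thm:Main} for $(L,U)$ is precisely condition~(3) of the corollary, and condition~(2) for $(L,U)$ is condition~(2) of the corollary verbatim.

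The one point that needs argument is that condition~(1) of the corollary, ``$L$ is $0$-solvable,'' coincides with condition~(1) of Theorem~\ref{thm:Main} for $(L,U)$, ``$L$ is $0$-solve equivalent to $U$.'' For the direction $L\sim_0 U\Rightarrow L$ $0$-solvable I would apply Proposition~\ref{prop:0solv}: the unlink is evidently $0$-solvable, since $M_U\cong \#^m(S^1\times S^2)$ bounds $\natural^m(S^1\times D^3)$, which satisfies the definition of a $0$-solution with $r=0$ (the inclusion is an isomorphism on $H_1$ and $H_2$ vanishes). For the converse I would note that a $0$-solution $W$ for $L$ already behaves like a $0$-solve equivalence from $L$ to $U$: the hyperbolic basis $\{L_i,D_i\}$ of $H_2(W)$ has trivial normal bundles, so the intersection form of the closure is even, and since that closure has no $2$-torsion in $H_1$, Wu's theorem (quoted in Section~3) makes it spin; one then upgrades $W$ to a genuine $0$-solve equivalence by attaching the trivial $0$-solution for $U$ along a collar, or, more simply, one avoids the issue by proving the corollary's (1)$\Leftrightarrow$(3) directly via the arguments used for Theorem~\ref{thm:Main}'s (1)$\Leftrightarrow$(3), with a $0$-solution for $L$ playing the role of a $0$-solve equivalence to $U$. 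Granting this, the three conditions of the corollary are exactly the three conditions of Theorem~\ref{thm:Main} for $(L,U)$, and we are done.

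Since the weight of the argument is thus carried entirely by Theorem~\ref{thm:Main}, in a from-scratch treatment I would prove that theorem as the cycle (2)$\Rightarrow$(1)$\Rightarrow$(3)$\Rightarrow$(2). For (2)$\Rightarrow$(1): realize a single band-pass move by a cobordism obtained from $M_L\times[0,1]$ by attaching handles supported near the ball in which the move occurs, and check that pairwise linking numbers (hence $H_1$) are unchanged, that the new second homology consists of hyperbolic pairs with trivial normal bundles, and that the closure stays even, hence spin, because the band-pass move is an ``even'' move; transitivity of $\sim_0$ (Proposition~\ref{equivrel}) then finishes it. For (1)$\Rightarrow$(3): show each invariant is preserved by a $0$-solve equivalence $W$ --- $\bar\mu(ijk)$ by naturality of the triple-cup-product form on $H^1$ of the zero-surgery manifold under the $H_1$-isomorphism cobordism (no indeterminacy, since pairwise linking numbers vanish); $\mathrm{Arf}(K_i)$ by restricting to the $i$-th component via Proposition~\ref{prop:sublinks} and quoting Cochran--Orr--Teichner's computation that a knot is $0$-solvable iff its Arf invariant vanishes; and $\bar\mu(iijj)\bmod 2$ by reading it off from the spin (equivalently $w_2=0$) condition on $\hat W$ as a mod-$2$ self-intersection count. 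The implication (3)$\Rightarrow$(2) is where I expect the real difficulty: one must show the three invariants are a \emph{complete} set of band-pass obstructions. Here I would work in clasper/Habiro calculus, show that band-pass equivalence is generated by surgeries on a short list of model claspers (a degree-two tree clasper adjusting $\bar\mu(ijk)$, a Sato--Levine-type looped clasper adjusting $\bar\mu(iijj)$, and an Arf-changing move on a single component), verify that two parallel Sato--Levine claspers are removable by a band-pass move --- which is exactly why $\bar\mu(iijj)$ persists only modulo $2$ --- and then reduce an arbitrary $L$ by band-pass moves to a band sum of these model pieces indexed by its invariants, so that vanishing of the invariants (mod $2$ where relevant) cancels the pieces in pairs down to $U$.
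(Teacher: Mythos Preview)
Your proposal is correct and matches the paper's approach: the corollary is Theorem~\ref{thm:Main} specialized to $L'=U$. You are in fact more careful than the paper on the one nontrivial point---identifying ``$L$ is $0$-solvable'' with ``$L\sim_0 U$''---which the paper simply calls immediate. Your resolution via Proposition~\ref{prop:0solv} for one direction and, for the other, adapting the arguments of Lemmas~\ref{Arf}--\ref{iijj} with a $0$-solution standing in for a $0$-solve equivalence, is the right fix; note that your first suggestion, ``attaching the trivial $0$-solution for $U$ along a collar,'' does not literally work since the two pieces share no boundary component, but you immediately give the valid alternative. Your sketch of the cycle $(2)\Rightarrow(1)\Rightarrow(3)\Rightarrow(2)$ for Theorem~\ref{thm:Main} also tracks the paper closely; the only substantive deviation is in $(3)\Rightarrow(2)$, where the paper does not build the normal form from scratch via clasper calculus but instead quotes Taniyama--Yasuhara's clasp-pass classification in terms of Borromean-chord insertions and then observes that the residual type-$(i)$ chords (trefoil versus figure-eight) become band-pass equivalent, reducing $a_2$ to $a_2\bmod 2$.
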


We will prove Theorem \ref{thm:Main} by a sequence of lemmas, first showing $(2) \Rightarrow (1)$, then showing that $(1) \Rightarrow (3)$, and finally, that $(3) \Rightarrow (2)$. 

\begin{lem}
If two ordered, oriented $m$-component links $\link$ and $\linkprime$ with vanishing pairwise linking numbers are band-pass equivalent, then $L$ and $L'$ are 0-solve equivalent. 
\end{lem}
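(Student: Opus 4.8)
The plan is as follows. By Proposition~\ref{equivrel}, $0$-solve equivalence is an equivalence relation, so it suffices to treat a single band-pass move: assume $L'$ is obtained from $L$ by one band-pass move supported in a ball $B^3 \subset S^3$, with $L$ and $L'$ identical outside $B^3$. For such $L, L'$ I would build an explicit $4$-manifold $W$ with $\bd W = M_L \sqcup -M_{L'}$ and verify the three conditions in the definition of a $0$-solve equivalence.

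The geometric heart of the argument --- and the step I expect to be the main obstacle --- is to realize a single band-pass move as \emph{surgery} on a controlled framed link in the complement of $L$. Concretely, I would show that there is a $2$-component link $c = c_1 \cup c_2 \subset S^3 \setminus N(L)$ with: (i) each $c_i$ unknotted in $S^3$ and $\mathrm{lk}(c_1, c_2) = \pm 1$; (ii) both surgery framings equal to $0$ (in particular even), so that $0$-framed surgery on $c$ returns $S^3$; and (iii) $\mathrm{lk}(c_i, K_j) = 0$ for all $i, j$, with $c_1$ clasping the first band of the move and $c_2$ the second, so that $0$-framed surgery on $c$ carries $(S^3, L)$ to $(S^3, L')$. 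Establishing this from the local picture of the band-pass move (a degree-$2$, clasper-type modification) is the real content; everything afterwards is homological bookkeeping in the spirit of the proof of Proposition~\ref{equivrel}.

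Granting such a $c$, set $W := (M_L \times [0,1]) \cup h_1 \cup h_2$, where $h_i$ is a $0$-framed $2$-handle attached along $c_i \subset M_L \times \{1\}$; by (i)-(iii) one has $\bd W = M_L \sqcup -M_{L'}$. \emph{Condition $(1)$:} since $\mathrm{lk}(c_i, K_j) = 0$, each $c_i$ is null-homologous in $M_L$, so attaching $h_1, h_2$ leaves $H_1$ unchanged; hence $i_* \colon H_1(M_L) \to H_1(W)$ and $j_* \colon H_1(-M_{L'}) \to H_1(W)$ are isomorphisms onto $\Z^m$, and the product annuli $\mu_i \times [0,1]$, chosen disjoint from $B^3$ and hence from the handles, give $i_*(\mu_i) = j_*(\mu_i')$. \emph{Condition $(2)$:} the long exact sequence of $(W, \bd W_-)$ yields $H_2(W, \bd W_-) \cong \Z^2$ on generators $L_1 = (\text{core of } h_1) \cup G_1$ and $D_1 = (\text{core of } h_2) \cup G_2$, where $G_i$ is a spanning surface for $c_i$ pushed into the interior of $W$; these are connected, embedded, have self-intersection equal to the framings (hence $0$, so trivial normal bundles), and $L_1 \cdot D_1 = \pm\,\mathrm{lk}(c_1, c_2) = \pm 1$, so after reorienting we obtain exactly the required hyperbolic pair, with $r = 1$. (If the surgery description one actually extracts has a different linking matrix among the surgery curves, one repairs the surfaces by tubing in copies of the $G_i$, exactly as in the proof of Proposition~\ref{prop:sublinks}, using that the framings are even.) \emph{Condition $(3)$:} $H_1(\hat W) = 0$, so by Wu's criterion it suffices that $Q_{\hat W}$ be even; as in the reflexivity argument of Proposition~\ref{equivrel}, $H_2(\hat W)$ is generated by the capped Seifert surfaces $\hat\Sigma_i$ of $L$, the spheres $F_i$ formed from the meridinal $2$-handles together with the annuli $\mu_i \times [0,1]$, and the surfaces $L_1, D_1$ above --- each of which has self-intersection $0$ --- so $Q_{\hat W}$ is even and $\hat W$ is spin.

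In short, all the homology computations parallel those already carried out for Proposition~\ref{equivrel}, and the single genuinely new ingredient is the surgery realization of the band-pass move described in the second paragraph.
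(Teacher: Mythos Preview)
Your proposal is correct and follows essentially the same approach as the paper: reduce to a single band-pass move, attach two $0$-framed $2$-handles to $M_L\times[0,1]$ along null-homologous clasping curves that realize the band-pass by surgery, and then verify the three conditions of $0$-solve equivalence via the same homological computations you outline. The paper carries out your ``surgery realization'' step explicitly by drawing the attaching curves $\gamma_i,\gamma_j$ encircling the two bands and exhibiting the handle-slides that transform the top boundary from $M_L$ into $M_{L'}$, but the construction and the subsequent verification (including the spin check via an even intersection form on $\hat W$) are identical in content to what you describe.
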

 
 \begin{proof}
 
Because band-pass equivalence and 0-solve equivalence are both equivalence relations and thus transitive, we may assume that $L$ and $L'$ differ by a single band-pass move. Recall that we require the strands of each band to belong to the same link component. We first consider the 4-manifold $M_L \x \I $. Then, we attach two 0-framed 2-handles to $M_L \times \{1\}$ in the boundary of $M_L \times [0,1]$ along the attaching curves $\gamma_i$ and $\gamma_j$ pictured in figure \ref{fig:attach} below.

\begin{figure}[ht!]
\centering
\includegraphics[scale=.40]{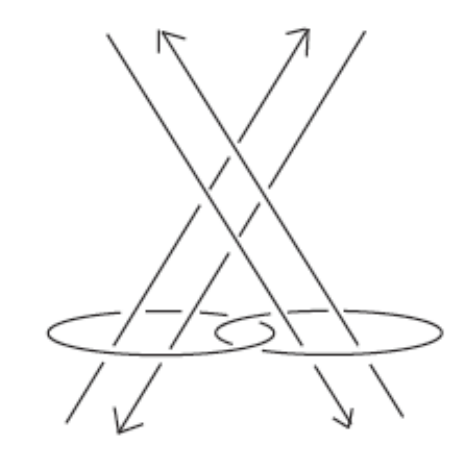}
\put(-150,50){$M_L \times \{1\}$}
\put(-14,80){\tiny0}
\put(-80,80){\tiny0}
\put(-27,60){$K_i$}
\put(-75,60){$K_j$}
\put(-90,30){$\gamma_i$}
\put(0,30){$\gamma_j$}
\put(-90,20){\tiny0}
\put(0,20){\tiny0}
\caption{Attaching curves in $M_L \x \I$}
\label{fig:attach}
\end{figure}

In $M_L \x \I$, we slide both strands of link component $K_i$ over the 2-handle attached to $\gamma_j$ and we slide both strands of link component $K_j$ over the 2-handle attached to $\gamma_i$. Note that, in figure \ref{fig:BPEslide}, the strands and attaching curves pictured each have a zero surgery coefficient; we perform the handle-slides in the closed manifold $M_L \x \{1\}$. 

\begin{figure}[ht!]
\centering
\includegraphics[scale=.35]{BPEslide1.pdf}
\includegraphics[scale=.35]{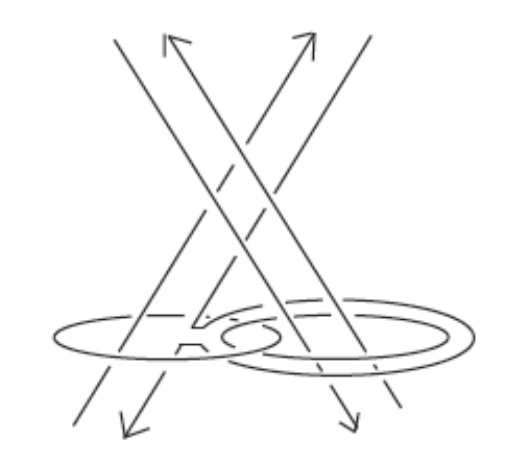}
\includegraphics[scale=.35]{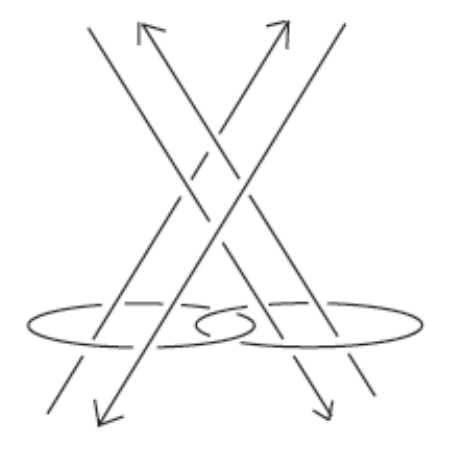}
\includegraphics[scale=.35]{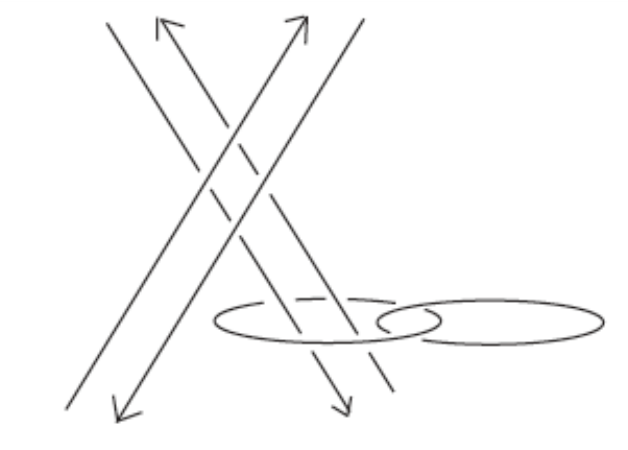}
\includegraphics[scale=.35]{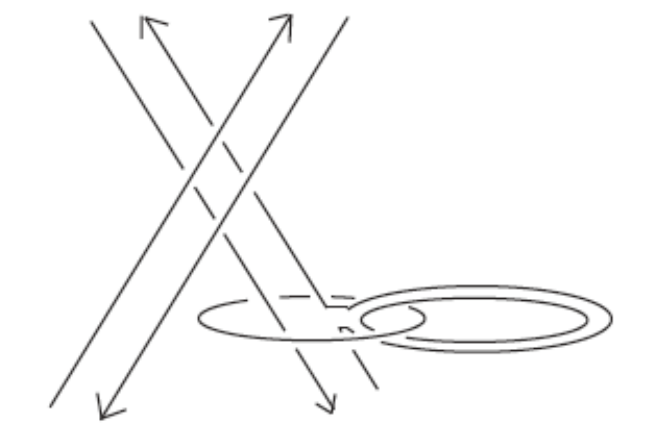}
\includegraphics[scale=.35]{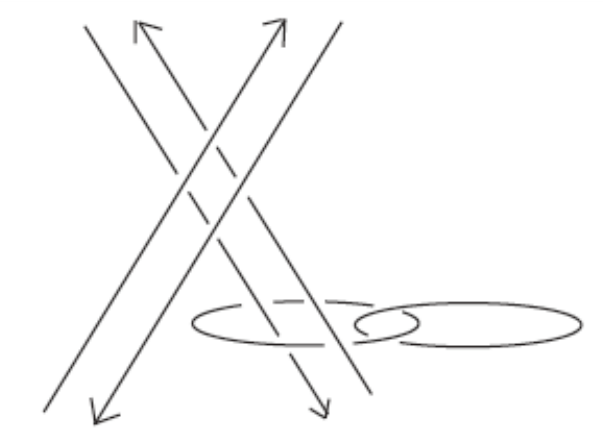}
\includegraphics[scale=.35]{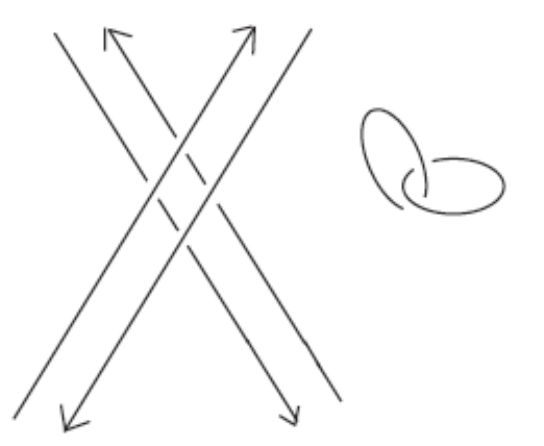}
\caption{Performing a Band-Pass via Handleslides}
\label{fig:BPEslide}
\end{figure}
 
The resulting 3-manifold, after sliding all four strands, is $M_{L'}$, as 0-framed surgery on the Hopf link is homeomorphic to $S^3$. Therefore, we consider the 4-manifold $W = M_L \times [0,1] \cup \{$two 2-handles$\}$, and we see that $W$ is a cobordism between $M_L$ and $-M_{L'}$. We wish to show that $W$ is a 0-solve equivalence.

\begin{figure}[ht!]
\centering
\includegraphics[scale=.30]{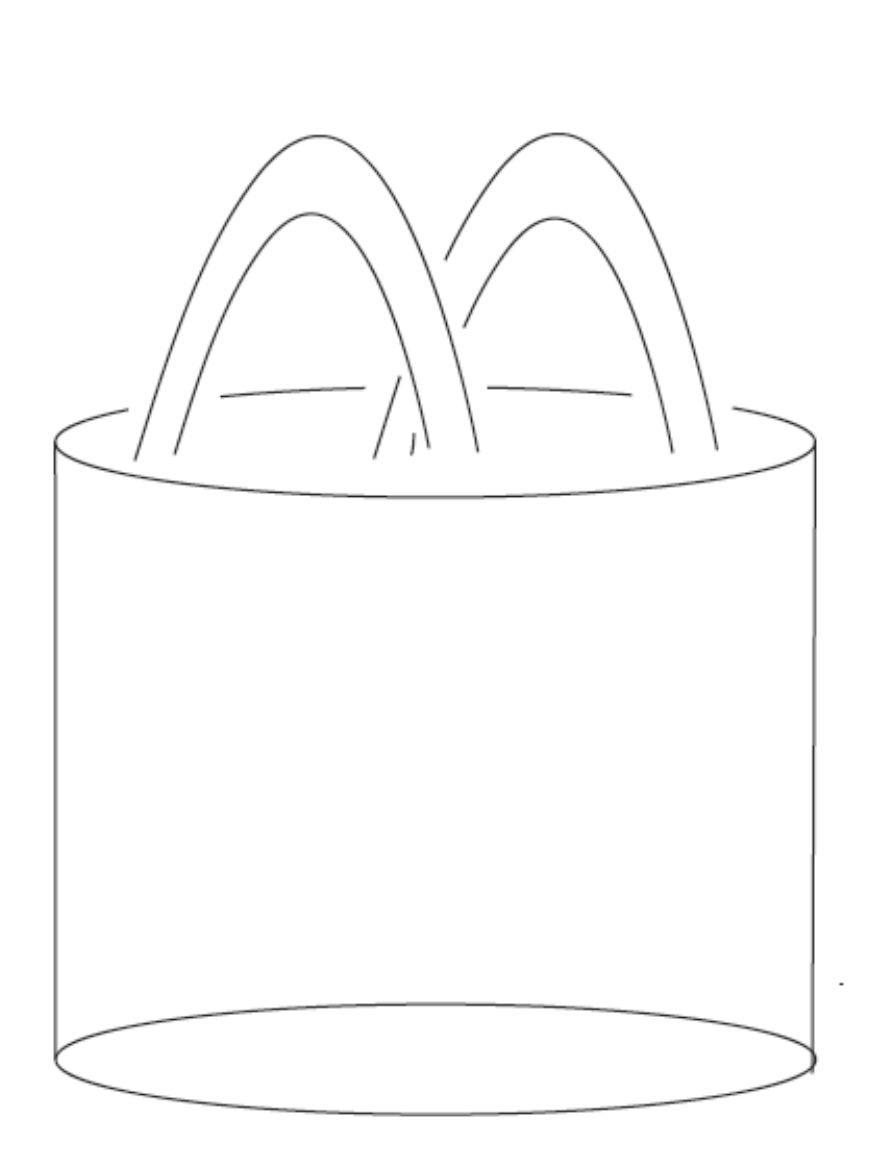}
\put(-180,50){W=}
\put(0,10){$M_L$}
\put(-10,130){$-M_{L'}$}
\caption{A cobordism between $M_L$ and $-M_{L'}$}
\end{figure}

The attaching curves $\gamma_i$ and $\gamma_j$ are null-homologous due to the requirement that the strands of each band belong to the same component. Thus, attaching the 2-handles has no effect on $H_1$ so we see that $i_*:H_1(M_{L'}) \rightarrow H_1(W)$ and $j_*: H_1(M_L) \rightarrow H_1(W)$ are isomorphisms. 

Comparing the second homology of $W$ to that of $M_L \x \I$, we see that $H_2(W) \cong H_2(M_L \x [0,1]) \oplus \Z \oplus \Z \cong H_2(M_L) \oplus \Z \oplus \Z$ with the added homology generated by $\hat{\Lambda}_1$ and $\hat{\Lambda}_2$, where $\hat{\Lambda}_i$ is the surface $\Lambda_i$ pictured in figure \ref{fig:surfaces} capped off with a 2-cell from the attached 2-handles. 

\begin{figure}[ht!]
\centering
\includegraphics[scale=.45]{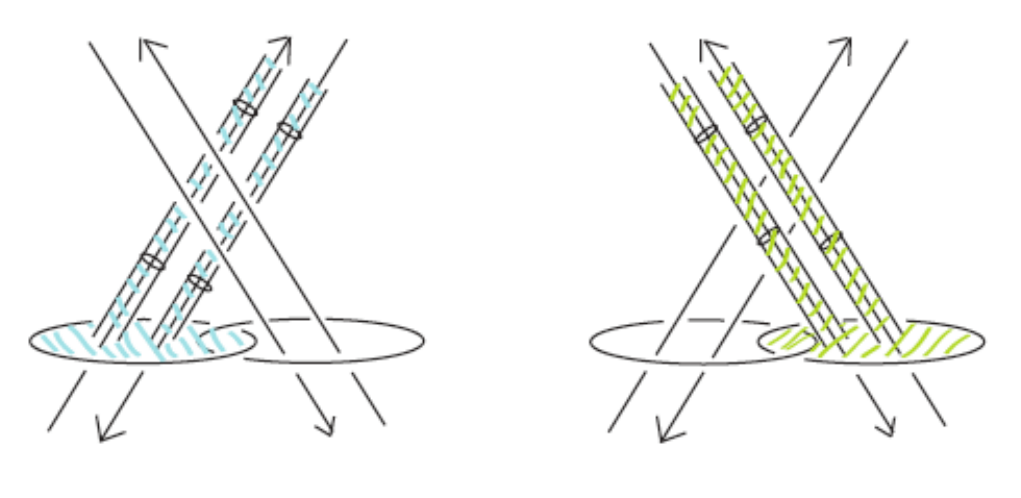}
\put(-210,50){$\Lambda_1$}
\put(-30,50){$\Lambda_2$}
\caption{New Generators for $H_2(W)$}
\label{fig:surfaces}
\end{figure}

By pushing the interior of $\Lambda_1$ very slightly into $\operatorname{int}(M_L \x \I) \subset W$, we can assure that  $\hat{\Lambda}_1 \cap \hat{\Lambda}_2 = \{p\}$, where $p$ is a point. We choose orientations on $\hat{\Lambda}_1$ and $\hat{\Lambda}_2$ to ensure that $\hat{\Lambda}_1 \cdot \hat{\Lambda}_2 = +1$. We can also assure that $\hat{\Lambda}_1$ and $\hat{\Lambda}_2$ are disjoint from the generators of $H_2(M_L \x \I)$. Therefore, $H_2(W, \bd W_-) \cong \frac{H_2(W)}{H_2(\partial{W}_-)}$ has basis $\{\hat{\Lambda}_1, \hat{\Lambda}_2\}$. Each of $\Lambda_1$ and $\Lambda_2$ are surfaces with boundary and thus have trivial normal bundles inducing the 0 framing on their boundaries. The handle is attached with the 0 framing, so $\hat{\Lambda}_1$ and $\hat{\Lambda}_2$ have trivial normal bundles. The second condition of being a 0-solve equivalence is satisfied.

The generators of $H_2(\hat{W})$ are \linebreak $\{ \hat{\Sigma}_1, \dots, \hat{\Sigma}_m, F_1, \dots, F_m, \hat{\Lambda}_1, \hat{\Lambda}_2\}$, where $\hat{\Sigma}_i$ is an oriented Seifert surface for $K_i$, closed off with a disk in $M_L \x \{0\}$, $F_i$ is the annulus $\mu_i \x \I$ (where $\mu_i$ is the meridian of $K_i$) closed off with disks from the process of closing $W$, and $\hat{\Lambda}_i$ are the surfaces mentioned above. We can assure that the $\hat{\Lambda}_i$ are disjoint from the $F_i$, and so the intersection matrix for $Q_{\hat{W}} = \bigoplus \left(
\begin{array}{cc}
0&1\\
1&0\\
\end{array} \right) $ and so $\hat{W}$ is spin.

These conditions show that $W$ is a 0-solve equivalence between $L$ and $L'$. 
 
 \end{proof}

Next, we prove the second step of Theorem \ref{thm:Main} using the following results. 

\begin{lem}\label{Arf}
 Suppose that $\link$ and $\linkprime$ are two ordered, oriented, $m$-component, 0-solve equivalent links with vanishing pairwise linking numbers. Then, $\Arf(K_i) = \Arf(K_i')$.
 \end{lem}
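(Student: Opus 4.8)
The plan is to reduce the statement to knots via Proposition~\ref{prop:sublinks} and then apply the Cochran--Orr--Teichner characterization of $0$-solvable knots together with Proposition~\ref{prop:0solv}. Since $L$ and $L'$ are $0$-solve equivalent, Proposition~\ref{prop:sublinks} applied to the $1$-component sublinks shows that $K_i$ and $K_i'$ are $0$-solve equivalent knots for every $i\in\{1,\dots,m\}$; no extra hypothesis is needed here, because the pairwise linking numbers of a $1$-component link are vacuously zero, so $K_i,K_i'\in\el^1$. Hence it suffices to prove the special case: if two knots $J$ and $J'$ are $0$-solve equivalent, then $\mathrm{Arf}(J)=\mathrm{Arf}(J')$.

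To prove this special case I would use the theorem of Cochran, Orr, and Teichner \cite{COT} that a knot is $0$-solvable if and only if its Arf invariant vanishes. Suppose toward a contradiction that $\mathrm{Arf}(J)\ne\mathrm{Arf}(J')$, and, after possibly swapping the roles of $J$ and $J'$ using symmetry of $0$-solve equivalence (Proposition~\ref{equivrel}), assume $\mathrm{Arf}(J')=0$. Then $J'$ is $0$-solvable, so it admits a $0$-solution $V$. Letting $W$ be a $0$-solve equivalence between $J$ and $J'$, Proposition~\ref{prop:0solv} shows that $W\cup_{M_{J'}}V$ is a $0$-solution for $J$, so $J$ is $0$-solvable, and the Cochran--Orr--Teichner theorem then forces $\mathrm{Arf}(J)=0$, contradicting $\mathrm{Arf}(J)=1$. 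Therefore $\mathrm{Arf}(J)=\mathrm{Arf}(J')$, and reassembling over all components gives $\mathrm{Arf}(K_i)=\mathrm{Arf}(K_i')$.

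I do not expect a serious obstacle: the only input beyond the structural propositions already established is the COT identification of the Arf invariant as the complete obstruction to $0$-solvability of a knot, and Proposition~\ref{prop:0solv} was set up precisely so that a $0$-solve equivalence can be glued onto a $0$-solution to yield another $0$-solution. One could instead attempt to read off the Arf invariant directly from the spin closure $\hat W$ of a $0$-solve equivalence, but the reduction to knots via Proposition~\ref{prop:sublinks} makes such a computation unnecessary.
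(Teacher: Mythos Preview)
Your proposal is correct and follows essentially the same route as the paper: reduce to knots via Proposition~\ref{prop:sublinks}, then use Proposition~\ref{prop:0solv} (together with symmetry) to conclude that $K_i$ is $0$-solvable if and only if $K_i'$ is, and finish with the Cochran--Orr--Teichner identification of the Arf invariant as the obstruction to $0$-solvability of a knot. The paper phrases the middle step as ``either both are $0$-solvable or neither is'' rather than as a contradiction, but the content is identical.
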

 
 \begin{proof}
 By Proposition \ref{prop:sublinks}, for all $i$, $K_i$ and $K_i'$ are 0-solve equivalent knots. By Proposition \ref{prop:0solv}, either both $K_i$ and $K_i'$ are 0-solvable knots, or neither $K_i$ nor $K_i'$ are 0-solvable knots. If both $K_i$ and $K_i$ are 0-solvable knots, we know that $\Arf(K_i) = \Arf(K_i') = 0.$ If neither $K_i$ nor $K_i'$ are 0-solvable knots, we know that $\Arf(K_i) = \Arf(K_i') = 1$ \cite{COT}. Either way, we must have that $\Arf(K_i) = \Arf(K_i')$.
 \end{proof}
 

 Let $J$ and $J'$ be ordered, oriented, 0-solve equivalent links with vanishing pairwise linking numbers, and suppose that $J$ and $J'$ are $m$-component links where $m=2$ or $m=3$. Let $W$ be a 4-manifold realizing the 0-solve equivalence from $J$ to $J'$. We can choose oriented Seifert surfaces $\{\Sigma_k\}_{k=1}^m$ in $S^3 - J$ for each component of $J$ in such a way that each pair of Seifert surfaces intersect in a single simple closed curve \cite{Tim}. We choose oriented Seifert surfaces $\{\Sigma_k' \}_{k=1}^m$ for the components of $J'$ in $S^3 - {J'}$ subject to the same conditions. 
 
 In $M_J$ and $M_{J'}$, each Seifert surface $\Sigma_k$ and $\Sigma_k'$ is capped off with a disk yielding closed surfaces $\{\hat{\Sigma}_k\}$ in $M_J$ and $\{\hat{\Sigma}_k'\}$ in $M_{J'}$. Within each 3-manifold, the surfaces intersect pairwise in a circle.

Using the Thom-Pontryagin construction, we define maps $f_k: M_J \rightarrow S^1 \subset \mathbb{C}$ as follows. Take a product neighborhood $\hat{\Sigma}_k \x [-1,1]$, where the $+1$ corresponds to the positive side of $\hat{\Sigma}_k$. Define $f_k: \hat{\Sigma}_k \x [-1,1] \rightarrow S^1$ by $(x,t) \mapsto e^{2\pi i t}$, and for $y \in M_J - (\hat{\Sigma}_k \x [-1,1])$, define $f_k(y) = -1$. Similarly define maps $f_k' : -M_{J'} \rightarrow S^1 \subset \mathbb{C}$. Then, we consider the maps $f = f_1 \x \dots \x f_m: M_J \rightarrow (S^1)^m$ and $f' = f'_1 \x \dots \x f'_m: -M_{J'} \rightarrow (S^1)^m$. The maps $\{\pi_k\}$ are the standard projection maps, so that $f_k = \pi_k \circ f$ and $f_k' = \pi_k \circ f'$.

\begin{figure}[h]
\centering
\includegraphics[scale=.35]{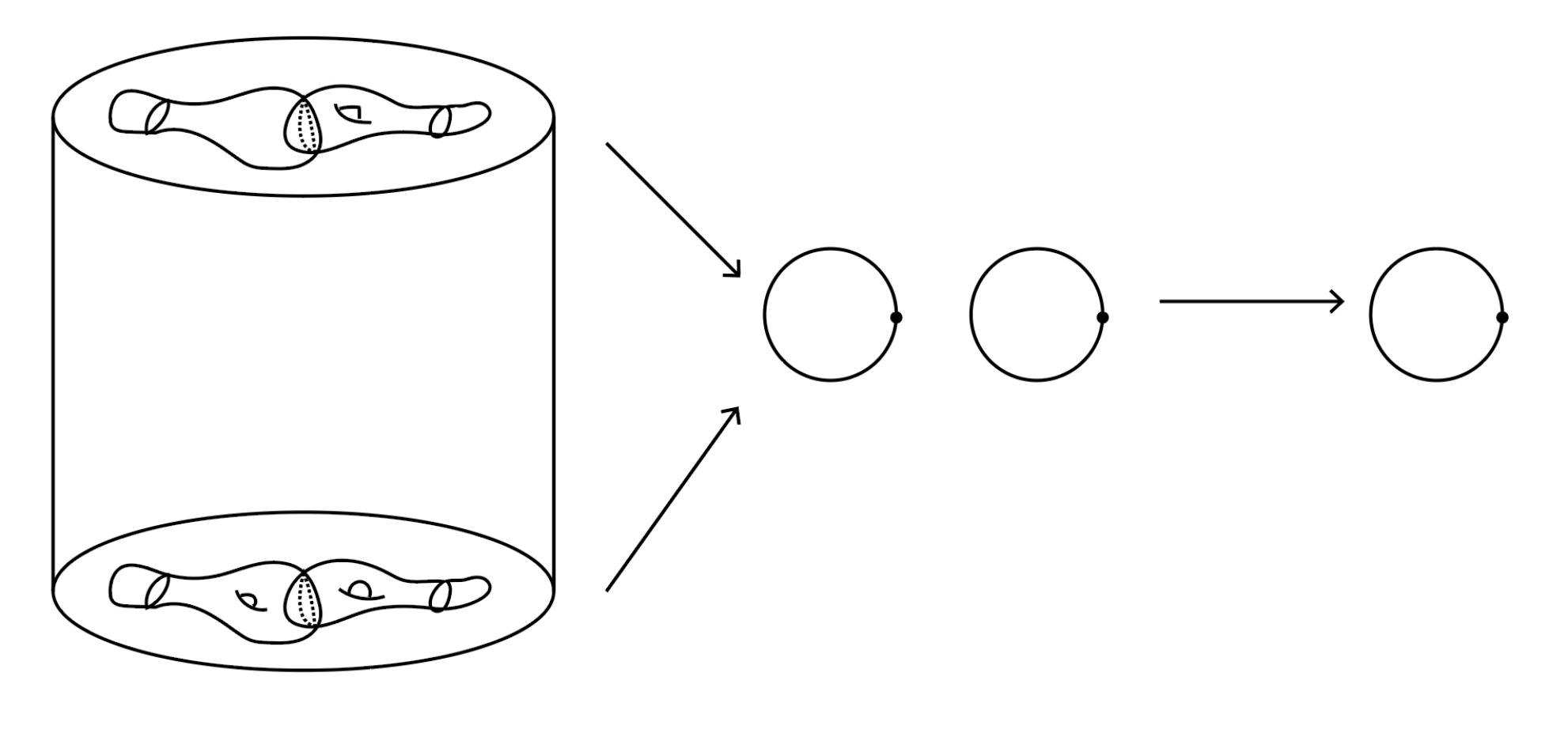}
\put(-138,87){$\times$}
\put(-190,119){$f$}
\put(-190,40){$f'$}
\put(-75,100){$\pi_k$}
\put(-142,95){$1$}
\put(-98,95){$1$}
\put(-14,95){$1$}
\put(-300,152){$\hat{\Sigma_1}$}
\put(-250,152){$\hat{\Sigma_2}$}
\put(-300,4){$\hat{\Sigma_1'}$}
\put(-250,4){$\hat{\Sigma_2'}$}
\put(-265,118){$A$}
\put(-275,38){$A'$}
\caption{The Thom-Pontryagin Construction on $M_J$ and $-M_{J'}$}
\end{figure}

%

We also have a framing on the surfaces $\hat{\Sigma}_k$ and $\hat{\Sigma}_n'$ given by the normal direction to the tangent plane $T_x\hat{\Sigma}_k$ at each point $x \in \hat{\Sigma}_k$. The maps $f:M_J \rightarrow (S^1)^m$ and $f': -M_{J'} \rightarrow (S^1)^m$ induce maps on first homology, $f_*: H_1(M_J) \rightarrow H_1((S^1)^m)$ and $f'_*:H_1(-M_{J'}) \rightarrow H_1((S^1)^m)$ that are isomorphisms. As $W$ is a 0-solve equivalence, the induced inclusion maps $i_*$ and $j_*$ are also isomorphisms. 



\[
\begin{diagram}
\node{\pi_1(M_J)} \arrow{e}
\node{H_1(M_J)} \arrow{s,lr}{\cong}{i_*} \arrow{ese,tb}{f_*}{\cong}
\\
\node{\pi_1(W)} \arrow{e}
\node{H_1(W)} \arrow[2]{e,t,3,..}{\alpha}
   \node[2]{H_1((S^1)^m)}
\\
\node{\pi_1(-M_{J'})} \arrow{e}
\node{H_1(-M_{J'})} \arrow{n,lr}{\cong}{j_*} \arrow{ene,tb}{\cong}{f'_*}  
\end{diagram}
\]
\\

Then, define the map $\alpha: H_1(W) \rightarrow H_1((S^1)^m)$ to be $\alpha = f_* \circ i_*^{-1} = f'_* \circ j_*^{-1}$, by the Thom-Pontryagin construction and the fact that $i_*(\mu_k) = j_*(\mu'_k)$. Since $\pi_1((S^1)^m) \cong \Z^m$ is abelian, we can define the map $\bar{\alpha}: \pi_1(W) \rightarrow \pi_1((S^1)^m)$ as in the following diagram:

\[
\begin{diagram}
\node{\pi_1(W)}\arrow{e,t}{\bar{\alpha}}\arrow{s}
\node{\pi_1((S^1)^m)}\arrow{s,r}{\cong}
\\
\node{H_1(W)}\arrow{e,t}{\alpha}
\node{H_1((S^1)^m)}
\end{diagram}
\]
\\

We wish to show that there is an extension $\bar{f}: W \rightarrow (S^1)^m$ such that $\bar{f}|_{\bd M_J} = f$, $\bar{f}|_{\bd -M_{J'}}=f'$, and $\bar{f}_*=\bar{\alpha}$. 

We have the following commutative diagram:

\[
\begin{diagram}
\node{}
\node{\pi_1(M_J)}\arrow{sw,t}{i_*}\arrow{se,t}{f_*}
\\
\node{\pi_1(W)}\arrow[2]{e,t}{\bar{\alpha}}
\node{}
\node{\pi_1((S^1)^m)}
\\
\node{}
\node{\pi_1(-M_{J'})}\arrow{nw,b}{j_*}\arrow{ne,b}{f'_*}
\\
\end{diagram}
\]

%

As the CW-complex $S^1 \x \dots \x S^1$ is an Eilenberg-Maclane space $K(\Z^m,1)$, we can extend the maps $f, f'$ to a map $\bar{f}:W \rightarrow S^1 \x \dots \x S^1$ such that $\bar{f}|_{M_J} = f$, $\bar{f}|_{-M_{J'}}=f'$, and $\bar{f}_*=\bar{\alpha}$. Up to a homotopy, we can arrange that $\bar{f}$ is transverse to the point $\vec{1}$ in $(S^1)^m$. Then, the preimage $\bar{f}^{-1}\{\vec{1}\}$ in $W$ is a framed submanifold cobound by the intersections $\bigcap \hat{\Sigma}_k$ and $\bigcap \hat{\Sigma}_k'$. We use the cobordisms in the following lemmas.


 \begin{lem}\label{ijk}
 Suppose that $\link$ and $\linkprime$ are two ordered, oriented, $m$-component, 0-solve equivalent links with vanishing pairwise linking numbers. Then, $\bar{\mu}_L(ijk) = \bar{\mu}_{L'}(ijk)$.
 \end{lem}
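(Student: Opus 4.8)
The plan is to use the geometric definition of $\bar{\mu}_L(ijk)$ as the signed count of triple points of three Seifert surfaces, combined with the fact (from Proposition \ref{prop:sublinks}) that $\bar{\mu}_L(ijk)$ only depends on the $3$-component sublink $K_i\cup K_j\cup K_k$, so it suffices to treat the case $m=3$ and show $\bar{\mu}_L(123)=\bar{\mu}_{L'}(123)$. I would exploit that $\bar{\mu}_L(123)$ is a concordance invariant and, more importantly, that it can be recovered from the $4$-manifold $W$ realizing the $0$-solve equivalence. The key idea: $\bar\mu_L(123)$ equals the triple Milnor invariant, which by work of Turaev/Porter (and as used in \cite{Tim}) is computed by a triple Massey product / triple cup product in $H^*(M_L)$, and such cohomological data is controlled by the cobordism $W$ when the inclusion $H_1(M_L)\to H_1(W)$ is an isomorphism.

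The steps, in order, would be: (1) Reduce to $m=3$ via Proposition \ref{prop:sublinks}. (2) Recall that $\bar\mu_L(123)$ is detected by the triple cup product $H^1(M_L)\otimes H^1(M_L)\otimes H^1(M_L)\to H^3(M_L)\cong\Z$, evaluated on the meridional generators $\mu_1^*,\mu_2^*,\mu_3^*$ dual to the meridians — this is where the geometric picture of Figure 5 (the triple intersection $\Sigma_i\cap\Sigma_j\cap\Sigma_k$) makes the identification concrete, since the Seifert surfaces $\Sigma_i$ are Poincaré dual in $M_L$ to the classes $\mu_i^*$. (3) Use condition (1) of $0$-solve equivalence: $i_*\colon H_1(M_L)\to H_1(W)$ and $j_*\colon H_1(-M_{L'})\to H_1(W)$ are isomorphisms carrying $\mu_i$ to a common class, so $H^1(W)\cong H^1(M_L)\cong H^1(M_{L'})$ compatibly with the meridional bases. (4) The triple cup product of three degree-$1$ classes on $W$ lands in $H^3(W)$; restriction to either boundary component is compatible with the cup-product structure, so the triple cup products computing $\bar\mu_L(123)$ and $\bar\mu_{L'}(123)$ are both pullbacks of the single triple cup product on $W$ of the classes $\mu_1^*,\mu_2^*,\mu_3^*\in H^1(W)$. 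Since $\bd W = M_L\sqcup -M_{L'}$ and the orientation reversal accounts for the sign, we conclude $\bar\mu_L(123)=\bar\mu_{L'}(123)$.

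The cleaner alternative I would actually pursue — and this is the approach most in keeping with the rest of the paper — is to bypass cohomology and argue directly with embedded surfaces in $W$. By Proposition \ref{prop:0solv}, one may as well work with a $0$-solution, or argue as follows: push Seifert surfaces $\Sigma_1,\Sigma_2,\Sigma_3$ for $K_1,K_2,K_3$ from $M_L=\bd W_+$ into $W$, and likewise $\Sigma_1',\Sigma_2',\Sigma_3'$ from $M_{L'}=\bd W_-$; since $i_*(\mu_i)=j_*(\mu_i')$ and the relevant relative homology classes in $H_2(W,\bd W)$ agree (the closed-up surfaces $\hat\Sigma_i$ and $\hat\Sigma_i'$ are homologous in $W$ because $H_1(W)$ is generated by the meridians with the relations coming from the vanishing linking numbers), the two triples of surfaces represent the same relative $2$-cycles up to the basis $\{L_i,D_i\}$ of $H_2(W,\bd W_-)$ — which is spherical/hyperbolic and contributes nothing to a triple intersection number. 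Then the algebraic triple intersection number $\Sigma_1\cdot\Sigma_2\cdot\Sigma_3$ computed near $\bd W_+$ equals that computed near $\bd W_-$, giving $\bar\mu_L(123)=\bar\mu_{L'}(123)$.

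The main obstacle I anticipate is step (4)/the surface argument: making precise that the triple intersection number is genuinely a well-defined invariant of the relative homology classes in $W$ (triple products of $2$-dimensional classes in a $4$-manifold are not generally defined — one needs the boundary structure and the fact that each $\Sigma_i$ bounds a specific curve to pin down the count), and checking that the "hyperbolic" summand $\langle L_i,D_i\rangle$ and the capped Seifert surfaces $\hat S_i$ (realizing null-homologies of meridians) do not contribute spurious triple points. Handling these contributions carefully — probably by choosing the $\Sigma_i$ to meet the collar $\bd W_+\times I$ in product form and to be disjoint from the $L_i,D_i$ as in Proposition \ref{prop:sublinks} — is the technical heart of the argument; the reduction to $m=3$ and the homological bookkeeping are routine.
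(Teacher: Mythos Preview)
Your proposal is on the right track and would work, but the paper packages the same idea more cleanly via the Thom--Pontryagin construction. Rather than arguing with cup products or with triple intersections of surfaces pushed into $W$ (where, as you note, one must justify that a ``triple intersection number'' of relative $2$-classes in a $4$-manifold is well defined and that the hyperbolic summand $\{L_i,D_i\}$ contributes nothing), the paper builds maps $f\colon M_J\to S^1\times S^1\times S^1$ and $f'\colon -M_{J'}\to S^1\times S^1\times S^1$ from product neighborhoods of the capped Seifert surfaces $\hat\Sigma_i$ and $\hat\Sigma_i'$, so that $f^{-1}(1,1,1)$ and $f'^{-1}(1,1,1)$ are precisely the signed triple-point sets computing $\bar\mu_J(123)$ and $\bar\mu_{J'}(123)$. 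Condition~(1) of $0$-solve equivalence forces $f_*\circ i_*^{-1}=f'_*\circ j_*^{-1}$ on $H_1$, and since $T^3=K(\Z^3,1)$ the maps $f,f'$ extend to $\bar f\colon W\to T^3$. The preimage $\bar f^{-1}(1,1,1)$ is then a framed $1$-manifold in $W$ realizing a framed cobordism between the two triple-point sets, so their signed counts agree. This is of course equivalent to your cup-product argument --- the map to $T^3$ classifies $(\mu_1^*,\mu_2^*,\mu_3^*)\in H^1$, and the preimage of a point is Poincar\'e dual to their triple product --- but it sidesteps entirely the bookkeeping about $H^3(W)$, restriction to the boundary, and the $\{L_i,D_i\}$ summand that you flagged as the technical heart; none of that structure is ever invoked.
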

 
 \begin{proof}

Let $J$ and $J'$ be corresponding 3-component sublinks $J = K_i \cup K_j \cup K_k$ and $J' = K_i' \cup K_j' \cup K_k'$ of $L$ and $L'$, respectively. For simplicity, we use the indices $1,2,3$  for $J$ and $J'$ to denote the ordering of the components. Thus, $\bar{\mu}_L(ijk) = \bar{\mu}_J(123)$ and $\bar{\mu}_{L'}(ijk) = \bar{\mu}_{J'}(123)$. By Proposition \ref{prop:sublinks}, $J$ and $J'$ are 0-solve equivalent links; we wish to show that $\bar{\mu}_J(123) = \bar{\mu}_{J'}(123)$. 

Using the above construction in the case where $m=3$, the 1-manifold $\bar{f}^{-1}\{(1,1,1)\}$ in $W$ is a framed cobordism between $f^{-1}\{(1,1,1)\} = \hat{\Sigma_1} \cap \hat{\Sigma_2} \cap \hat{\Sigma_3} \subset M_J$ and $f'^{-1}\{(1,1,1)\}= \hat{\Sigma_1'} \cap \hat{\Sigma_2'} \cap \hat{\Sigma_3'} \subset M_{J'}$, which are each a collection of triple points. Respecting the ordering of the components, for each point $p_n \in f^{-1}\{(1,1,1)\}$ and for each point $p_n' \in f'^{-1}\{(1,1,1)\}$, we assign a sign of $+1$ if the orientation of $p_n$ (respectively, $p_n'$) induced by the framings on the surfaces agrees with the orientation on $M_J$ (respectively, $-M_{J'}$), and a sign of $-1$ otherwise. Then, the Milnor's invariant $\bar{\mu}_J(123) = \sum {(-1)^{\epsilon_n}}$, where $\epsilon_n = \pm 1$ is the sign of the point $p_n$. The Milnor's invariant $\bar{\mu}_{J'}(123) = \sum {(-1)^{\epsilon_n'}}$, where $\epsilon_n' = \pm 1$ is the sign of the point $p_n'$ \cite{Tim}. As the collections $\{p_n\}$ and $\{p_n'\}$ cobound a framed 1-manifold, $\bar{\mu}_J(123)$ and $\bar{\mu}_{J'}(123)$ must be equal.

\begin{figure}[ht!]
\centering
\includegraphics[scale=.3]{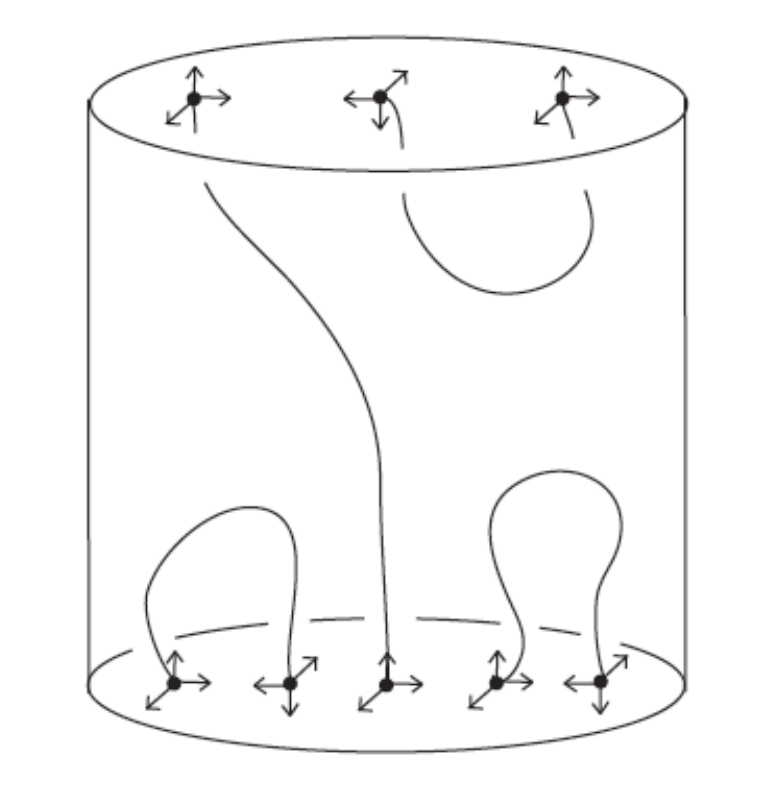}
\put(-170,60){$\bar{f}^{-1}\{(1,1,1)\}$}
\caption{Framed Cobordism between $f^{-1}\{(1,1,1)\}$ and $f'^{-1}\{(1,1,1)\}$}
\end{figure}

 \end{proof}

 \begin{lem}\label{iijj}
 Suppose that $\link$ and $\linkprime$ are two ordered, oriented, $m$-component, 0-solve equivalent links with vanishing pairwise linking numbers. Then, $\bar{\mu}_L(iijj) \equiv \bar{\mu}_{L'}(iijj) $ $(\bmod 2)$.
 \end{lem}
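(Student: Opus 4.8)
The plan is to mimic the structure of the proof of Lemma \ref{ijk}, but now working with the Sato-Levine description of $\bar\mu_L(iijj)$ and keeping track of a $\Z_2$-valued obstruction. First, by Proposition \ref{prop:sublinks} it suffices to treat the $2$-component sublinks $J = K_i \cup K_j$ and $J' = K_i' \cup K_j'$, which are $0$-solve equivalent; relabel them as $1,2$. Let $W$ be a $0$-solve equivalence between $J$ and $J'$, so $i_\ast \colon H_1(M_J) \to H_1(W)$ and $j_\ast \colon H_1(-M_{J'}) \to H_1(W)$ are isomorphisms sending $\mu_k \mapsto \mu_k'$. Choose Seifert surfaces $\Sigma_1, \Sigma_2$ in $S^3 - N(J)$ meeting in a single circle $\gamma$, cap them off to closed surfaces $\hat\Sigma_1, \hat\Sigma_2 \subset M_J$ meeting in $\gamma$, and recall $\bar\mu_J(1122) = lk(\gamma, \gamma^+)$; similarly for $J'$.

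The key step is to produce, in $W$, a properly embedded surface $\mathcal{F}$ cobounding $\gamma \subset M_J$ and $\gamma' \subset -M_{J'}$, together with a push-off $\mathcal{F}^+$, and to express $\bar\mu_{J}(1122) - \bar\mu_{J'}(1122)$ modulo $2$ as a self-intersection (or an intersection of $\mathcal F$ with $\mathcal F^+$) that is forced to be even by the spin hypothesis on $\hat W$. Concretely: just as in Lemma \ref{ijk}, I would build maps $f \colon M_J \to S^1 \times S^1$ and $f' \colon -M_{J'} \to S^1 \times S^1$ via the Thom-Pontryagin construction on $\hat\Sigma_1, \hat\Sigma_2$, note that the induced maps on $H_1$ are isomorphisms agreeing with $i_\ast, j_\ast$ under the identification $i_\ast(\mu_k) = j_\ast(\mu_k')$, and --- using that $S^1 \times S^1 = K(\Z^2, 1)$ --- extend to $\bar f \colon W \to S^1 \times S^1$ restricting to $f, f'$ on the two ends. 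Then $\Gamma := \bar f^{-1}(\mathrm{pt}_1) \cap \bar f^{-1}(\mathrm{pt}_2)$ is a properly embedded surface in $W$ with $\partial \Gamma = \gamma \sqcup -\gamma'$ (here I would choose the two regular values generically so the two preimage $3$-manifolds meet transversely in a surface), carrying a normal framing inherited from the framings of the two coordinate preimages. The Sato-Levine invariant of $J$ is the framing obstruction measured on $\gamma$ relative to the surface framing, and likewise for $J'$; the difference of these two relative framings is the evaluation of the normal Euler number of $\Gamma$ (capped off appropriately inside $\hat W$) against its interior, i.e. a self-intersection number of a closed surface $\hat\Gamma$ in $\hat W$. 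Since $\hat W$ is spin, every closed surface in it has even self-intersection, giving $\bar\mu_J(1122) \equiv \bar\mu_{J'}(1122) \pmod 2$.

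The main obstacle is the bookkeeping that translates "$lk(\gamma,\gamma^+)$" into a self-intersection number of a closed surface in $\hat W$. One must be careful that $\gamma$ and $\gamma'$ bound in $W$ a single connected (or at least framed-null-cobordant) surface $\Gamma$ --- this requires checking that $[\gamma] = 0 \in H_1(W)$, which follows because $\gamma$ is null-homologous already in $M_J$ (it bounds $\Sigma_1$, or equivalently lies in the kernel of $H_1(M_J \setminus (\hat\Sigma_1 \cup \hat\Sigma_2)) \to \cdots$), and $i_\ast$ is injective. One then closes $\Gamma$ off using the capping disks of the $\hat\Sigma_k$ inside the $2$-handles attached to the meridians during the construction of $\hat W$, and verifies that the resulting $\hat\Gamma$ has self-intersection $\equiv \bar\mu_J(1122) - \bar\mu_{J'}(1122) \pmod 2$: the contributions at the two ends are exactly the Sato-Levine numbers (with signs coming from the orientation reversal on the $-M_{J'}$ end), and the contribution from the caps is zero since the framings extend over the $2$-handle cores. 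Invoking Wu's criterion (quoted in the Preliminaries) that $\hat W$ spin implies $Q_{\hat W}$ even --- note $H_1(\hat W) \cong H_1(W)$ is free, so there is no $2$-torsion issue --- finishes the argument. A subtle point to handle with care is the sign/orientation conventions when gluing $M_J$ and $-M_{J'}$, to be sure the two Sato-Levine contributions appear with a relative minus sign so that the even self-intersection yields a congruence of the \emph{difference} rather than the sum; I would resolve this exactly as in Lemma \ref{ijk}, where the same orientation reversal on $\bd W_-$ is already in play.
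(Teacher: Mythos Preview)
Your proposal is correct and follows essentially the same route as the paper: reduce to the two-component sublinks, build $\bar f\colon W\to S^1\times S^1$ via Thom--Pontryagin and the fact that $S^1\times S^1=K(\Z^2,1)$, take preimages of a point and a nearby push-off to obtain disjoint surfaces in $W$ cobounding $\gamma\sqcup -\gamma'$ and $\gamma^+\sqcup -\gamma'^+$, cap these to closed surfaces in $\hat W$, and use that the spin hypothesis forces their intersection number---which equals a self-intersection because the two closed surfaces are homologous via the preimage of an arc in $S^1\times S^1$---to be even. One small correction to your bookkeeping: the capping surfaces live in the two $\B^4$ $4$-handles of $\hat W$, not in the $2$-handles attached along the meridians, and it is precisely the intersections of these caps in each $\B^4$ that contribute $lk(\gamma,\gamma^+)=\bar\mu_J(1122)$ and $lk(\gamma',\gamma'^+)=\bar\mu_{J'}(1122)$, while the middle pieces $\bar f^{-1}(p)$ and $\bar f^{-1}(q)$ in $W$ are disjoint and contribute nothing.
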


\begin{proof}
Let $J$ and $J'$ be corresponding 2-component sublinks $J = K_i \cup K_j$ and $J' = K_i' \cup K_j'$ of $L$ and $L'$, respectively. For simplicity, we use the indices $1,2$ for $J$ and $J'$ to denote the ordering of the components. Thus, $\bar{\mu}_L(iijj) = \bar{\mu}_J(1122)$ and $\bar{\mu}_{L'}(iijj) = \bar{\mu}_{J'}(1122)$ By Proposition \ref{prop:sublinks}, $J$ and $J'$ are 0-solve equivalent links; we wish to show that $\bar{\mu}_J(1122) \equiv \bar{\mu}_{J'}(1122) \bmod 2$.

From the Thom-Pontryagin construction with $m=2$, the pre-image $\bar{f}^{-1}\{(1,1)\}$ is a framed surface in $W$ with boundary $A \bigsqcup -A'$ in $\bd W$. In $S^1 \x S^1$, we consider $D_{p}$, an $\epsilon$- neighborhood of the point $p = (1,1)$. Let $q=(e^{2\pi i \epsilon}, 1)$ be a point on the boundary of $D_p$. Then, $\bar{f}^{-1}(q)$ is a surface in $W$ with boundary $B \bigsqcup -B'$, where $B$ is the curve $A$ pushed off in the positive direction of $\hat{\Sigma}_1$ and $B'$ is the curve $A'$ pushed off in the positive direction of $\hat{\Sigma}_1'$. See figure \ref{fig:1122map}. Furthermore, these surfaces $\bar{f}^{-1}(p)$ and $\bar{f}^{-1}(q)$ are disjoint in $W$, and if we consider a path $\gamma (t) = (e^{2 \pi i \epsilon t };  1), t \in \I $, the pre-image $\bar{f}^{-1}( \gamma[0,1] )$ is a 3-submanifold of $W$ that is cobounded by the surfaces $\bar{f}^{-1}(p)$ and $\bar{f}^{-1}(q)$. For a sufficiently small choice of $\epsilon >0$, we can ensure that $\bar{f}^{-1}(\gamma[0,1])$ is disjoint from the meridians of $J$ and $J'$. 

\begin{figure}[ht!]
\centering
\includegraphics[scale=.4]{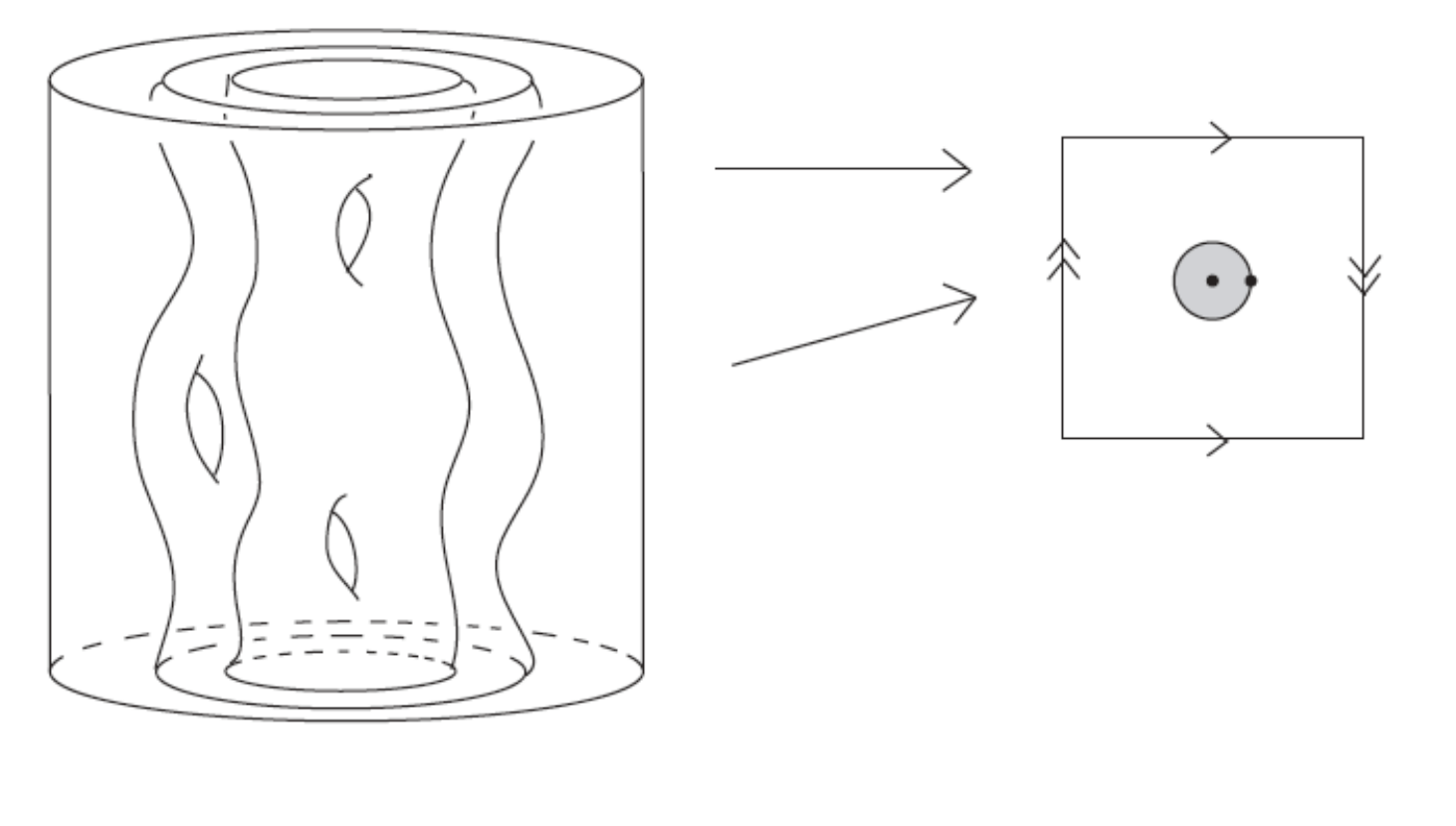}
\put(-255,155){$B$}
\put(-240,155){$A$}
\put(-255,10){$B'$}
\put(-240,10){$A'$}
\put(-120,133){$f$}
\put(-120,80){$\bar{f}$}
\put(-65,90){$D_p$}
\put(-40,110){\tiny{$q$}}
\caption{Surface pre-images under the map $\bar{f}$}
\label{fig:1122map}
\end{figure}

The Sato-Levine invariants of $J$ and $J'$, are the quantities $\bar{\mu}_J(1122) = \lk(A,B)$ and $\bar{\mu}_{J'}(1122) = \lk(A',B')$. Therefore, we wish to show that $\lk(A,B) \equiv \lk(A',B') $ $(\bmod 2)$. 

We construct the closure $\hat{W}$ from $W$ by first attaching 0-framed 2-handles to the meridians $\mu_1$ and $\mu_2$ of link $J$ and to meridians $\mu_1'$ and $\mu_2'$ of link $J'$ and then attaching a 4-handle $\B^4$ to the $S^3$ boundary component and a 0-handle $\B^4$ to the $-S^3$ boundary component. 

The curves $A$ and $B$ each bound surfaces in the 4-handle $B^4$. We call these surfaces $S$ and $T$. Similarly, the curves $A'$ and $B'$ each bound surfaces in the 0-handle $B^4$. We call these surfaces $S'$ and $T'$. Noting that $\bar{\mu}_J(1122) = \lk(A,B) = S \cdot T$ and $\bar{\mu}_{J'}(1122) = \lk(A',B')=S' \cdot T'$, we wish to show that $S \cdot T + S' \cdot T' \equiv 0 $ $(\bmod 2)$. Let $\hat{S} = S \cup_A \bar{f}^{-1}(p) \cup_{A'} S'$ and $\hat{T} = T \cup_B \bar{f}^{-1}(q) \cup_{B'} T'$ be closed surfaces in $\hat{W}$ as pictured in figure \ref{fig:21}. Because $\bar{f}^{-1}(p) \cap \bar{f}^{-1}(q) = \emptyset$, the problem reduces to showing that $\hat{S} \cdot \hat{T} \equiv 0 $ $(\bmod 2)$. 

\begin{figure}[ht!]
\centering
\includegraphics[scale=.6]{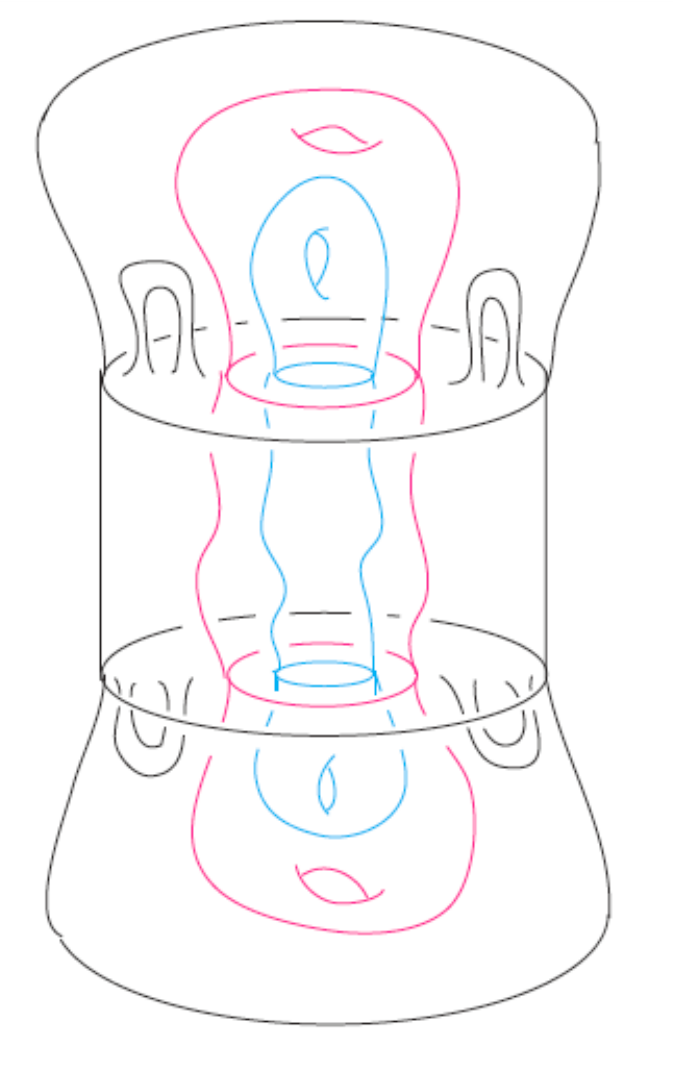}
\put(-130,240){$\hat{S}$}
\put(-155,260){$\hat{T}$}
\caption{Surfaces $\hat{S}$ and $\hat{T}$ in $\hat{W}$}
\label{fig:21}
\end{figure}

The surface $S \cup_A f^{-1}(\gamma (t)) \cup_B T \subset \B^4$ is a closed surface, and therefore is the boundary of a 3-chain. Similarly, the surface $S' \cup_{A'} f'^{-1}(\gamma (t)) \cup_{B'} T' \subset \B^4$ is a closed surface, and therefore is the boundary of a 3-chain. Piecing together these 3-chains, along with a 3-chain representing $\bar{f}^{-1}(\gamma[0,1])$, we see that $\hat{S}$ and $\hat{T}$ cobound a 3-chain, and so the homology classes $[\hat{S}] \in H_2(\hat{W})$ = $[\hat{T}] \in H_2(\hat{W})$ are equal. Finally, we recall that $\hat{W}$ is spin and $H_1(\hat{W})$ has no 2-torsion, so the intersection form $Q_{\hat{W}}$ in even. Thus, we have that $\hat{S} \cdot \hat{T} = Q_{\hat{W}}([\hat{S}],[\hat{T}]) = Q_{\hat{W}}([\hat{S}],[\hat{S}])$ is even. This tells us that $\bar{\mu}_J(1122) \cong \bar{\mu}_{J'}(1122) $ $(\bmod 2)$.
\end{proof}

Finally, we conclude the proof of Theorem \ref{thm:Main} by showing the following lemma.
 
 \begin{lem}\label{TandY}
 If $\link$ and $\linkprime$ are two ordered, oriented, $m$-component links with vanishing pairwise linking numbers such that
  \begin{enumerate}
 \item $\Arf(K_i)= \Arf(K_i')$, 
 \item $\bar{\mu}_L(ijk) = \bar{\mu}_{L'}(ijk)$,
 \item $\bar{\mu}_L(iijj) \equiv \bar{\mu}_{L'}(iijj) (\bmod{2}),$
 \end{enumerate}
 then $L$ and $L'$ are band-pass equivalent.
 \end{lem}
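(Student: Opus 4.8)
The plan is to exhibit, for each link $L\in\el^m$, a \emph{normal form} $N(L)$: a specific link that is band-pass equivalent to $L$ and depends only on the data $\big(\mathrm{Arf}(K_i),\ \bar\mu_L(ijk),\ \bar\mu_L(iijj)\bmod 2\big)$. Granting this, $L$ and $L'$ share that data by hypothesis, so $N(L)=N(L')$, whence $L\sim_{\mathrm{bp}}N(L)=N(L')\sim_{\mathrm{bp}}L'$. I would build $N(L)$ in two stages --- first normalizing the knot types of the components, then normalizing the ``linking'' part of the link --- using throughout the transitivity of band-pass equivalence and Lemmas \ref{Arf}, \ref{ijk}, \ref{iijj}, which guarantee that a band-pass move changes none of the three invariants.

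First I would standardize the component knot types. A band-pass move supported in a ball meeting only $K_i$ is exactly a pass move on the knot $K_i$, and by the classical classification of pass-equivalence of knots two knots are pass-equivalent if and only if they have equal Arf invariants. Hence, using band-pass moves in disjoint balls, I can replace $K_i$ by the unknot when $\mathrm{Arf}(K_i)=0$ and by a fixed local trefoil when $\mathrm{Arf}(K_i)=1$; since these changes happen inside balls meeting only $K_i$, they leave every $\bar\mu$-invariant of $L$ unchanged. As $\mathrm{Arf}(K_i)=\mathrm{Arf}(K_i')$ for all $i$, after this step $L$ and $L'$ carry identical local knots, which I set aside. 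It then suffices to treat links all of whose components are unknotted.

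So let $L$ have unknotted components and vanishing pairwise linking numbers. I would prove that band-pass moves carry $L$ to the \emph{model link} $N_0$ built from the $m$-component unlink by band-summing in, for each triple $i<j<k$, exactly $\bar\mu_L(ijk)$ signed copies of the Borromean tangle $B_{ijk}$ on strands $i,j,k$, and, for each pair $i<j$ with $\bar\mu_L(iijj)$ odd, one Bing-double clasp $W_{ij}$ on strands $i,j$; each $B_{ijk}$ contributes $\pm1$ to $\bar\mu(ijk)$ and nothing else, and each $W_{ij}$ contributes $\pm1$ to $\bar\mu(iijj)$ and nothing else, so $N_0$ realizes the prescribed invariants. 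The mechanism is to strip $L$ by ``linking depth'': degree-one content (crossing changes between distinct components) is trivial because all linking numbers vanish, the self-knotting was removed already, and band-pass moves can be used both to eliminate all higher-order linking (everything of ``degree $\ge 3$'') and to change the number of clasps $W_{ij}$ two at a time without disturbing anything else --- the latter witnessed by an explicit band-pass move on a two-component sublink that alters $lk(\gamma,\gamma^+)$, with $\gamma=\Sigma_i\cap\Sigma_j$ as in the geometric description of $\bar\mu_L(iijj)$, by exactly $\pm2$ while fixing all Seifert-surface triple points, the other Sato-Levine numbers, and all component knot types. What survives is precisely the degree-two data, so $L\sim_{\mathrm{bp}}N_0$. (Alternatively, one could invoke the classification of links up to the finer clasp-pass equivalence --- by component Arf invariants, linking numbers, $\bar\mu(ijk)$, and the \emph{integral} $\bar\mu(iijj)$ --- and then use the $\pm2$ band-pass move to absorb the mod-$2$ discrepancy permitted by hypothesis.)

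Reinstating the local trefoils from the first stage, $L$ is band-pass equivalent to the link $N(L)$ obtained from $N_0$ by connect-summing a trefoil into each $K_i$ with $\mathrm{Arf}(K_i)=1$; this $N(L)$ is a function of $\big(\mathrm{Arf}(K_i),\ \bar\mu_L(ijk),\ \bar\mu_L(iijj)\bmod 2\big)$ alone, so the three hypotheses give $N(L)=N(L')$ and the lemma follows. The substantive obstacle is the claim in the previous paragraph: that band-pass moves are strong enough to annihilate all order $\ge 3$ linking and to move the integral Sato-Levine invariants in steps of two, and that the moves accomplishing this leave $\bar\mu_L(ijk)$ and the other $\bar\mu_L(iijj)$ alone. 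I expect this is cleanest in the language of claspers or Whitney towers, exploiting that a band-pass move is, up to isotopy, a controlled composite of clasper surgeries; pinning down exactly which surgeries, and in which degrees, a band-pass move yields is the heart of the matter.
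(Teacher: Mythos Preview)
Your overall strategy---reduce both links to a normal form determined solely by the three invariants---is the paper's strategy too, and the paper also leans on Taniyama--Yasuhara's clasp-pass work. But you have the clasp-pass invariants backwards, and this misdirects the rest of your argument.

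Taniyama--Yasuhara's theorem says $L$ and $L'$ are \emph{clasp-pass} equivalent if and only if $a_2(K_i)=a_2(K_i')$ \emph{as integers}, $a_3(K_i\cup K_j)\equiv a_3(K_i'\cup K_j')\pmod 2$, and $\bar\mu_L(ijk)=\bar\mu_{L'}(ijk)$. When linking numbers vanish one has $a_3(K_i\cup K_j)=\bar\mu_L(iijj)$, so the Sato--Levine data already enters only $\bmod\,2$ at the clasp-pass level: there is no ``integral $\bar\mu(iijj)$'' to absorb, and your hunt for a band-pass move changing $\bar\mu(iijj)$ by $\pm 2$ is unnecessary. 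The actual gap between the hypotheses of this lemma and the clasp-pass hypotheses lies in the \emph{knot} invariant: you are given only $\mathrm{Arf}(K_i)=\mathrm{Arf}(K_i')$, i.e.\ $a_2\bmod 2$, not the integral $a_2$.

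Ironically, your Step~1 already closes precisely this gap. Once pass moves on individual components have made each $K_i$ and $K_i'$ literally an unknot or a trefoil (according to the common Arf invariant), you have $a_2(K_i)=a_2(K_i')$ as integers, and all three Taniyama--Yasuhara hypotheses now hold. The normalized links are therefore clasp-pass equivalent, hence band-pass equivalent, and you are finished---no ``stripping by linking depth,'' no clasper calculus, no $\pm 2$ move. You should, however, say why the pass moves of Step~1 can be taken in balls missing the other components: choose a Seifert surface for $K_i$ in $S^3\setminus(L\setminus K_i)$ (possible since linking numbers vanish) and run Kauffman's pass-equivalence reduction on its bands. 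The paper takes the parallel route in the opposite order: it first invokes Taniyama--Yasuhara to put $L$ and $L'$ into Borromean-chord normal form up to clasp-pass, and then uses one extra band-pass identity---that the trefoil and figure-eight type-$(i)$ chords are band-pass equivalent, since $a_2(\text{trefoil})\equiv a_2(\text{figure eight})\pmod 2$---to collapse the integral $a_2$ constraint down to $\mathrm{Arf}$.
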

 
 \begin{proof}
 We will use the result and proof of the following theorem due to Taniyama and Yasuhara \cite{TY}.
 
 \begin{thm}{\emph{[Taniyama-Yasuhara] \cite{TY} }}
 Let $\link$ and $\linkprime$ be ordered, oriented, $m$-component links with vanishing pairwise linking numbers. The following conditions are equivalent:
\begin{enumerate}
\item $L$ and $L'$ are clasp-pass equivalent links,
\item $a_2(K_i) = a_2(K'_i)$\\
$a_3(K_i \cup K_j) \equiv a_3(K_i' \cup K_j') $ $(\bmod 2)$ \\ 
and $\bar{\mu}_L(ijk) = \bar{\mu}_{L'}(ijk),$
\end{enumerate}
\end{thm}

Here, $a_j$ indicates the $j^{th}$ coefficient of the Conway polynomial, as defined in \cite{Wu}. Taniyama and Yasuhara prove the direction (2) $\Rightarrow$ (1). In the following proposition, we show that the three assumptions in Lemma \ref{TandY}, $\Arf(K_i)= \Arf(K_i')$, 
 $\bar{\mu}_L(ijk) = \bar{\mu}_{L'}(ijk)$, and
$\bar{\mu}_L(iijj) \equiv \bar{\mu}_{L'}(iijj) (\bmod{2}),$  are very similar to Taniyama and Yasuhara's assumptions that $a_2(K_i) = a_2(K'_i)$,
$a_3(K_i \cup K_j) \equiv a_3(K_i' \cup K_j') $ $(\bmod 2)$,
and $\bar{\mu}_L(ijk) = \bar{\mu}_{L'}(ijk).$ 

\begin{proposition}\label{prop:Conway}
Let $\link$ be an ordered, oriented, $m$-component link with vanishing pairwise linking numbers. Then,
\begin{enumerate}
\item $\Arf(K_i)$ is given by $a_2(K_i) ( \bmod{2} )$ (Kauffman, \cite{Kauff}), and
\item $\bar{\mu}_L(iijj) = a_3(K_i \cup K_j) $.
\end{enumerate}
\end{proposition}

\begin{proof}
Here, $a_j$ refers to the coefficient of the term $z^j$ in the Conway polynomial of a link, $\Delta_L(z) = a_0 + a_1z + a_2z^2 + a_3z^3 + \dots$, as defined in \cite{Wu}. In \cite{Kauff}, Kauffman shows that for a knot $K$, $a_2(K) (\bmod 2) = \Arf(K)$. 

In \cite{Tim2}, Cochran writes the general Conway polynomial of a link $L$ as $\Delta_L(z) = z^{m-1}(b_0 + b_2z^2 + \dots + b_{2n}z^{2n})$ where $m$ is the number of components of $L$. Using Cochran's notation for 2-component sublink $J = K_i \cup K_j$ of $L$, $\Delta_J(z) = z(b_0 + b_2z^2 + \dots +b_{2n}z^{2n})$. Therefore, the cubic term of the Conway polynomial has coefficient $b_2$, so $a_3(J)=b_2(J)$. Then Corollary 4.2 in \cite{Tim2} gives that, for a 2-component link $J$, the coefficient $b_0(J) = -\bar{\mu}_J(12)$, and if $b_0(J) = 0$, then $b_2(J)= \bar{\mu}_J(1122)$. Since we are assuming that all pairwise linking numbers of $L$ vanish, we have that $\bar{\mu}_J(1122) = b_2(J) = a_3(J)$. Thus, for all $i$ and $j$, $\bar{\mu}_L(iijj) =a_3(K_i \cup K_j)$.
\end{proof}

To complete the proof of Lemma \ref{TandY}, we adapt Taniyama and Yasuhara's proof to show the following proposition.

\begin{proposition}\label{prop:TY}
For two ordered, oriented, $m$-component links $\link$ and $\linkprime$ with vanishing pairwise linking numbers, if 
\begin{enumerate}
\item $a_2(K_i) \equiv a_2(K_i') (\bmod 2)$,
\item$a_3(K_i \cup K_j) \equiv a_3(K_i' \cup K_j') (\bmod{2})$, and
\item $\bar{\mu}_L(ijk) = \bar{\mu}_{L'}(ijk)$,
\end{enumerate}

then $L$ and $L'$ are band-pass equivalent.
\end{proposition}

\begin{proof}
Given that all pairwise linking numbers of $L$ and $L'$ vanish, Taniyama and Yasuhara show that that $L$ and $L'$ can both be obtained from $U^m$ by inserting a sequence of Borromean rings, as pictured in the following skein relation. 

\begin{figure}[ht!]
\centering
\includegraphics[scale=.4]{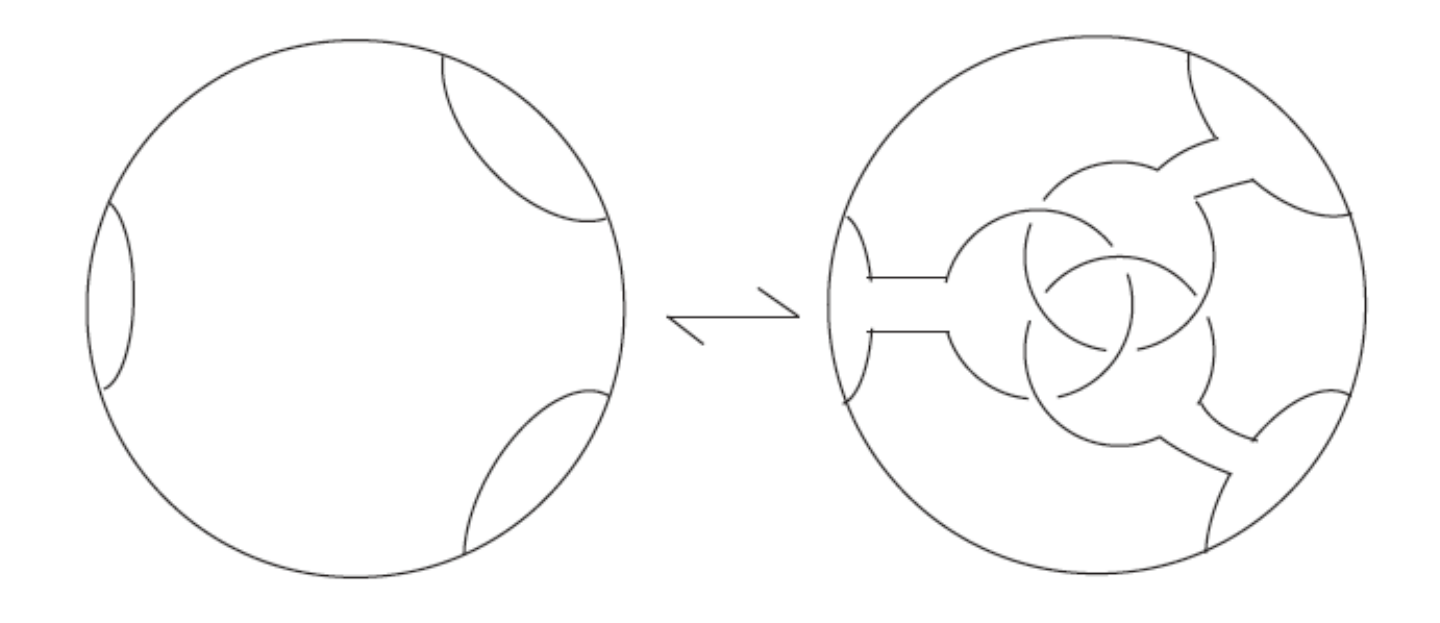}
\caption{A Borromean Rings Insertion}
\end{figure}

We note that the feet of the Borromean rings insertion can attach to any part of the link. In this way, a Borromean rings insertion may involve one, two, or three distinct link components. 

\begin{definition}{[Taniyama-Yasuhara]}

A \emph{Borromean chord} $C$ is a neighborhood of a 3-ball containing a Borromean insertion and a neighborhood of its attaching bands. 

\begin{figure}[ht!]
\centering
\includegraphics[scale=.4]{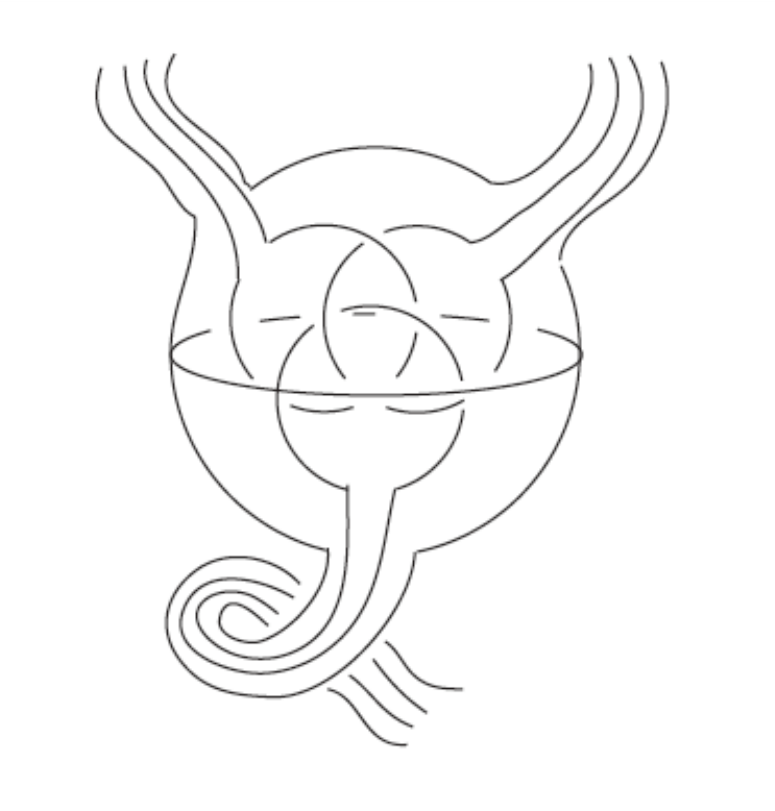}
\caption{Borromean Chord}
\end{figure}

For distinct $i,j,k$, we say that a chord $C$ is of \emph{type (i)} if each of the bands in the Borromean ring insertion attach to the $i^{th}$ link component. $C$ is of \emph{type (ij)} if the bands attach to the $i^{th}$ and $j^{th}$ components, and $C$ is of \emph{type (ijk)} if the bands attach to the $i^{th}, j^{th},$ and $k^{th}$ link components.
\end{definition}

In \cite{TY} Lemma 2.5, Taniyama and Yasuhara show that any ordered, oriented, $m$-component link $L$ is clasp-pass equivalent to an ordered, oriented, $m$-component link $J$, where $J$ is formed from $U^m$ by Borromean ring insertion, where $J$ is of the following form.

\begin{enumerate}

\item Each Borromean chord of $J$ of type $(ijk)$ is contained in a 3-ball as illustrated in Figure \ref{fig:typeijk} (a) or (b), and for each set of components $i,j,k$, not both (a) and (b) occur.

\begin{figure}[ht!]
\centering
\includegraphics[scale=.35]{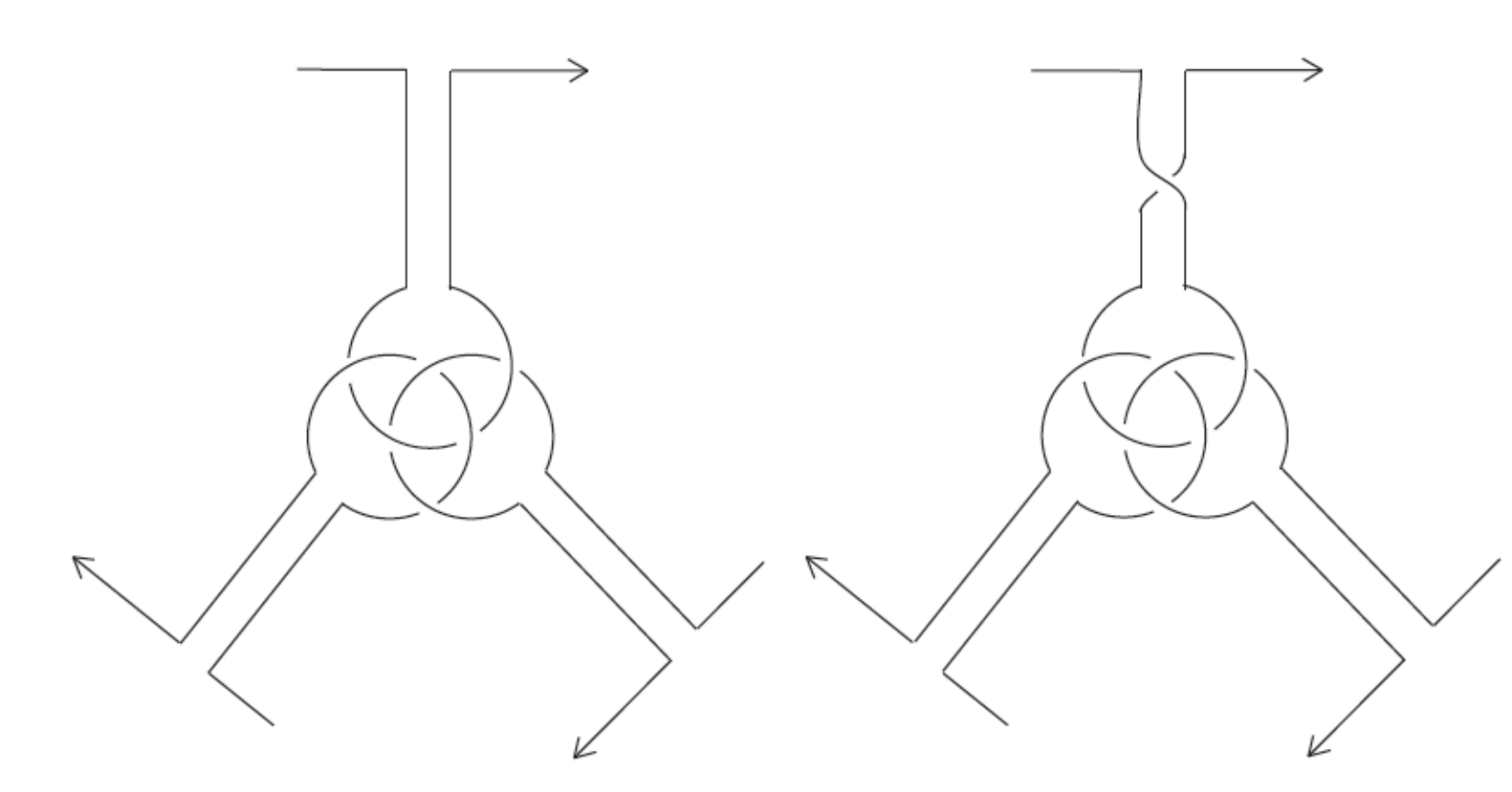}
\put(-300,70){(a)}
\put(-130,70){(b)}
\caption{Borromean chords of type $(ijk)$.}
\label{fig:typeijk}
\end{figure}

\item Each Borromean chord of $J$ of type $(ij)$ is contained in a 3-ball as illustrated in Figure \ref{fig:typeij} (c).

\begin{figure}[ht!]
\centering
\includegraphics[scale=.19]{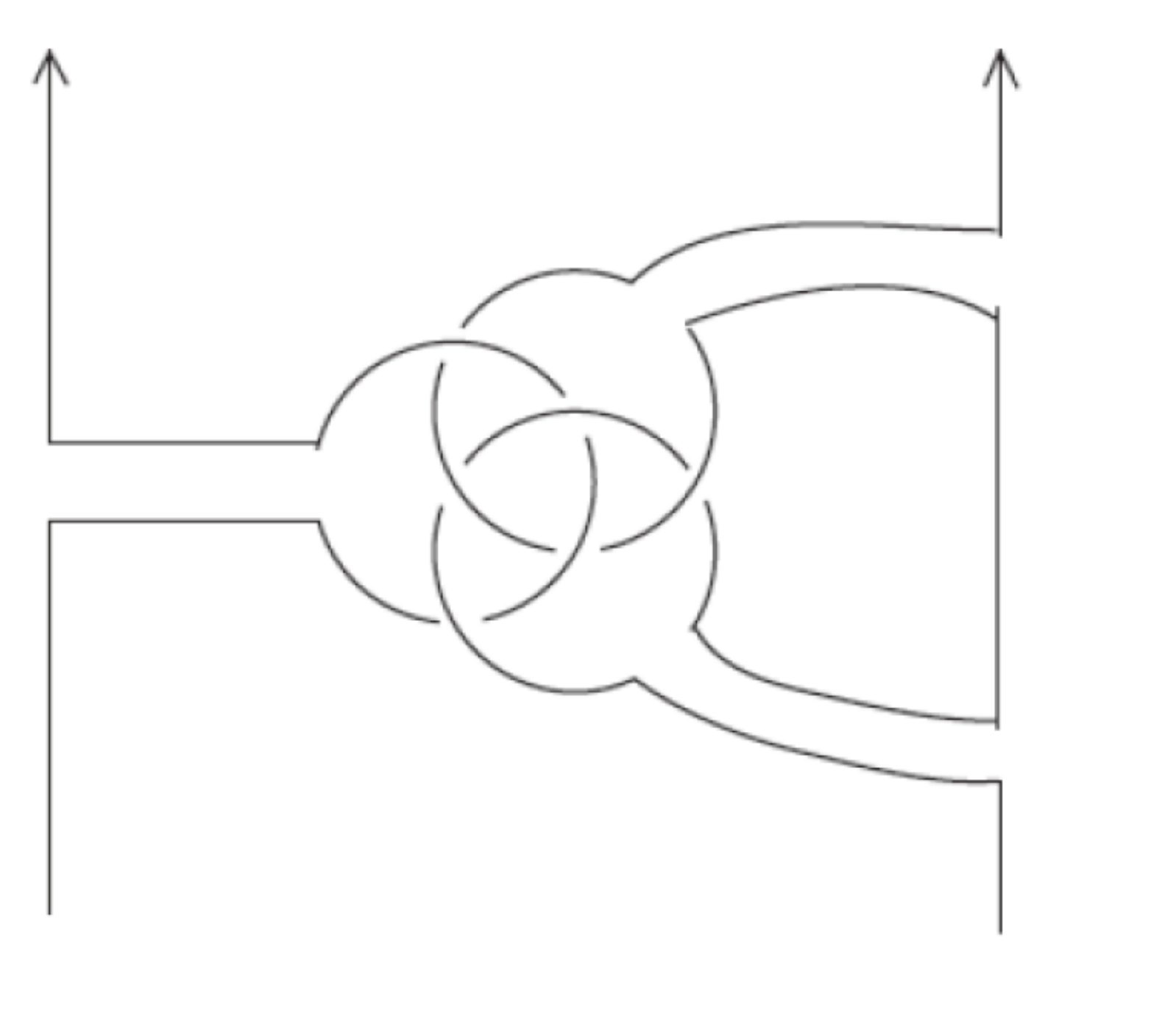}
\put(-180,60){(c)}
\put(-140,80){$K_j$}
\put(-38,60){$K_i$}
\caption{Borromean chords of type $(ij)$}
\label{fig:typeij}
\end{figure}

Furthermore, Taniyama and Yasuhara show in Lemma 2.5 that any two Borromean chords of type $(ij)$ cancel each other \cite{TY}. Therefore, for each $i < j \le m$, we may have at most one Borromean chord of type $(ij)$ as in Figure \ref{fig:typeij} (c).

\item Each Borromean chord of $J$ of type $(i)$ is contained in a 3-ball as illustrated in (e) or (f), and for each component, not both (e) and (f) occur. 

\begin{figure}[ht!]
\centering
\includegraphics[scale=.35]{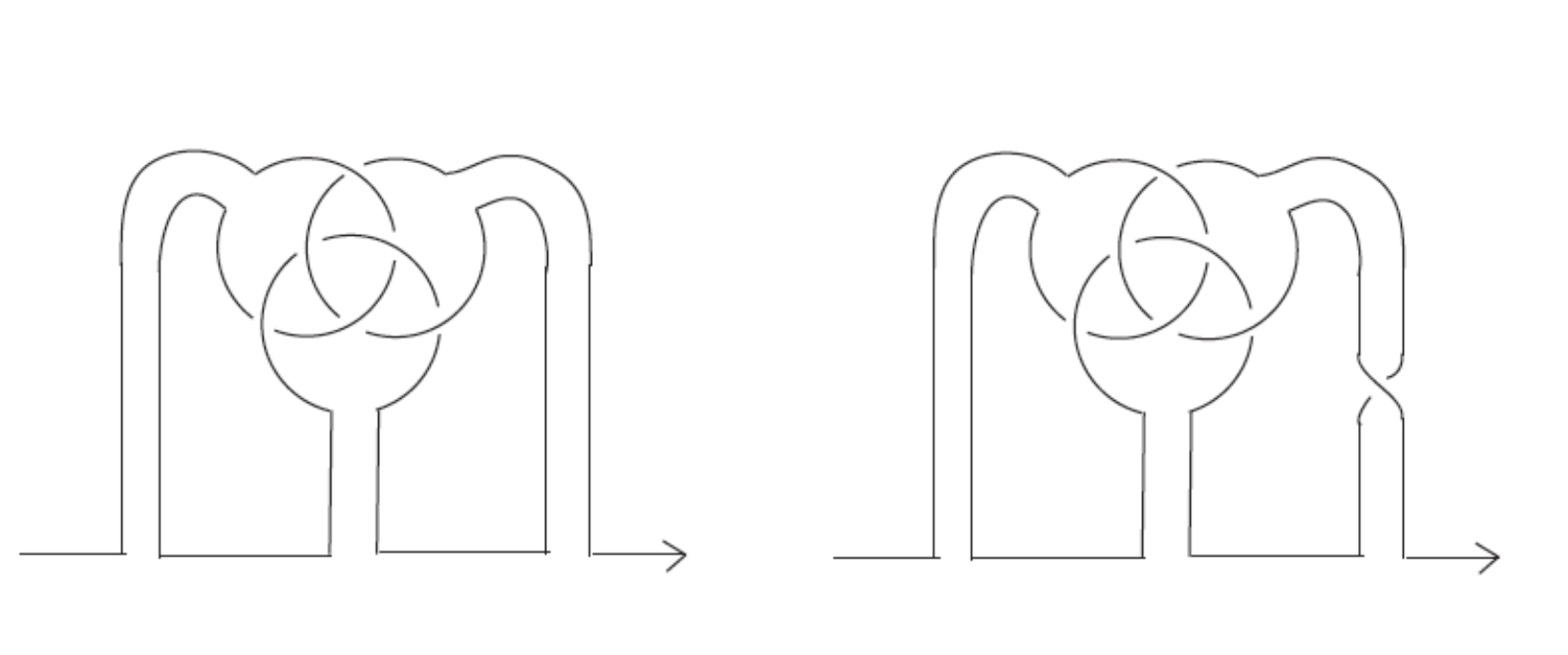}
\put(-280,50){(e)}
\put(-130,50){(f)}
\caption{Borromean chords of type $(i)$}
\label{fig:typei}
\end{figure}
\end{enumerate}

Therefore, let link $L$ be clasp-pass equivalent to a link $J$ of the above form, and let link $L'$ be clasp-pass equivalent to a link $J'$ of the above form. Using our assumptions on link $L$ and $L'$, we claim that $J $ and $J'$ are band-pass equivalent. 

From this construction, we see that all triple points contributing to $\bar{\mu}_L(ijk)$ come from the Borromean ring insertions; thus, $\bar{\mu}_L(ijk)$ is the signed number of Borromean chords of type $(ijk)$ in $J$, with sign $+1$ for chords as in figure (a) and sign $-1$ for chords as in figure (b). Given that $\bar{\mu}_L(ijk) = \bar{\mu}_{L'}(ijk)$, we know that link $J$ and link $J'$ have identical Borromean chords of type $(ijk)$.

Cited by Taniyama and Yasuhara \cite{TY} and due to a result of Hoste \cite{Hoste}, for a link $\link$, $a_3(K_i \cup K_j) \equiv 0$ $( \bmod \, 2 ) $ if and only if there are an even number of Borromean chords of type $(ij)$. Therefore, as we are assuming that $a_3(K_i \cup K_j) \equiv a_3(K_i' \cup K_j'
) $ $(\bmod 2)$, we know that for every choice of $i$ and $j$, either $J$ and $J'$ both have one Borromean chord of type $(ij)$ or they both have none.

Links $L$ and $J$ are clasp-pass equivalent, and links $L'$ and $J'$ are clasp-pass equivalent. The coefficient $a_2$ is preserved under clasp-pass equivalence \cite{TY}. In figure \ref{fig:typei}, the closure of (e) is a trefoil knot, and the closure of (f) is the figure eight knot. The invariants $a_2(\text{trefoil}) = 1$ and $a_2(\text{figure eight})=-1$. As the coefficient $a_2$ is additive under the connected sum of knots, the number of Borromean chords of type $(i)$ in link $J$ is given by $|a_2(K_i)|$, and the number of Borromean chords of type $(i)$ in link $J'$ is given by $|a_2(K_i')|$. 

Since $a_2(\text{trefoil}) \equiv a_2(\text{figure eight}) $ $(\bmod 2)$, (e) and (f) are band-pass equivalent \cite{Kauff}. Therefore, we may assume that all Borromean chords of both $J$ and $J'$ of type $(i)$ are as illustrated in figure \ref{fig:typei} (e). By enforcing the condition that $a_2(K_i) \equiv a_2(K_i') \bmod 2$, we are assuming that links $J$ and $J'$ have the same parity of Borromean chords of type $(i)$, all as in figure \ref{fig:typei} (e). In Lemma 2.5, Taniyama and Yasuhara show that any two such Borromean chords will cancel (see in particular, \cite{TY} figure 15). Therefore, for each $i$, we may assume that $J$ and $J'$ both have either one or zero Borromean chords of type $(i)$.

Therefore, we see that links $J$ and $J'$ are band-pass equivalent. We have that $L \sim_{\text{Clasp-Pass}} J \sim_{\text{Band-Pass}} J' \sim_{\text{Clasp-Pass}} L'$. Recalling that a clasp-pass move is also a band-pass move, we conclude that $L$ and $L'$ are band-pass equivalent.  
\end{proof}
Summarizing, we have shown that if $\link$ and $\linkprime$ are ordered, oriented, $m$-component links with vanishing pairwise linking numbers, and if $\Arf(K_i) = \Arf(K_i'), \bar{\mu}_L(ijk)=\bar{\mu}_{L'}(ijk)$, and $\bar{\mu}_L(iijj) \equiv \bar{\mu}_{L'}(iijj) ( \bmod{2} )$, then Proposition \ref{prop:Conway} tells us that $L$ and $L'$ satisfy the conditions of Proposition \ref{prop:TY}, which gives us that $L$ and $L'$ are band-pass equivalent. This completes the proofs of Lemma \ref{TandY} and Theorem \ref{thm:Main}.
\end{proof}

\section{Applications of the Main Theorem}

As a direct result of Theorem \ref{thm:Main}, the 0-solve equivalence class of a link $\link$ is characterized by the three algebraic invariants, $\Arf(K_i), \bar{\mu}_L(ijk),$ and $\bar{\mu}_L(iijj)$.  

\begin{corollary}\label{for:classify} The set $\LO^m = \el^m / \sim_0$ of concordance classes of $m$-component links with vanishing pairwise linking numbers up to 0-solve equivalence forms an abelian group under band sum such that, for each $m$, the natural map from the quotient $\frac{\F_{-0.5}^m}{\F_0^m}$ of string links with pairwise linking number 0 mod 0-solvable links to $\LO^m$ induces an isomorphism, and
$$ \LO^m \cong \frac{\F_{-0.5}^m}{\F_0^m} \cong \Z_2^m \oplus \Z^{m \choose 3} \oplus \Z_2^{m \choose 2}$$
\end{corollary}
 \begin{proof}

Consider the following as a diagram of sets, where $\F_{-0.5}^m$ is the set of concordance classes of string links with vanishing pairwise linking numbers, $\el^m / _\sim$ is the set of concordance classes of $m$-component links, $\pi$ is the standard quotient map, and $\wedge$ is the map taking a string link to its closure: 

\[
\begin{diagram}
\node{\F^m_{-0.5}} \arrow{e,t}{\wedge} \arrow{s,r}{\pi}
\node{\el^m/ _\sim} \arrow{s,r}{\bmod \sim_0} 
\\
\node{\frac{\F^m_{-0.5}}{\F^m_0}} \arrow{e,t}{\wedge}
\node{\LO^m} 
\end{diagram}
\] \\

\noindent We will show that two string links $A$ and $B$ are equivalent in $\frac{\F^m_{-0.5}}{\F^m_0}$ if and only if their closures $\hat{A}$ and $\hat{B}$ belong to the same 0-solve equivalence class in $\LO^m$. Suppose that $[A]=[B]$ in $\frac{\F^m_{-0.5}}{\F^m_0}$. Thus, $B = S \cdot A$ for some 0-solvable string link $S \in \F^m_0$ whose closure $\hat{S}$ is a 0-solvable link. We show that $\hat{A}$ and $\widehat{S \cdot A}$ are 0-solve equivalent links. First, the Arf invariant is additive under connected sum, and each component of $\hat{S}$ has Arf invariant zero, so the components of $\hat{A}$ and $\widehat{S \cdot A}$ have the same Arf invariants. Theorem 8 of \cite{Orr} shows that the first nonvanishing Milnor invariants are well-defined and additive under stacking. Thus, because all pairwise linking numbers of $\hat{A}$ and $\hat{B}$ vanish, $\bar{\mu}_{\widehat{S \cdot A}}(ijk)$ are well-defined, so $\bar{\mu}_{\hat{B}}(ijk) = \bar{\mu}_{\widehat{S \cdot A}}(ijk) = \bar{\mu}_{\hat{S}}(ijk) + \bar{\mu}_{\hat{A}}(ijk) = \bar{\mu}_{\hat{A}}(ijk)$. Finally, the values of $\bar{\mu}_{\widehat{S \cdot A}}(iijj)$ are determined by the 2-component $ij^{th}$ sublinks of $\widehat{S \cdot A}$. As two component links have no $\bar{\mu}(ijk)$, and the linking number of these two component sublinks vanish, the first nonvanishing Milnor's invariants, $\bar{\mu}(iijj)$ for each 2-component sublink of $\widehat{S\cdot A}$, are well-defined. Therefore, since $\bar{\mu}_{\hat{S}}(iijj) \equiv 0 \bmod 2$, we have that $\bar{\mu}_{\hat{B}}(iijj) = \bar{\mu}_{\widehat{S \cdot A}}(iijj) \equiv \bar{\mu}_{\hat{A}}(iijj) \bmod 2$. Thus, by Theorem \ref{thm:Main}, $\hat{B} = \widehat{S \cdot A}$ and $\hat{A}$ are 0-solve equivalent links. 

Similarly, we know that if links $\hat{A}$ and $\hat{B}$ are 0-solve equivalent, their components have equal Arf invariants, each choice of corresponding 3-component sublinks have equal $\bar{\mu}(123)$, and each choice of corresponding 2-component sublinks have $\bar{\mu}(1122)$ congruent mod 2. Using the additivity of these invariants as described above, we see that, for each component $K_i$ of the link $\widehat{AB^{-1}}$, $\Arf(K_i) = 0 $, for each 3-component sublink $J = K_i \cup K_j \cup K_k$ of $\widehat{AB^{-1}}$, $\bar{\mu}_{J}(123) = 0$, and for each 2-component sublink $N = K_i \cup K_j$ of $\widehat{AB^{-1}}$, $\bar{\mu}_N(1122)$ is even. Therefore, the link $\widehat{AB^{-1}}$ is 0-solvable, so the string link $AB^{-1}$ is 0-solvable. Thus, the string links $A$ and $B$ are equivalent in $\frac{\F^m_{-0.5}}{\F^m_0}$. This gives us a bijection between $\frac{\F^m_{-0.5}}{\F^m_0}$ and $\LO^m$.

As a result of Theorem \ref{thm:Main}, band sum is a well defined operation on $\LO^m$. With this operation, we see that the map $\wedge:\F^m_{-0.5}/\F_0^m \rightarrow \LO^m$ is a homomorphism, and thus an isomorphism.

Finally, we define a map $\psi : \F^m_{-0.5} \rightarrow \Z_2^m \oplus \Z^{m \choose 3} \oplus \Z_2^{m \choose 2}$ on an $m$-component string link $L$ with link components $K_1, \dots, K_m$ by  $$\psi(L) = (a_1, \dots, a_m, b_1, \dots, b_{m \choose 3},c_1,\dots,c_{m \choose 2}) \in \Z_2^m \oplus \Z^{m \choose 3} \oplus \Z_2^{m \choose 2}$$ where $a_i = \Arf (K_i)$, $b_k = \bar{\mu}_{J_k}(123)$ for $J_k$ the $k^{th}$ lexicographic three component sub link of $L$, and $c_n = \bar{\mu}_{T_n}(1122)$ for $T_n$ the $n^{th}$ lexicographic two component sub link of $L$.  Thus, $\ker(\psi) = \F^m_0$, and the additivity of invariants shows that $\psi$ is a homomorphism. 

Further, $\psi$ is surjective because each element in the codomain can be realized by stacking string links. The trefoil knot has Arf invariant 1. The Whitehead link has $\bar{\mu}_L(1122) = 1$. The Borromean rings have $\bar{\mu}_L(123)= \pm 1$. These links are pictured in figure \ref{fig:26}. Let $\vec{x} = (a_1, \dots, a_m, b_1, \dots, b_{m \choose 3},c_1,\dots,c_{m \choose 2}) \in \Z_2^m \oplus \Z^{m \choose 3} \oplus \Z_2^{m \choose 2}$. For the $m$-tuple $(a_1, \dots, a_m)$ with $a_r \in \Z_2$, form an $m$-component string link whose strands do not interact but where each strand is either unknotted (in the case where $a_r = 0$) or closes to the trefoil (in the case where $a_r = 1$). Then, for each element of $\{b_r\}_{r=1}^{m \choose 3}$ representing a choice of 3 components $i,j,k$, form a string link in which the $i,j,$ and $k^{th}$ strands form $b_r$ copies of the Borromean rings string link and all other strands are unknotted and unlinked. Finally, each element of $\{c_r\}_{r=1}^{m \choose 2}$ where $c_r =1$ represents a choice of 2 components $i,j$. We form a string link in which the $i^{th}$ and $j^{th}$ strands form the Whitehead string link and all other strands are unknotted and unlinked. We stack these string links together into a string link $S$, and we see that  $\psi(S)= \vec{x}$. Therefore, $\frac{\F^m_{-0.5}}{\F^m_0} \cong  \Z_2^m \oplus \Z^{m \choose 3} \oplus \Z_2^{m \choose 2}$.
 \end{proof}
 
 \begin{figure}[ht!]
 \centering 
\includegraphics[scale=.35]{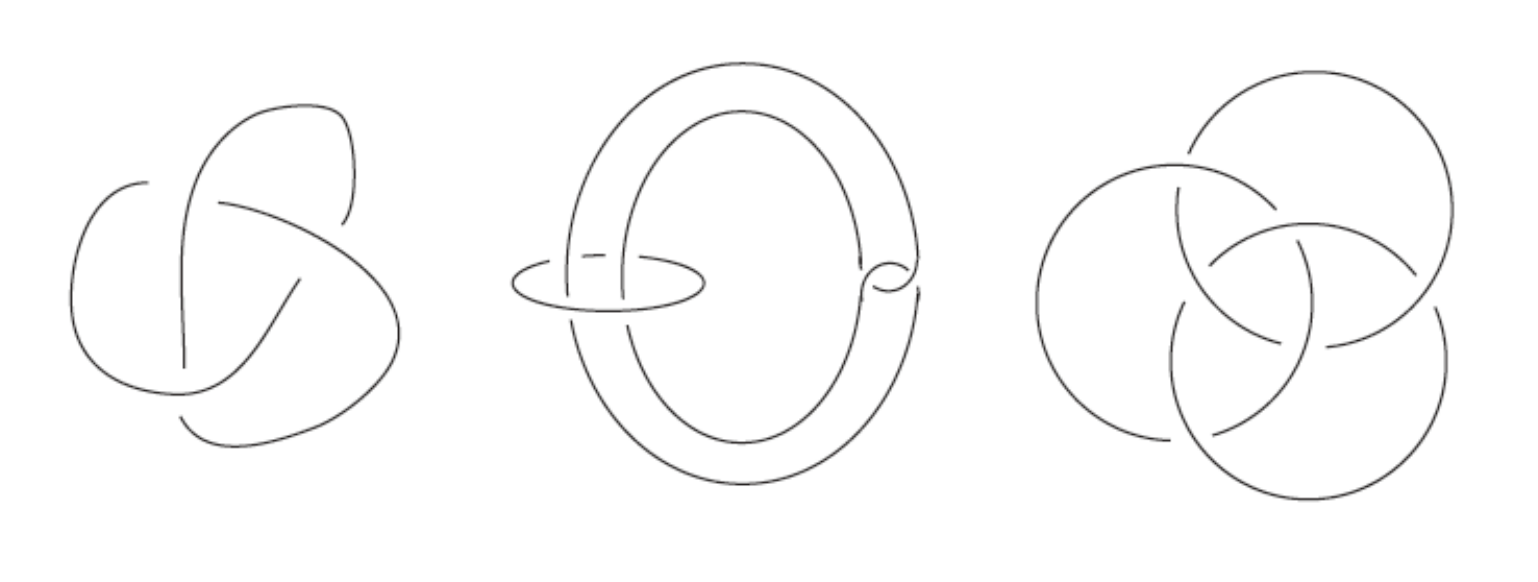}
\put(-250,0){$\Arf(K)=1$}
\put(-165,0){$\bar{\mu}_L(1122)=1$}
\put(-70,0){$\bar{\mu}_L(123)= \pm 1$}
\caption{Trefoil, Whitehead Link, and Borromean Rings}
\label{fig:26}
\end{figure}

 We can use this corollary to choose representatives for each 0-solve equivalence class of $m$-component links. For each element $(a_1, \dots, a_m, b_1, \dots, b_{m \choose 3},c_1,\dots,c_{m \choose 2}) \in \Z_2^m \oplus \Z^{m \choose 3} \oplus \Z_2^{m \choose 2}$, we choose a link representative $J = J_1 \cup \dots \cup J_m$ in the following way. The $i^{th}$ component of the representative will be either the unknot or the trefoil knot, according to if $a_i$ is 0 or 1. We order the triples of components $(J_i,J_j,J_k)$ lexicographically. For the $i^{th}$ triple $(J_i,J_j,J_k)$, link components, we insert $|c_i|$ Borromean rings with sign corresponding to the sign of $c_i$. We also order the pairs of components $(J_i,J_j)$ lexicographically. For the $i^{th}$ pair $(J_i,J_j)$, components $J_i$ and $J_j$ form an unlink or a Whitehead link according to if $c_i$ is 0 or 1. 

For example, when $m = 2$, we have an eight element group, $\C^2 / \F_0^2 \cong \Z_2^2 \oplus \Z_2,$ which are represented by the eight two-component links in figure \ref{fig:27}.

\begin{figure}[ht!]
\centering
\includegraphics[scale=.65]{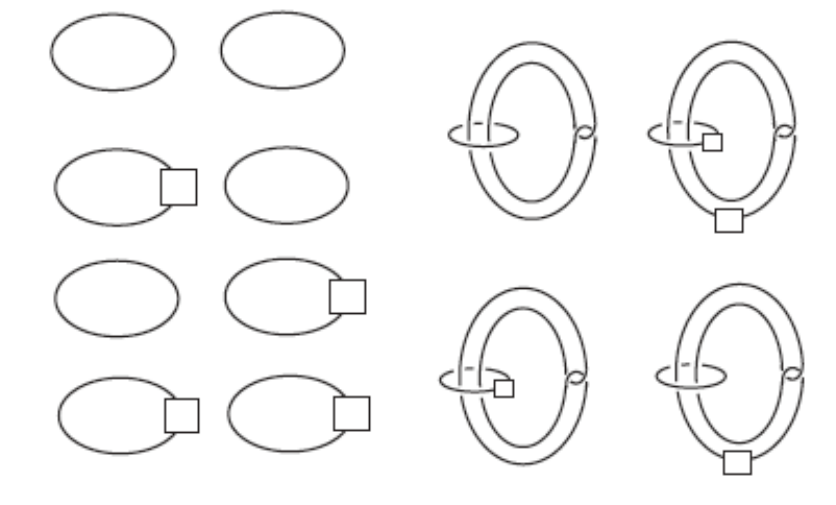}
\put(-280,140){(0,0,0)}
\put(-280,100){(1,0,0)}
\put(-280,60){(0,1,0)}
\put(-280,30){(1,1,0)}
\put(-208,96){\small T}
\put(-208,25){\small T}
\put(-154,25){\small T}
\put(-156,62){\small T}
\put(-110,150){(0,0,1)}
\put(-47,150){(1,1,1)}
\put(-110,0){(1,0,1)}
\put(-45,0){(0,1,1)}
\put(-36,87){\tiny T}
\put(-33,12){\tiny T}
\put(-41,111){\tiny T}
\put(-106,35){\tiny T}
\caption{2-component links up to 0-solve equivalence}
\label{fig:27}
\end{figure}

Figure \ref{fig:28} shows the 3-component link representing $(0,0,0,1,1,1,1) \in \Z_2^3 \oplus \Z \oplus \Z_2^3$.

 \begin{figure}[ht!]
 \centering
\includegraphics[scale=.3]{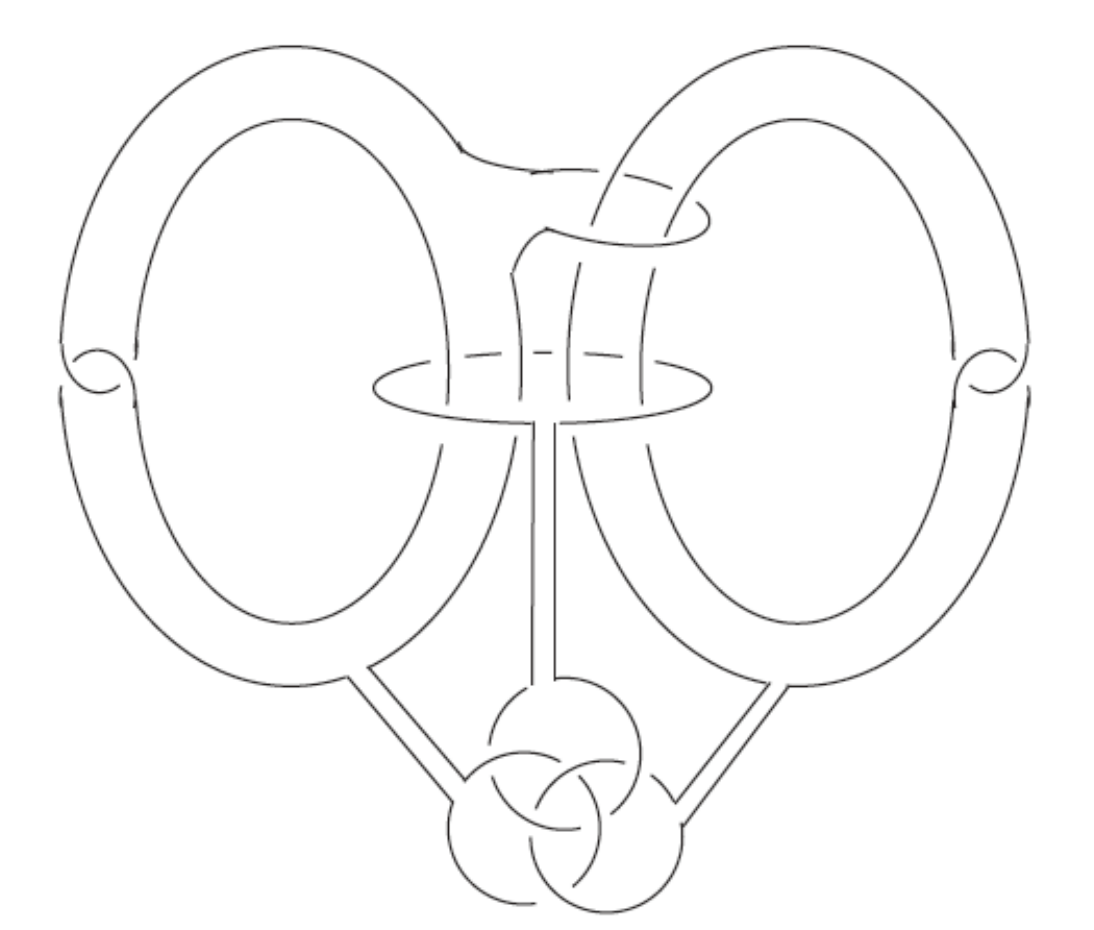}
\caption{Example: A 3-component 0-solve equivalence class}
\label{fig:28}
\end{figure}

Theorem \ref{thm:Main} has a direct application to the study of \emph{gropes} and \emph{Whitney towers}. Gropes and Whitney towers are geometric objects that play an important role in the study of 4-manifolds. Results due to Conant, Schneiderman, and Teichner will expand the statement of Theorem \ref{thm:Main}. A \emph{grope} is an oriented 2-complex created from joining oriented surfaces together in a prescribed way. Gropes have a natural complexity known as a \emph{class}. We give a definition from \cite{Rob}.

\begin{definition}
A \emph{grope} is a pair (2-complex, $S^1$) with a \emph{class} in $\N$. A class 1 grope is defined to be the pair $(S^1,S^1)$. A class 2 grope $(S, \bd S)$ is a compact oriented connected surface $S$ with a single boundary component. For $n >2$, a class $n$ grope is defined recursively. For a class 2 grope $(S, \bd S)$, let $\{\alpha_i,\beta_i\}_{i=1}^g$ be a symplectic basis for $H_1(S)$. For any $a_i,b_i \in \N$ such that $a_1+b_1 = n$ and $a_i + b_i \ge n$, a class $n$ grope is formed by attaching a class $a_i$ grope to $\alpha_i$ and a class $b_i$ grope to $\beta_i$. 
\end{definition}

\begin{figure}[ht!]
\centering
\includegraphics[scale=.35]{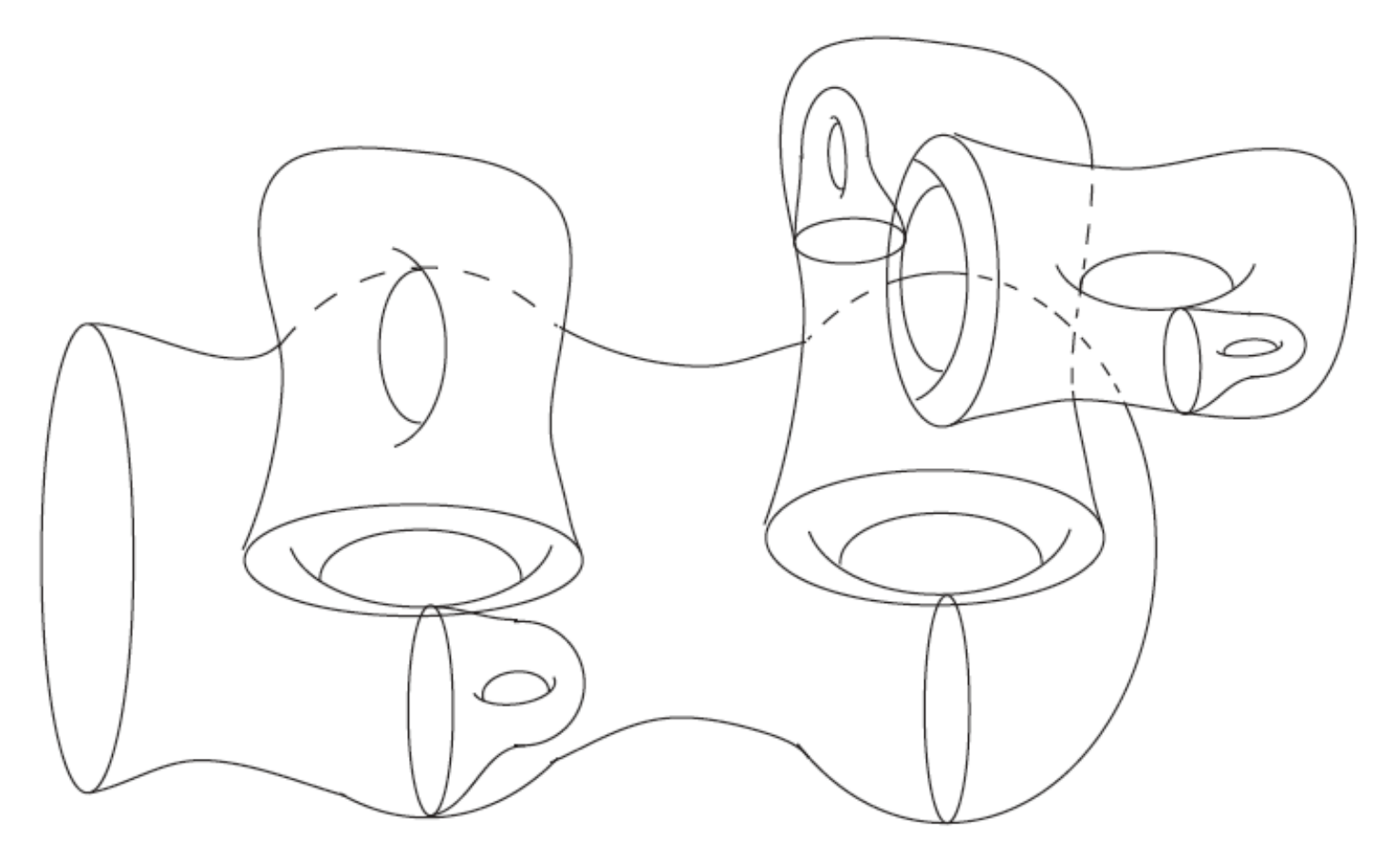}
\caption{A grope of class 4}
\label{fig:grope}
\end{figure}

We will consider gropes smoothly embedded in $\B^4$ with the boundary of the grope embedded in the $S^3 = \bd \B^4$. Therefore, the boundary of a grope will be a knot. We may then consider $m$ disjointly embedded gropes, which together are bounded by an $m$-component link.

One reason that 4-manifolds are not well understood is that the \emph{Whitney move} to eliminate intersections of immersed surfaces fails in four dimensions. To perform a Whitney move, we find an embedded \emph{Whitney disk} $W_{(I,J)}$ that pairs intersection points of two surface sheets $I$ and $J$ in a 4-manifold. We then change one surface, using $W_{(I,J)}$ as a guide. This is pictured in figure \ref{fig:Whit}. In 4-dimensions, we can eliminate the intersection between sheets $I$ and $J$, but we do so at the cost of possibly creating a new canceling pair of intersections, here between surface sheets $I$ and $K$. We may then look for another Whitney disk, $W_{(I,K)}$ to eliminate the new intersection. For more detailed definitions and background on this subject, see \cite{Freed}. 

\begin{figure}[ht!]
\centering
\includegraphics[scale=.45]{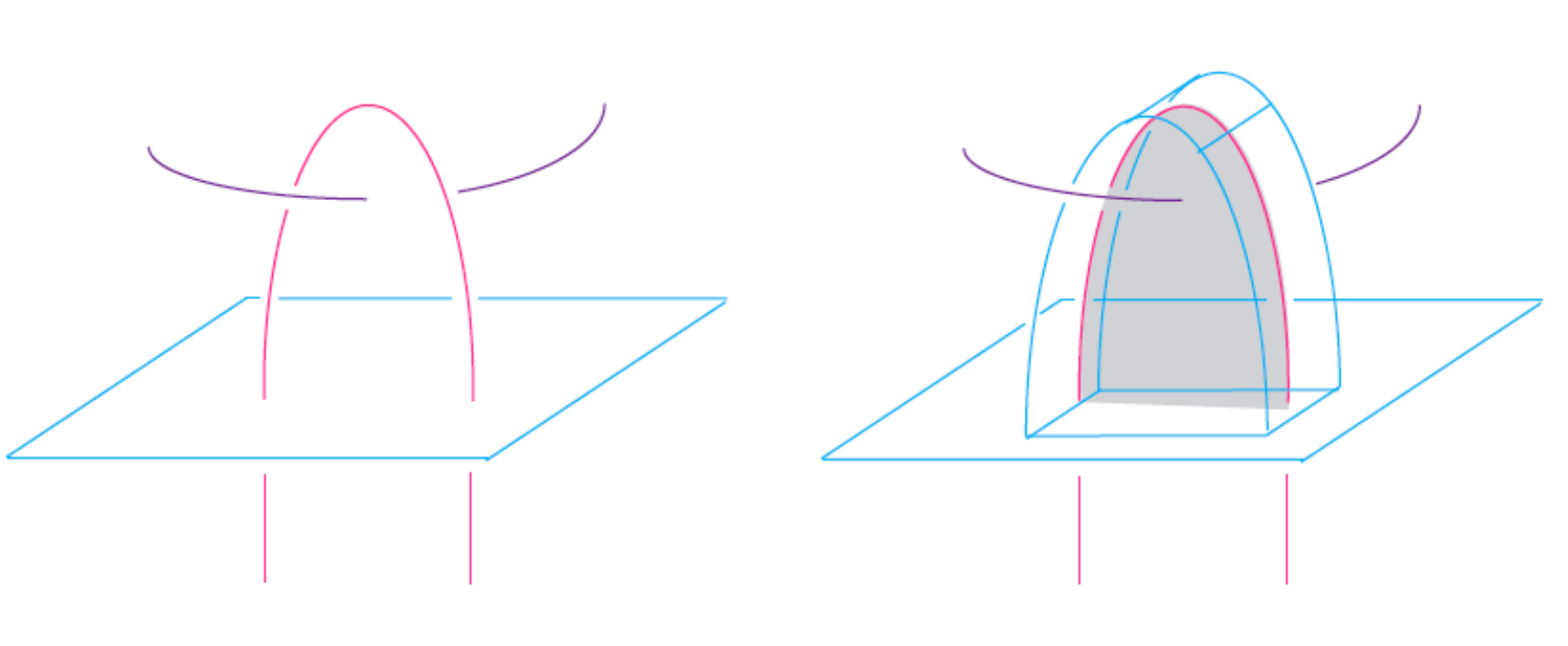}
\put(-97,75){$W_{(I,J)}$}
\put(-140,40){$I$}
\put(-90,20){$J$}
\put(-120,110){$K$}
\put(-200,55){$\longrightarrow$}
\caption{A Whitney disk and Whitney move}
\label{fig:Whit}
\end{figure}

We give the following definition from \cite{Rob}. An example of a Whitney tower is given in figure \ref{fig:Whitney}.

\begin{definition}

\begin{itemize}
\item A \emph{surface of order 0} in a 4-manifold $X$ is a properly immersed surface. A \emph{Whitney tower of order 0} in $X$ is a collection of order 0 surfaces.
\item The \emph{order of a transverse intersection point} between a surface of order $n$ and a surface of order $m$ is $n+m$.
\item A Whitney disk that pairs intersection points of order $n$ is said to be a \emph{Whitney disk of order} $(n+1)$.  
\item For $n \ge 0$, a \emph{Whitney tower of order $(n+1)$} is a Whitney tower $W$ of order $n$ together with Whitney disks pairing all order $n$ intersection points of $W$. The interiors of these top order disks are allowed to intersect each other as well as allowed to intersect lower order surfaces.
\end{itemize}
\end{definition}

\begin{figure}
\centering
\includegraphics[scale=.4]{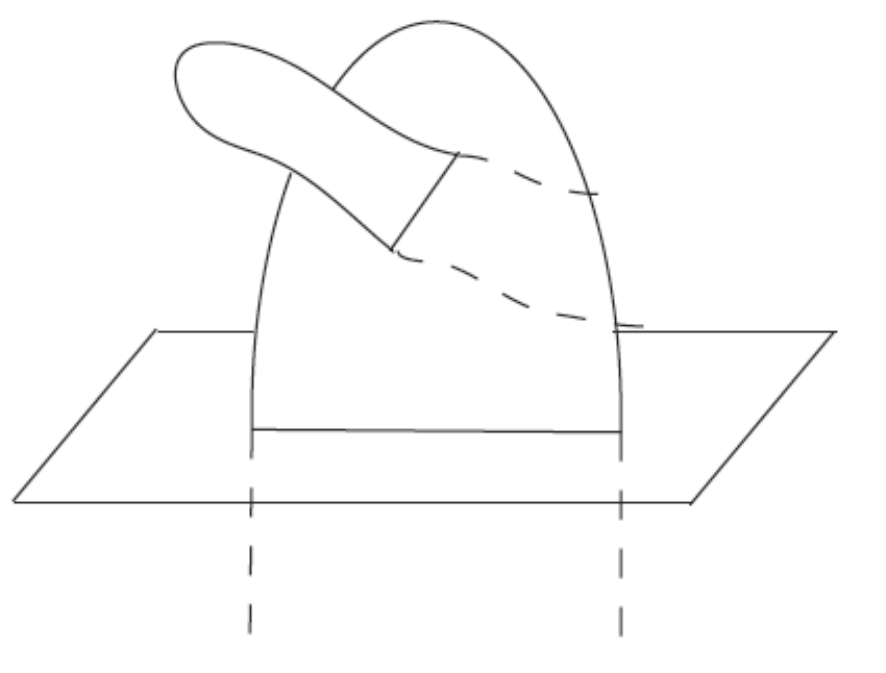}
\caption{A Whitney tower}
\label{fig:Whitney}
\end{figure}

We will consider Whitney towers in the 4-manifold $\B^4$. The sheets in our surfaces will all be disks, and we may then consider the boundaries of these disks in $\bd \B^4 = S^3$ to be links in $S^3$. Schneiderman relates curves bounding gropes to curves supporting Whitney towers in the following theorem \cite{Rob}.

\begin{thm}{\emph{[Schneiderman]\cite{Rob}}}

For any collection of embedded closed curves $\gamma_i$ in the boundary of $\B^4$,
the following are equivalent:
\begin{enumerate}
\item $\{\gamma_i\}$ bound disjoint properly embedded class $n$ gropes $g_i$ in $\B^4$.
\item $\{\gamma_i\}$ bound properly immersed 2-disks $\D_i$ admitting an order $(n-1)$ Whitney tower $W$ in $\B^4$. 
\end{enumerate}
\end{thm}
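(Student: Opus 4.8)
The plan is to establish the two implications separately by a simultaneous induction on the parameter $n$, exploiting the recursive definitions of gropes and Whitney towers together with the fact that $\B^4$ is simply connected, so that every embedded curve in $S^3 = \bd \B^4$ bounds a properly immersed disk. The base case $n = 1$ is immediate: a class $1$ grope is the curve itself, and an order $0$ Whitney tower is merely a collection of properly immersed disks; both conditions therefore hold for every $\{\gamma_i\}$, so nothing is to prove. The inductive step is where the geometry lives, and the two directions are carried out by mutually inverse constructions, namely \emph{grope contraction} and its reverse.

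For the implication $(1) \Rightarrow (2)$ I would use grope contraction. Starting from disjoint class $n$ gropes $g_i$ with bottom surface stages $\Sigma_i$ (so $\bd \Sigma_i = \gamma_i$), each $\Sigma_i$ carries a symplectic basis $\{\alpha_j, \beta_j\}$ along which subgropes of class $a_j$ and $b_j$ are attached, with $a_1 + b_1 = n$ and $a_j + b_j \ge n$. I contract each handle of $\Sigma_i$ by pushing the two dual subgropes attached to $\alpha_j$ and $\beta_j$ together; this replaces $\Sigma_i$ by a properly immersed disk $\D_i$ bounding $\gamma_i$. Since the dual curves $\alpha_j$ and $\beta_j$ meet once on $\Sigma_i$, the contraction creates intersections between the two subgropes occurring in canceling pairs, and the surviving grope material supplies the Whitney disks pairing them. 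The order of a Whitney disk produced from the $j$-th handle is $a_j + b_j - 1 \ge n - 1$, so the disks $\D_i$ support an order $(n-1)$ Whitney tower, as desired.

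For the converse $(2) \Rightarrow (1)$ I would invert this dictionary by a grope-expansion construction. Given properly immersed disks $\D_i$ supporting an order $(n-1)$ Whitney tower $W$, each Whitney disk pairs a canceling pair of intersections; rather than performing the Whitney move, I use the Whitney disk to tube the two intersecting sheets together, trading the canceling pair for a handle whose two dual curves bound surfaces assembled from the paired sheets. Performing this for the lowest-order Whitney disks first rebuilds embedded bottom surface stages $\Sigma_i$ with $\bd \Sigma_i = \gamma_i$, and the higher-order Whitney disks of $W$ reassemble, via the inductive hypothesis applied to the subtowers they support, into the subgropes attached along a symplectic basis of $\Sigma_i$. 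Tracking the order bookkeeping shows that an order $(n-1)$ tower yields branches with $a_j + b_j \ge n$, i.e.\ a grope of class $n$; disjointness of the resulting gropes is arranged by keeping every modification inside disjoint regular neighborhoods.

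The main obstacle will be controlling framings and the combinatorial index data rather than the large-scale geometry. Both contraction and tubing are sensitive to normal framings: a class $n$ grope carries no a priori Whitney framing, whereas each Whitney disk must satisfy the Whitney framing condition, so at every inductive stage I must check that the framings produced by contracting grope stages match the framing required of the Whitney disks, up to introducing only controlled, order-raising interior intersections that do not lower the order count. Equally delicate is matching the recursive indexing: I would make the correspondence precise by attaching to each grope and each Whitney disk its rooted tree and verifying that ``class $n$'' and ``order $(n-1)$'' correspond to the same tree size, and that the branching inequalities $a_j + b_j \ge n$ (with equality forced only on the first branch) are respected under both constructions. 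Once the framing control and the tree matching are pinned down, grope contraction and its reverse are seen to be inverse up to the ambiguities permitted in the definitions, which establishes the equivalence of $(1)$ and $(2)$.
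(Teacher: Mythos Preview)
The paper does not actually prove this theorem; it is quoted as a result of Schneiderman and cited to \cite{Rob} without argument, so there is no proof in the paper to compare your proposal against. Your outline --- grope contraction to pass from gropes to Whitney towers, and a tubing/expansion inverse to go back --- is indeed the strategy of Schneiderman's original paper, and the obstacles you flag (framing control, tree/order bookkeeping) are exactly the technical points that need care there. But as far as this paper is concerned, the statement is treated as a black box imported from the literature, and any proof you supply goes beyond what the paper provides.
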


The following theorem, due to Conant, Schneiderman, and Teichner, relates Whitney towers to the algebraic link invariants used in Theorem \ref{thm:Main} \cite{Survey}.

\begin{thm}{\emph{[Conant-Schneiderman-Teichner]\cite{Survey}}}

A link $L$ bounds a Whitney tower $\mathcal{W}$ of order $n$ if and only if its Milnor invariants, Sato-Levine invariants $(\bmod 2)$, and Arf invariants vanish up to order $n$.
\end{thm}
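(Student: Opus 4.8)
Since the theorem is an \emph{if and only if} across all orders $n$, the plan is to set up the \emph{Whitney tower filtration} together with its associated graded obstruction theory, and then to match the graded pieces to the three families of classical invariants. First I would let $\mathbb{W}_n$ denote the collection of (framed) links bounding an order $n$ Whitney tower in $\B^4$; since an order $(n+1)$ tower is in particular an order $n$ tower, these form a descending filtration, and the object of study is the associated graded quotients $\mathbb{W}_n/\mathbb{W}_{n+1}$. The central tool is the \emph{tree-valued intersection invariant} $\tau_n$, which records, with signs and decorated by trivalent trees, the unpaired intersection points among the top-order Whitney disks of an order $n$ tower $\mathcal{W}$. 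This $\tau_n$ takes values in the abelian group $\mathcal{T}_n$ generated by order $n$ trees subject to the antisymmetry and Jacobi (IHX) relations, and the first technical step is to verify that $\tau_n(\mathcal{W})$ is well defined, i.e.\ independent of the choices of Whitney disks and of the bounding tower modulo the image of $\mathbb{W}_{n+1}$ and the AS and IHX relations. The key geometric input is that the obstruction to raising the order of a tower from $n$ to $n+1$ is \emph{exactly} $\tau_n(\mathcal{W})$, so that $L\in\mathbb{W}_{n+1}$ precisely when $\tau_n$ can be arranged to vanish.

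With this in place the forward direction reduces to exhibiting each classical invariant as a function of the intersection tree. I would show that the \emph{Milnor invariants} of order $n$ are recovered from the part of $\tau_n$ supported on trees with distinct leaf labels, via the projection of $\mathcal{T}_n$ onto the target group of the total Milnor invariant; that the \emph{Sato-Levine invariants} are the order $2$ contributions coming from trees with a single repeated pair of leaves, detected by the even (framing) part; and that the \emph{Arf invariants} account for the residual $\Z_2$ information carried by trees with repeated leaves that cannot be resolved integrally. Granting that each of these is a well-defined function of the intersection invariant, a link bounding an order $n$ tower has vanishing order $k$ invariant $\tau_k$ for every $k<n$, and hence all three families of classical invariants vanish up to order $n$.

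The reverse direction is the substantial one, and I would argue it by induction on the order. Assuming $L$ bounds an order $k$ tower and that all invariants up to order $k$ vanish, the goal is to promote the tower to order $(k+1)$, which by the first paragraph is equivalent to showing that $\tau_k(L)$ vanishes in the associated graded $\mathbb{W}_k/\mathbb{W}_{k+1}$. This rests on two complementary facts. The \emph{realization} statement asserts that $\tau_k\colon \mathbb{W}_k \twoheadrightarrow \mathcal{T}_k$ is onto (every tree is geometrically realized by a link bounding an order $k$ tower), so that $\mathbb{W}_k/\mathbb{W}_{k+1}$ is a quotient of $\mathcal{T}_k$ cut out by the relations imposed by the framed geometry. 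The \emph{detection} statement asserts that the combined map given by the total Milnor invariant together with the higher-order Sato-Levine and higher-order Arf invariants is injective on this quotient. Combining the two, vanishing of the classical invariants forces $[L]=0$ in $\mathbb{W}_k/\mathbb{W}_{k+1}$, hence $L\in\mathbb{W}_{k+1}$, and the induction advances.

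The main obstacle is the precise computation of the associated graded group $\mathbb{W}_k/\mathbb{W}_{k+1}$, and in particular the identification of its $\Z_2$-summands. The delicate point is to separate the integral information detected by Milnor's invariants (trees with non-repeating leaves) from the \emph{even} framing information detected by the Sato-Levine invariants and from the genuinely new $\Z_2$ information detected by the higher-order Arf invariants on trees with repeated labels. Establishing that the higher-order Arf invariants are both well defined and \emph{nontrivial}, so that they are exactly what is needed to complete the detection statement, is the deepest input; I would expect it to rely on the \emph{universal quadratic form} structure carried by the tree groups (whose associated bilinear form encodes framing) together with grope-duality cancellation arguments showing that the remaining generators are realized precisely by links of nonzero higher-order Arf invariant.
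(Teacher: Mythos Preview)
The paper does not contain a proof of this statement: it is quoted verbatim as a result of Conant, Schneiderman, and Teichner and cited to \cite{Survey}, then immediately combined with Schneiderman's grope/Whitney-tower equivalence and with Theorem~\ref{thm:Main} to deduce Corollary~\ref{cor:CST}. So there is no ``paper's own proof'' to compare your proposal against; the author treats this theorem as a black box.

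That said, your outline is a fair high-level summary of the Conant--Schneiderman--Teichner machinery (the filtration $\mathbb{W}_n$, the tree-valued intersection invariant $\tau_n$ modulo AS/IHX, realization surjectivity, and detection by Milnor/Sato-Levine/Arf data). One caution: you describe the final step as ``establishing that the higher-order Arf invariants are both well defined and \emph{nontrivial}.'' In the CST framework the higher-order Arf invariants are well defined on the appropriate quotient, but their \emph{nontriviality} for orders beyond the classical one is the content of the Higher-Order Arf Invariant Conjecture and is not known; the classification of $\mathbb{W}_n/\mathbb{W}_{n+1}$ is stated modulo this indeterminacy. For the application in this paper only the order $n=2$ case is needed, where the relevant Arf-type invariants are the classical Arf invariants of the components and no conjectural input is required, so this subtlety does not affect Corollary~\ref{cor:CST}.
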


We may then combine these results with Theorem \ref{thm:Main} to obtain the following result.

\begin{corollary}\label{cor:CST}
For an ordered, oriented, $m$-component link $L$, the following are equivalent.
\begin{enumerate}
\item $L$ is 0-solvable.
\item $L$ bounds disjoint, properly embedded gropes of class 2 in $\B^4$.
\item $L$ bounds properly immersed disks admitting an order 2 Whitney tower in $\B^4$.
\end{enumerate}
\end{corollary}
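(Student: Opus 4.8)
The plan is to obtain Corollary~\ref{cor:CST} as a formal consequence of Corollary~\ref{cor:0solvable} together with the two cited theorems, so that no genuinely new geometry is needed --- the content is entirely in matching up invariants. I would prove $(2)\Leftrightarrow(3)$ and $(1)\Leftrightarrow(3)$ separately.

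For $(2)\Leftrightarrow(3)$, apply Schneiderman's theorem to the collection of components $\gamma_i=K_i$ of $L$: it identifies ``$L$ bounds disjoint, properly embedded class~$2$ gropes in $\B^4$'' with ``$L$ bounds properly immersed $2$-disks admitting the corresponding Whitney tower in $\B^4$.'' This step is a pure translation between the grope picture and the Whitney-tower picture and introduces no extra content, once one has fixed the ``order'' of the Whitney tower produced so that it matches the order named in statement~(3).

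For $(1)\Leftrightarrow(3)$, first invoke Corollary~\ref{cor:0solvable}: $L$ is $0$-solvable if and only if Arf$(K_i)=0$ for all $i$, $\bar{\mu}_L(ijk)=0$ for all $i,j,k$, and $\bar{\mu}_L(iijj)\equiv 0 \pmod 2$ for all $i,j$ (the vanishing of the pairwise linking numbers being already built into $0$-solvability). Then invoke the Conant--Schneiderman--Teichner theorem: $L$ bounds the relevant Whitney tower if and only if its Milnor invariants, Sato--Levine invariants, and Arf invariants vanish up to the relevant order. What remains is to verify that ``vanishing up to that order'' is \emph{exactly} the list produced by Corollary~\ref{cor:0solvable} --- that the Milnor invariants in range are precisely the linking numbers $\bar{\mu}_L(ij)$ and the triple linking numbers $\bar{\mu}_L(ijk)$, that the relevant Sato--Levine invariants are the mod-$2$ reductions of $\bar{\mu}_L(iijj)$, that the Arf invariants in question are the Arf$(K_i)$ of the components, and that nothing else (e.g.\ a four-index Milnor invariant $\bar{\mu}_L(ijkl)$) enters. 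Granting this, $(1)\Leftrightarrow(3)$, and combining with the previous paragraph establishes the corollary.

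The one delicate point --- and the only place the argument could go wrong --- is this final bookkeeping: pinning down the grading conventions so that ``class~$2$ grope'' and ``order-$2$ Whitney tower'' line up on the nose with the three-invariant description of $0$-solvability. This is not deep, but it requires squaring the indexing conventions of \cite{Rob} and \cite{Survey} with the notion of $0$-solvability used in this paper, and is where I would spend the most care.
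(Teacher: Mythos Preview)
Your proposal is correct and takes essentially the same approach as the paper: the paper gives no detailed proof of Corollary~\ref{cor:CST} at all, simply stating that one ``combine[s] these results'' (Schneiderman's theorem and the Conant--Schneiderman--Teichner theorem) with Theorem~\ref{thm:Main}/Corollary~\ref{cor:0solvable}. Your write-up is in fact more careful than the paper's, since you flag the indexing issue --- note that as stated in the paper, Schneiderman's theorem matches class~$n$ gropes with order~$(n-1)$ Whitney towers, so lining up ``class~$2$'' with ``order~$2$'' genuinely requires reconciling the conventions of \cite{Rob} and \cite{Survey}, exactly as you say.
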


\section{Acknowledgements}

The author would like to S. Harvey for the many helpful conversations and encouragement, J. Conant for pointing out the connection to Whitney towers and gropes, and C. Davis and C. Otto for helpful feedback.

\bibliographystyle{plain}

\end{document}